\documentclass[11pt]{article}
\usepackage{amssymb}
\usepackage{graphicx}
\usepackage{setspace}
  \usepackage{paralist}
  \usepackage{longtable}
   \usepackage{multirow}
    \usepackage{rotating}
\usepackage{marginnote}
\usepackage{amsmath, amsthm, amssymb}
\usepackage{authblk}
\usepackage{graphicx}
\usepackage{float}
\usepackage{hyperref}
\usepackage[margin=1in]{geometry}
\usepackage{comment}
\usepackage{color}
\specialcomment{details}{}

\allowdisplaybreaks[4]
\numberwithin{equation}{section}
\setlength{\textwidth}{6.5truein} \setlength{\textheight}{9.3truein}
\setlength{\oddsidemargin}{-0.0in}
\setlength{\evensidemargin}{-0.0in}
\setlength{\topmargin}{-0.4truein}
\date{}

\excludecomment{details}
%\includecomment{details}

\newtheorem{theorem}{Theorem}[section]

\newtheorem{lemma}[theorem]{Lemma}
\newtheorem{corollary}[theorem]{Corollary}

\title{Singular solutions of linear problems\\ with fractional Laplacian}

\author{Congming Li$^{1,2}$
\ \ \ Chenkai Liu$^1$\ \ \ Zhigang Wu$^3$\ \ \  Hao Xu$^2$}

\begin{document}

\maketitle
\renewcommand{\thefootnote}{\fnsymbol{footnote}}

\footnotetext{1. School of Mathematical Sciences, MOE-LSC, Shanghai Jiao Tong University, China;}
\footnotetext{2. Department of Applied Mathematics, University of Colorado Boulder, USA;}
\footnotetext{3. Department of Applied Mathematics, Donghua University, China.}
\footnotetext{Partially supported by NSFC 11571233, NSF of Shanghai 16ZR1402100.}
\renewcommand{\thefootnote}{\arabic{footnote}}

\maketitle
%\date{}

\noindent{\bf{Abstract}}: In this paper, we study singular solutions of linear problems with fractional Laplacian.
First, we establish B\^{o}cher type theorems on a punctured ball via distributional approach.  Then, we develop a few interesting maximum principles on a punctured ball. Our distributional approach only requires the basic $L_{{\rm loc}}^1$-integrability. We also introduce several simple and useful lemmas, which enable us to unify the treatments for both Laplacian and fractional Laplacian. These theorems, lemmas and the methods introduced here can be adapted and applied in other situations.

\noindent{\bf{Keywords}}: fractional Laplacian, singular solution, B\^{o}cher theorem, maximum principle, regularities.

\noindent{\bf {MSC}}:   35B09, % positive solutions
                        35B50, % maximum principles
                        35S05.  % pseudodifferential operators.

\pagestyle{myheadings} \thispagestyle{plain} %\markboth{Congming Li \& Chenkai Liu \& Zhigang Wu \& Hao Xu}{}

\bigbreak

\section{Introduction}

\quad\!\!\quad The well-known B\^{o}cher theorem \cite{axler,bocher,raynor} for the nonnegative harmonic function states:\\
{\textbf{B\^{o}cher\ theorem}:
{\em If $u(x)\in L_{{\rm loc}}^1(B_1\backslash\{0\})$ and $u(x)$ is nonnegative and harmonic on $B_1\backslash\{0\}$, then there is a constant $a\geq0$
such that  $u(x)\in L_{{\rm loc}}^1(B_1)$ and
\begin{equation}\label{0.1}
-\Delta u(x)=a\delta_0(x),\ {\rm on}\ B_1
\end{equation}
where $\delta_0$ is the Delta distribution concentrated at the origin.}

The original proof is given by B\^{o}cher \cite{bocher} based on some non-obvious properties of the level surfaces of a
harmonic function.  Later, it was proved by Kellogg \cite{kellogg} using series expansions for spherical harmonics and
by Helms \cite{helms} via the potential theory and the theory of super-harmonic
functions. Recently, Axler \cite{axler} gave a simpler proof through the maximum principle, Harnack inequality and the solvability of the Dirichlet problem in a unit ball.

Br\'{e}zis-Lions \cite{brezis} obtained another B\^{o}cher type theorem:
\begin{theorem}
Let $u(x)\in L_{{\rm loc}}^1(B_1\backslash \{0\})$ be such that  $u(x)\geq0$ in $B_1$,
$\Delta u(x)\in L_{{\rm loc}}^1(B_1\backslash \{0\})$ and
\begin{equation}\label{0.1(000)}
-\Delta u(x)+Mu(x)\geq f(x),\ \  {\rm in}\ B_1,\ {\rm with}\ f\in L_{{\rm loc}}^1(B_1),\ M>0.
\end{equation}
Then $u(x)\in L_{{\rm loc}}^1(B_1)$ and there exists $\varphi(x)\in L_{{\rm loc}}^1(B_1)$ and $a\geq0$ such that
\begin{equation}\label{0.1(0000)}
-\Delta u(x)=\varphi(x)+a\delta_0,\ {\rm in}\ \mathcal{D}'(B_1).
\end{equation}
\end{theorem}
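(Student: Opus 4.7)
I would follow the standard two-stage B\^{o}cher-type strategy: first establish that $u$ is locally integrable across the origin, then identify the singular part of $-\Delta u$ at $0$.

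\emph{Stage 1 ($u \in L_{{\rm loc}}^1(B_1)$).} Rewrite the hypothesis as $-\Delta u \geq f - Mu$ on $B_1 \setminus \{0\}$, so that the classical $-\Delta u$ is bounded below by an $L_{{\rm loc}}^1(B_1 \setminus \{0\})$ function. I extract integrability via a spherical-average analysis. Let $\bar u(r)$ denote the mean of $u$ on $\partial B_r$; multiplying the inequality by radial test functions and integrating by parts yields a differential inequality for $\bar u$, which integrates on sub-annuli $[r, R] \subset (0, 1/2)$ to a monotonicity-type control on $r^{n-1}\bar u'(r)$. Combined with $u \geq 0$ and the $L_{{\rm loc}}^1$ hypothesis on $f$ and on $u$ away from $0$, this forces $\bar u(r) \leq C\,r^{2-n}$ (logarithmic when $n = 2$) in a neighborhood of the origin, whence $\int_0^{1/2} \bar u(r)\,r^{n-1}\,dr < \infty$, i.e.\ $u \in L_{{\rm loc}}^1(B_1)$. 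Equivalently, one can compare $u$ with the Bessel / Yukawa kernel $\Gamma$ solving $(-\Delta + M)\Gamma = \delta_0$ on $\mathbb{R}^n$: pairing the inequality with $\zeta\Gamma$ for a cutoff $\zeta \in C_c^\infty(B_1)$ equal to $1$ on $B_{1/2}$ and integrating by parts on $B_{3/4} \setminus B_\varepsilon$ produces, after using $(-\Delta + M)\Gamma = 0$ on the punctured ball, the weighted estimate $\int_{B_{1/2}} u\,\Gamma\,dx \leq C$ uniformly in $\varepsilon$.

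\emph{Stage 2 (decomposition).} Now the distribution $T := -\Delta u + Mu - f$ is well-defined on $B_1$, and its restriction to $B_1 \setminus \{0\}$ coincides with the nonnegative $L_{{\rm loc}}^1$ function $g := -\Delta u + Mu - f$. A second application of the weighted estimate in Stage 1, this time to bound $\int_{B_{1/2}} g\,\Gamma\,dx$, shows $g \in L_{{\rm loc}}^1(B_1)$. Consequently $T - g$ is a distribution on $B_1$ supported at $\{0\}$, hence a finite linear combination of $\delta_0$ and its derivatives. To rule out derivatives of $\delta_0$, I pair $T$ against the family $\psi_\varepsilon(x) := \psi_0(x/\varepsilon)$ with $\psi_0 \in C_c^\infty(\mathbb{R}^n)$ nonnegative and $\psi_0(0) > 0$; the $L_{{\rm loc}}^1(B_1)$ bounds on $u, g, f$ just obtained let the inequality $\langle T, \psi_\varepsilon\rangle \geq 0$ be passed across the origin, while $\langle \partial^\alpha \delta_0, \psi_\varepsilon\rangle$ scales like $\varepsilon^{-|\alpha|}$, forcing the coefficients of all higher-order derivatives of $\delta_0$ to vanish and the coefficient of $\delta_0$ itself to be nonnegative. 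Thus $T - g = a\delta_0$ with $a \geq 0$, and setting $\varphi := g + f - Mu \in L_{{\rm loc}}^1(B_1)$ yields $-\Delta u = \varphi + a\delta_0$ in $\mathcal{D}'(B_1)$.

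\emph{Main obstacle.} The heart of the argument is Stage 1, namely converting the one-sided differential inequality into quantitative control on $\bar u(r)$ as $r \to 0$. The delicate point is that, on the punctured ball, $u$ is only assumed to be $L_{{\rm loc}}^1$, so the ODE-type analysis of $\bar u$ must be justified distributionally, e.g.\ by mollification or by first integrating over sub-annuli and then passing to the limit, and the scaling of the singularity of $\Gamma$ (or equivalently the weight $r^{2-n}$) against the spherical averages of $u$ and $\partial_\nu u$ must be tracked carefully. Once pointwise control on $\bar u(r)$ is secured, Stage 2 reduces to standard distribution-theoretic manipulations.
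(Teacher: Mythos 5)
The paper does not reprove this Br\'ezis--Lions theorem; it is cited as prior work, and the paper only remarks that the original proof ``rel[ies] on the assumption $f\in L^1_{\rm loc}(B_1)$ and the sphere average method.'' Your Stage~1 via spherical means is therefore the classical route, and it is genuinely different from the method the paper uses to prove its own (differently hypothesized) Theorem~\ref{l 0.0}, which avoids $\bar u(r)$ altogether and instead uses the Riesz--Markov--Kakutani representation, the engineered mollified subsolutions $\psi_\epsilon^\delta$, $W^{1,p}$ regularity for the elliptic operator, and a direct comparison with $\Phi$.

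That said, there are concrete gaps. First, the ``equivalent'' Bessel-kernel estimate $\int_{B_{1/2}} u\,\Gamma\,dx \leq C$ is not equivalent to local integrability and is in fact \emph{false} when $n \geq 4$: the conclusion allows $a>0$, in which case $u \sim a\Phi \sim a|x|^{2-n}$ near $0$; since $\Gamma$ has the same leading singularity, $u\Gamma \sim |x|^{2(2-n)}$ and $\int_{B_{1/2}} |x|^{2(2-n)}\,dx \sim \int_0 r^{3-n}\,dr = +\infty$ for $n \geq 4$. The correct Stage~1 target is the \emph{unweighted} mass $\int_0 \bar u(r)\,r^{n-1}\,dr$, finite because $\bar u(r)\lesssim r^{2-n}$ makes the integrand $\lesssim r$. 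The same matching of singularities makes the boundary terms $u\,\partial_\nu\Gamma - \Gamma\,\partial_\nu u$ on $\partial B_\varepsilon$ merely $O(1)$, so the integration by parts you propose to bound $\int g\,\Gamma\,dx$ does not close without a separate cancellation analysis. Second, in Stage~2 a single radially symmetric $\psi_0$ has $\partial^\alpha\psi_0(0)=0$ for all odd $|\alpha|$, so your scaling family cannot detect (hence cannot rule out) $c_\alpha\partial^\alpha\delta_0$ with $|\alpha|=1$; you need non-radial nonnegative $\psi_0$ with $\nabla\psi_0(0)\neq 0$, or argue as in Step~2 of the paper's proof of Theorem~\ref{l 0.0} by exploiting that $\nabla\Phi$ has a sign-changing, non-$L^{n/(n-1)}_{\rm loc}$ singularity that contradicts $u\geq 0$ together with the $L^{n/(n-1)}_{\rm loc}$ regularity of the remainder. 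Finally, the nonnegativity of $a$ does not follow from pairing with nonnegative $\psi_\epsilon$: since $-\Delta\psi_\epsilon$ changes sign and $\int u(-\Delta\psi_\epsilon)$ is only $O(1)$, you cannot conclude $\langle -\Delta u + Mu - f,\psi_\epsilon\rangle\geq 0$ in the limit; the sign must be established separately, e.g.\ by the paper's Step~3 device of writing $u = v + a\Phi + H$ with $H$ harmonic and averaging over $B_\delta$ to derive a contradiction when $a<0$.
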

They rely on the assumption $f(x)\in L_{{\rm loc}}^1(B_1)$ and the sphere average method.

 In the last few decades, problems of fractional Laplacian type have attracted a lot of attention from scientists in both mathematical and physical sciences. However, due to the nonlocal nature of the fractional Laplacian, new methods are needed to derive B\^{o}cher type theorems for fractional Laplacian. Some simple cases are discussed in \cite{li0}. For a complete study, some unexpected difficulties arise. Quite a few subtle and interesting basic estimates are introduced to deal with these. These basic estimates are expected to have broader applications.
 The following are the generalized version of B\^{o}cher theorems for both the Laplacian and fractional Laplacian cases:

  \begin{theorem}[{\rm B\^{o}cher\ theorem\ for\ Laplacian}] \label{l 0.0}
Let $B_1\subset\mathbb{R}^n$ with $n\geq2$. Assume that $u(x)\in L_{{\rm loc}}^1(B_1\backslash \{0\})$ be a nonnegative function satisfying
\begin{equation}\label{0.00}
-\Delta u(x)+\vec b(x)\cdot\nabla u(x) + c(x)u(x)\geq0\ \ {\rm in}\ \mathcal{D}'(B_1\backslash \{0\}),
\end{equation}
where $\|\vec b(x)\|_{C^1(B_1)}+\|c(x)\|_{L^\infty(B_1)}\leq M$ for some constant $M$, then $u(x)\in L_{{\rm loc}}^1(B_1)$ and

\begin{equation}\label{0.000}
-\Delta u(x)+\vec b(x)\cdot\nabla u(x) + c(x)u(x)=\mu+a \delta_0(x)\ \ {\rm in}\ \mathcal{D}'(B_1),
\end{equation}
for a constant $a\geq0$ and a nonnegative Radon measure $\mu$ on $B_1$ satisfying $\mu(\{0\})=0$.
\end{theorem}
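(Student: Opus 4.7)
My plan has three steps: (i) extract a nonnegative Radon measure $\nu$ on the punctured ball from the distributional inequality; (ii) upgrade $u$ to $L^1_{\mathrm{loc}}(B_1)$; (iii) pass to $\mathcal{D}'(B_1)$ via a cutoff test function and identify the contribution at $0$ as a single atom. Step (i) is immediate from Schwartz's theorem on nonnegative distributions applied to $Lu := -\Delta u + \vec b \cdot \nabla u + cu$ on $B_1 \setminus \{0\}$, yielding a nonnegative Radon measure $\nu$ on $B_1 \setminus \{0\}$ with $Lu = \nu$ there.

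For step (ii), choose $r_0 \in (0,1)$ small enough (depending on $M$) so that $L$ has a nonnegative Green's function $G_L(x,y)$ on $B_{r_0}$ with $G_L(x,0) \asymp |x|^{2-n}$ ($n \ge 3$) or $\asymp \log(1/|x|)$ ($n = 2$); this is standard via a small-ball perturbation of $-\Delta$. Pick $r \in (r_0/2, r_0)$ with $u \in L^1(\partial B_r)$ (possible by Fubini since $u \in L^1_{\mathrm{loc}}(B_1 \setminus \{0\})$) and let $w$ solve $Lw = 0$ in $B_r$, $w = u$ on $\partial B_r$. The goal is the pointwise bound $u - w \le C\, G_L(\cdot,0)$ on $B_r \setminus \{0\}$, which together with $G_L(\cdot,0) \in L^1(B_{r_0})$ gives $u \in L^1_{\mathrm{loc}}(B_1)$. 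To prove this bound I would adapt the Br\'ezis--Lions spherical-average argument: for the pure Laplacian, the ODE $(r^{n-1}\bar u'(r))' \le 0$ together with nonnegativity forces $\bar u(r) \le C r^{2-n}$; for our $L$ one integrates (\ref{0.00}) against spherical cutoffs, uses the $C^1$-regularity of $\vec b$ and the $L^\infty$-bound on $c$ to absorb the lower-order contributions into the leading sphere-average ODE up to a Gronwall-controllable error, and recovers the same $r^{2-n}$ bound on $\bar u(r)$ provided $r_0$ is sufficiently small.

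For step (iii), fix $\varphi \in C_c^\infty(B_1)$ and a cutoff $\chi_\epsilon \in C^\infty(\mathbb{R}^n)$ with $\chi_\epsilon = 0$ on $B_\epsilon$, $\chi_\epsilon = 1$ off $B_{2\epsilon}$, and $|\nabla^k \chi_\epsilon| \le C\epsilon^{-k}$. Since $\varphi \chi_\epsilon \in C_c^\infty(B_1 \setminus \{0\})$, pairing $Lu = \nu$ with $\varphi \chi_\epsilon$ and integrating by parts gives
\begin{equation*}
\int_{B_1} \varphi \chi_\epsilon\, d\nu \;=\; \int_{B_1} u\,\chi_\epsilon L^{\ast}\varphi\,dx + \int_{B_1} u\, R_\epsilon(\varphi)\,dx,
\end{equation*}
where $L^{\ast}$ is the formal adjoint and $R_\epsilon(\varphi) = -2\nabla\varphi \cdot \nabla\chi_\epsilon - \varphi\Delta\chi_\epsilon - \varphi\,\vec b \cdot \nabla\chi_\epsilon$. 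As $\epsilon \to 0$ the left side tends to $\int \varphi\,d\bar\nu$ (with $\bar\nu$ the zero-extension of $\nu$ across $\{0\}$) and dominated convergence sends the first right-hand integral to $\langle Lu,\varphi\rangle_{\mathcal{D}'(B_1)}$, so the commutator term $\int u R_\epsilon(\varphi)$ has a limit $T(\varphi)$ that is a distribution supported at $\{0\}$. The pointwise bound $u \le w + C\,G_L(\cdot,0)$ from (ii) makes $\epsilon^{-2}\int_{B_{2\epsilon}\setminus B_\epsilon} u$ bounded in $\epsilon$, forcing $T$ to have order $0$, hence $T = a\delta_0$ for some $a \in \mathbb{R}$; testing against nonnegative $\varphi$ gives $a \ge 0$. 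Setting $\mu := \bar\nu$ (which satisfies $\mu(\{0\}) = 0$ by construction) yields (\ref{0.000}). The main obstacle throughout is the sphere-average estimate of step (ii): adapting the Br\'ezis--Lions ODE method to $L$ requires carefully controlling the spherical averages of $\vec b \cdot \nabla u$ and $cu$ when $u$ is only assumed to be in $L^1_{\mathrm{loc}}(B_1 \setminus \{0\})$, and this is presumably where the ``simple and useful lemmas'' advertised in the introduction play the decisive role, particularly since the same lemmas must also serve the fractional Laplacian case treated later.
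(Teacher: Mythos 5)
Your overall outline (Riesz representation on the punctured ball, upgrade to $L^1_{\rm loc}(B_1)$, then pass to $\mathcal{D}'(B_1)$ by cutoffs) tracks the paper, but the two middle steps contain genuine gaps, and the final sign claim for $a$ is incorrect as stated.

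\textbf{Step (ii) — the pointwise bound is not established.} You want $u \le w + C\,G_L(\cdot,0)$ a.e.\ on a small ball, and you propose to get it by adapting the Br\'ezis--Lions sphere-average argument. That argument produces an integral bound on the sphere averages $\bar u(r)$; it does not produce a pointwise bound, and the passage from $\bar u(r) \lesssim r^{2-n}$ to $u(x)\lesssim |x|^{2-n}$ requires a Harnack-type inequality for supersolutions in $L^1_{\rm loc}$. That is not available in this generality (weak Harnack typically requires at least $W^{1,2}_{\rm loc}$). Moreover, the sphere-average ODE is not closed in the presence of $\vec b\cdot\nabla u$, since $\fint_{\partial B_r}\vec b\cdot\nabla u$ is not a function of $\bar u(r)$; the ``Gronwall-controllable error'' is exactly the part you have not supplied. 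The paper avoids pointwise estimates entirely: it builds the explicit test family $\psi_\epsilon^\delta = J_\delta\max\{0,\varphi_\epsilon\}$ with $\varphi_\epsilon(|x|)=1+\sigma|x|^2-(\epsilon/|x|)^\alpha$ (for $n\ge 3$), uses the mollified max-lemma (Corollary \ref{lckcor3}) to see $\psi_\epsilon^\delta$ is a subsolution of the adjoint equation, tests (\ref{0.5}) by $\eta\psi_\epsilon^\delta$, and lets $\delta\to 0$, then $\epsilon\to0$ to conclude $u\in L^1_{\rm loc}(B_1)$ and $\mu(B_{r_0/2})<\infty$. This is considerably more robust than a pointwise comparison.

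\textbf{Step (iii) — order of the atomic piece, and the sign of $a$.} Your commutator-limit argument, once the pointwise bound is granted, would indeed kill the derivative contribution (by Taylor-expanding $\varphi$ around $0$ and using $\int_{B_{2\epsilon}\setminus B_\epsilon}u=O(\epsilon^2)$), but since that bound is in doubt, so is this step. The paper instead works with only $u\in L^1_{\rm loc}$: the Taylor subtraction $\psi=\varphi-(\varphi(0)+\nabla\varphi(0)\cdot x)\eta$ shows a priori that the singular contribution has the form $a\delta_0+\vec d\cdot\nabla\delta_0$, and then $\vec d=0$ is proved separately by a $W^{1,p}$ regularity lemma (Lemma \ref{lckest2}) plus the observation that $(\vec d\cdot\nabla\Phi)^-\notin L^{n/(n-1)}$ when $\vec d\ne 0$, contradicting $u\ge0$. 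More seriously, your concluding sentence ``testing against nonnegative $\varphi$ gives $a\ge0$'' is wrong: the nonnegativity $Lu\ge0$ is only known in $\mathcal{D}'(B_1\setminus\{0\})$, so it gives you no sign information about $\langle Lu,\varphi\rangle$ for $\varphi$ whose support meets $0$, and the atom $a\delta_0$ is precisely the part not controlled by that hypothesis. The paper spends an entire step on this: subtract the Newtonian potential of the measure part to reduce to a smooth $H$, then show that if $a<0$ the $\delta$-ball average $\delta^{-2}\int_{B_\delta}u$ would eventually be negative, contradicting $u\ge0$. You need an argument of this kind (or an equivalent one) for $a\ge0$.

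\end{document}
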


\begin{theorem}[{\rm B\^{o}cher\ theorem\ for\ fractional\ Laplacian}]\label{l 0.1}
Let $B_1\subset\mathbb{R}^n$ with $n\geq2$. Assume that $u(x)\in \mathcal{L}_{2s}$ with $s\in(\frac{1}{2},1)$  be a nonnegative function satisfying
\begin{equation}\label{0.0000}
(-\Delta)^s u(x)+\vec b(x)\cdot\nabla u(x) + c(x)u(x)\geq0\ \ {\rm in}\ \ \mathcal{D}'(B_1\backslash \{0\}),
\end{equation}
where $\|\vec b(x)\|_{C^1(B_1)}+\|c(x)\|_{L^\infty(B_1)}\leq M$ for some constant $M$,  then

\begin{equation}\label{eq111}
(-\Delta)^s u(x)+\vec b(x)\cdot\nabla u(x) + c(x)u(x)=\mu+a\delta_0(x)\ \ {\rm in}\ \mathcal{D}'(B_1),
\end{equation}
for a constant $a\geq0$ and a nonnegative Radon measure $\mu$ on $B_1$ satisfying $\mu(\{0\})=0$.

Furthermore, the theorem holds for $s\in(0,1)$ when $\vec{b}(x)\equiv0$ in (\ref{0.0000}).
\end{theorem}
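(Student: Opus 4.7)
The plan is to adapt the Br\'ezis--Lions scheme to the nonlocal operator $(-\Delta)^s$ and the first-order drift, carrying out three main steps: (i) prove the a priori integrability $u\in L^1_{\mathrm{loc}}(B_1)$ so that the distributional pairing makes sense on all of $B_1$; (ii) extend the nonnegativity of $(-\Delta)^s u+\vec b\cdot\nabla u+cu$ from $B_1\setminus\{0\}$ to $B_1$; (iii) invoke the classical fact that a nonnegative distribution is a Radon measure and then peel off the atom at $0$.

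For step (i) I would test the inequality \eqref{0.0000} against a family of nonnegative functions $\psi_\varepsilon\in C_c^\infty(B_1\setminus\{0\})$ obtained by multiplying a fixed $\zeta\in C_c^\infty(B_1)$ (with $\zeta\equiv 1$ on $B_{1/2}$) by a radial cut-off $\eta_\varepsilon$ that vanishes on $B_\varepsilon$ and equals $1$ outside $B_{2\varepsilon}$. After moving the drift off $u$ via integration by parts (legitimate because $\vec b\in C^1$ and the test function is compactly supported in $B_1\setminus\{0\}$, where by the standing hypothesis $s>\tfrac12$ the gradient of $u$ can be handled distributionally), one obtains
\begin{equation*}
\int u\Bigl((-\Delta)^s\psi_\varepsilon-\operatorname{div}(\vec b\,\psi_\varepsilon)+c\psi_\varepsilon\Bigr)\,dx\ \geq\ 0.
\end{equation*}
Because $u\ge 0$, and the operator applied to $\psi_\varepsilon$ converges away from $0$ to the bounded function $(-\Delta)^s\zeta-\operatorname{div}(\vec b\,\zeta)+c\zeta$, a Fatou/dominated convergence argument yields the bound $\int_{B_{1/2}}u\,dx<\infty$. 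Control of the nonlocal tail rests on the standing hypothesis $u\in\mathcal{L}_{2s}$ combined with a commutator estimate for $(-\Delta)^s(\zeta\eta_\varepsilon)$.

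With $u\in L^1_{\mathrm{loc}}(B_1)$ in hand, the pairing
\begin{equation*}
\langle T,\varphi\rangle:=\int u\,(-\Delta)^s\varphi\,dx-\int u\,\operatorname{div}(\vec b\,\varphi)\,dx+\int cu\,\varphi\,dx,\qquad\varphi\in C_c^\infty(B_1),
\end{equation*}
defines a distribution on $B_1$ that restricts on $B_1\setminus\{0\}$ to the left-hand side of \eqref{0.0000}. For step (ii) any nonnegative $\varphi\in C_c^\infty(B_1)$ is approximated by $\varphi\eta_\varepsilon\in C_c^\infty(B_1\setminus\{0\})$; the inequality on the punctured ball gives $\langle T,\varphi\eta_\varepsilon\rangle\ge 0$, and the same nonlocal commutator estimate used above shows $\langle T,\varphi\eta_\varepsilon\rangle\to\langle T,\varphi\rangle$, so $T\ge 0$ as a distribution on $B_1$. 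By the classical fact that a nonnegative distribution is a nonnegative Radon measure, $T$ equals some nonnegative Radon measure $\nu$ on $B_1$; setting $a:=\nu(\{0\})\ge 0$ and $\mu:=\nu-a\delta_0$ yields \eqref{eq111} with $\mu(\{0\})=0$. The ``furthermore'' statement drops the drift and hence removes the reason to require $s>\tfrac12$, so the same three-step scheme goes through verbatim for every $s\in(0,1)$.

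The main obstacle I expect is the commutator/cut-off estimate for $(-\Delta)^s(\zeta\eta_\varepsilon)$ near the origin: because the operator is nonlocal, the hole carved out by $\eta_\varepsilon$ contributes singular integrals with kernel $|x-y|^{-n-2s}$ that must be absorbed using only $u\in\mathcal{L}_{2s}$, $u\ge 0$, and the smoothness of $\zeta$. This is precisely the kind of ``simple and useful lemma'' advertised in the abstract, and both the $L^1_{\mathrm{loc}}$ bound in step (i) and the limit $\varepsilon\to 0^+$ in step (ii) will ultimately rest on such a uniform estimate.
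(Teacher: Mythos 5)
Your step (ii) contains a genuine gap, and it is exactly where the content of B\^ocher's theorem lies. You assert that for any nonnegative $\varphi\in C_c^\infty(B_1)$, testing against $\varphi\eta_\varepsilon\in C_c^\infty(B_1\setminus\{0\})$ and passing to the limit gives $\langle T,\varphi\eta_\varepsilon\rangle\to\langle T,\varphi\rangle$, whence $T\geq 0$ on $B_1$. This convergence is \emph{false} in general, and the failure is not a technical commutator issue to be absorbed by a clever estimate. Take $\vec b\equiv 0$, $c\equiv 0$, and $u(x)=|x|^{2s-n}$, so that $T=(-\Delta)^s u=c_{n,s}\delta_0$ on $B_1$ and the hypothesis $T\geq 0$ on $B_1\setminus\{0\}$ holds trivially (there $T=0$). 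Then $\langle T,\varphi\eta_\varepsilon\rangle=c_{n,s}\,(\varphi\eta_\varepsilon)(0)=0$ for every $\varepsilon$, while $\langle T,\varphi\rangle=c_{n,s}\varphi(0)$, which need not be zero. So the limit you invoke does not exist in the form you need; the information carried by $T$ at the origin is destroyed, not recovered, by the cutoff. (The conclusion of the theorem is of course still true for this $u$, but your route to it is not.)

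The underlying reason this route cannot work is that, a priori, extending a nonnegative distribution from $B_1\setminus\{0\}$ to $B_1$ can introduce \emph{any} distribution supported at $\{0\}$ of order $\leq 1$ as a residue, including $\vec d\cdot\nabla\delta_0$ and $a\delta_0$ with $a<0$; neither of these can be ruled out by a soft limiting argument. The paper controls this in two stages. First, one tests not with $\varphi\eta_\varepsilon$ but with $\psi\rho_\varepsilon$, where $\psi(x)=\varphi(x)-[\varphi(0)+\nabla\varphi(0)\cdot x]\,\eta(x)$ vanishes to second order at $0$; then, because $|\psi|\lesssim|x|^2$, $|\nabla\psi|\lesssim|x|$, the cutoff commutator genuinely vanishes in the limit, and one obtains the extended equation with a residue $a\delta_0+\vec d\cdot\nabla\delta_0$ of indeterminate sign. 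Second, and crucially, one must \emph{prove} $\vec d=0$ and $a\geq 0$: this is done via the local regularity lemmas (Lemma~\ref{lckest2}, Lemma~\ref{l 8.1}, Lemma~\ref{l 8.2}), the explicit fundamental solution $\Phi(x)=c_{n,s}|x|^{2s-n}$, and the positivity of $u$ (if $\vec d\neq 0$ then $(\vec d\cdot\nabla\Phi)^-\notin L^{n/(n-2s+1)}$ contradicting $u\geq 0$; if $a<0$ then $\delta^{-2s}\int_{B_\delta}u\,dx<0$ for small $\delta$, again contradicting $u\geq 0$). None of this sign analysis appears in your outline. Finally, a smaller but real omission: showing that the nonnegative Radon measure on $B_1\setminus\{0\}$ actually has finite mass near the origin is not automatic; the paper proves it with the barrier family $\psi_\epsilon(x)=\max\{1-(\epsilon/|x|)^\alpha,0\}$ ($0<\alpha<n-2s$) and the mollification estimate of Corollary~\ref{lckcor4}, a step you do not address.
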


 Inspired by the classical work of Berestycki-Nirenberg-Varadhan \cite{berestycki3}, Caffarelli-Nirenberg-Spruk \cite{caffarelli},  we derive the maximum principles for  `super-harmonic functions' and `fractional super-harmonic functions' on a punctured ball utilizing the above
 B\^{o}cher type theorems.
\begin{theorem}\label{l 0.2}
Let $B_r(x_0)\subset\mathbb{R}^n$ with $n\geq2$ and $r\leq1$, $\|\vec{b}(x)\|_{C^1(B_r(x_0))}<M$ and $c(x)\leq M$ in $B_r(x_0)$ for some constant $M>0$. Assume that $ u(x)\in L_{{\rm loc}}^1(B_r(x_0)\backslash\{x_0\})$ satisfies
\begin{equation}\label{6.1}
\left\{\ \begin{aligned}
&\!-\Delta u(x)+\vec{b}(x)\cdot\nabla u(x)+ c(x)u(x)\geq0,\ \ &{\rm in}\ \ &\mathcal{D}' (B_r(x_0)\backslash\{x_0\}),\\
&u(x)\geq m>0, &{\rm in}\ \  &B_r(x_0)\backslash B_{\frac{r}{2}}(x_0),\\
&u(x)\geq0, &{\rm in}\ \  &B_r(x_0).
\end{aligned}
\right.
\end{equation}
Then there exists a constant $\alpha >0$  depending on $n$ and $M$ only, such that $u(x)$ satisfies
\begin{equation}\label{6.2}
u(x)\geq \alpha m,\ \ {\rm in}\  B_r(x_0).
\end{equation}
In particular, when $M=0$, we have $\alpha=1$, i.e.
\begin{equation}\label{6.3}
u(x)\geq \alpha m,\ \ {\rm in}\  B_r(x_0).
\end{equation}
\end{theorem}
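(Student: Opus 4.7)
The plan is to reduce the problem to one on the full ball via the already-established B\^ocher theorem (Theorem~\ref{l 0.0}), and then extract the lower bound through a comparison argument based on a positive multiplicative barrier that absorbs the sign-indefinite zeroth-order coefficient $c$. First, observe that $u$ satisfies the hypotheses of Theorem~\ref{l 0.0}, so we obtain $u\in L^1_{\mathrm{loc}}(B_r(x_0))$ together with
\[
Lu := -\Delta u + \vec b(x)\cdot\nabla u + c(x) u \;=\; \mu + a\,\delta_{x_0} \;\geq\; 0 \quad\text{in}\ \mathcal{D}'(B_r(x_0)),
\]
where $\mu$ is a non-negative Radon measure and $a\geq 0$. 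In particular, $u$ is a distributional supersolution of $L$ on the \emph{whole} ball $B_r(x_0)$.

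The weak minimum principle does not apply to $L$ directly because $c$ is only bounded above by $M$, not necessarily non-positive. I would remedy this by introducing the smooth positive barrier
\[
\phi(x) := e^{\sigma|x-x_0|^2},
\]
with $\sigma\geq 0$ to be chosen depending only on $n$ and $M$. A direct computation yields
\[
L\phi = \phi\,\bigl[-2n\sigma - 4\sigma^2|x-x_0|^2 + 2\sigma\,\vec b(x)\cdot(x-x_0) + c(x)\bigr];
\]
using $|x-x_0|\leq r\leq 1$, $|\vec b|\leq M$, $c\leq M$, and completing the square in $|x-x_0|$, the choice $\sigma = M(M+4)/(8n)$ guarantees $L\phi\leq 0$ in $B_r(x_0)$. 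When $M=0$ we simply take $\sigma=0$, i.e., $\phi\equiv 1$.

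Next, writing $u=\phi w$ (with $w = u/\phi \geq 0$ still in $L^1_{\mathrm{loc}}$) and using that $\phi$ is smooth and strictly positive, one has the distributional factorization
\[
Lu \;=\; \phi\,\widetilde{L}w + w\,L\phi, \qquad \widetilde{L} := -\Delta + \bigl(\vec b - 2\nabla\phi/\phi\bigr)\cdot\nabla,
\]
where $\widetilde{L}$ is a smooth drift-diffusion operator with \emph{no zeroth-order term}. Since $w\geq 0$, $L\phi\leq 0$, and $Lu\geq 0$, this forces $\widetilde{L}w\geq 0$ in $\mathcal{D}'(B_r(x_0))$. Moreover, the outer annulus bound $u\geq m$ and $\phi\leq e^{\sigma r^2}$ on $B_r(x_0)$ give $w\geq m e^{-\sigma r^2}$ on $B_r(x_0)\setminus B_{r/2}(x_0)$.

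Because $\widetilde{L}$ annihilates constants, $v := w - m e^{-\sigma r^2}$ is also a distributional supersolution of $\widetilde{L}$, and $v\geq 0$ on the outer annulus. Applying the weak minimum principle for the constant-free operator $\widetilde{L}$ on sub-balls $B_\rho(x_0)$ with $r/2<\rho<r$ and sending $\rho\to r$ yields $v\geq 0$, i.e., $w\geq m e^{-\sigma r^2}$ on all of $B_r(x_0)$. Since $\phi\geq 1$, we conclude $u = \phi w \geq m e^{-\sigma r^2}$; taking $\alpha := e^{-\sigma}$ and using $r\leq 1$ so that $e^{-\sigma r^2}\geq e^{-\sigma}$ establishes \eqref{6.2}, and when $M=0$ the choice $\sigma=0$ automatically gives $\alpha=1$. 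The main technical obstacle is the rigorous weak minimum principle for $\widetilde{L}$ applied to an $L^1_{\mathrm{loc}}$ distributional supersolution that may be singular at $x_0$ (through the $a\,\delta_{x_0}$ contribution); I would handle this by mollifying $v$ against a compactly supported non-negative smooth kernel, controlling the commutators by the smoothness of the drift of $\widetilde{L}$, invoking the classical smooth weak minimum principle on sub-balls, and passing to the limit.
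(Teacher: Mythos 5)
Your proposal is essentially correct and shares the paper's backbone: reduce to the full ball via Theorem~\ref{l 0.0}, then extract the interior lower bound by some form of the weak minimum principle, handling the $L^1_{\rm loc}$ irregularity by mollification with commutator control (Lemma~\ref{lcklem3} is exactly the tool both arguments need). Where you diverge is in \emph{how} the sign-indefinite zeroth-order term is absorbed. The paper replaces $c$ by $M$ (using $u\geq0$), mollifies, introduces the auxiliary Dirichlet solution $v^\delta$ of $Lv^\delta=N_\delta$ with zero boundary data, and then simply asserts that ``the standard maximum principle'' gives $u^\delta - v^\delta\geq C(n,M)m$ with $C(n,M)>0$; the constant $C(n,M)$ is not constructed, only cited. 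Your approach makes that hidden step explicit: the multiplicative barrier $\phi=e^{\sigma|x-x_0|^2}$ with $\sigma=M(M+4)/(8n)$ (your completing-the-square bound for $-4\sigma^2 t^2+2\sigma M t$ at $t=M/(4\sigma)$ is correct) converts the operator into a drift-diffusion operator $\widetilde L$ with no zeroth-order term, for which constants are solutions and the classical weak minimum principle applies cleanly with constant $1$, yielding $\alpha=e^{-\sigma}$ explicitly. The factorization $Lu=\phi\widetilde Lw+wL\phi$ is valid distributionally because $\phi$ is smooth and strictly positive, and dividing a nonnegative distribution by a positive smooth function preserves nonnegativity. In short, you trade the paper's appeal to a black-box quantitative comparison estimate for an explicit barrier; your route is more self-contained and gives a concrete $\alpha$, while the paper's is shorter because it outsources the constant. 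One small bookkeeping point worth tightening if you write this out: the drift of $\widetilde L$ is $\vec b-4\sigma(x-x_0)$, which remains $C^1$-bounded with constants depending only on $n$ and $M$ on $B_r(x_0)$ (using $r\leq1$), so Lemma~\ref{lcklem3} applies to it unchanged when you carry out the mollification step you sketch at the end.
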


\begin{theorem} \label{l 1.2}
Let $B_r(x_0)\subset\mathbb{R}^n$ with $n\geq2$ and $r\leq1$, $\|\vec{b}(x)\|_{C^1(B_r(x_0))}\leq M$ and $c(x)\leq M$ in $B_r(x_0)$ for some constant $M>0$.
Assume that $ u(x)\in  \mathcal{L}_{2s}$ $(\frac{1}{2}<s<1)$ with $u(x)\geq0$, and satisfies
\begin{equation}\label{6.7}
\left\{\ \begin{aligned}
&(-\Delta)^su(x)+\vec{b}(x)\cdot\nabla u(x)+c(x)u(x)\geq0,\ &{\rm in}\ \ &\mathcal{D}'(B_r(x_0)\backslash\{x_0\}),\\
&u(x)\geq m>0,\ &{\rm in}\ \ &B_r(x_0)\backslash B_{\frac{r}{2}}(x_0),\\
&u(x)\geq0,\ &{\rm in}\ \ &\mathbb{R}^n.
\end{aligned}
\right.
\end{equation}
Then there exists a positive constant $\alpha$  depending on $n$, $s$ and $M$ only, such that
\begin{equation}\label{6.8}
u(x)\geq \alpha m,\ \ {\rm in}\  B_r(x_0).
\end{equation}
Moreover, the theorem holds for all $s\in(0,1)$ when $\vec{b}(x)\equiv0$ in (\ref{6.7}).
\end{theorem}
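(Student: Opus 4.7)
The strategy is to use the fractional B\^{o}cher theorem to extend the inequality through the singularity at $x_0$, and then compare $u$ against an explicit barrier via a small-domain maximum principle. First, translate and rescale so that Theorem~\ref{l 0.1} applies: since $u\geq 0$ and (\ref{6.7}) holds on the punctured ball, we obtain
\[
(-\Delta)^s u+\vec{b}\cdot\nabla u+cu=\mu+a\delta_{x_0}\geq 0\quad\text{in }\mathcal{D}'(B_r(x_0))
\]
for some nonnegative Radon measure $\mu$ and constant $a\geq 0$. Thus $u$ is a distributional supersolution of the operator $L:=(-\Delta)^s+\vec{b}\cdot\nabla+c$ on the \emph{full} (non-punctured) ball $B_r(x_0)$.

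Next, construct a barrier $\phi$ by solving
\[
L\phi=0\text{ in }B_{r/2}(x_0),\qquad \phi=m\,\chi_{B_r(x_0)\setminus B_{r/2}(x_0)}\text{ in }\mathbb{R}^n\setminus B_{r/2}(x_0).
\]
Solvability on the small ball $B_{r/2}(x_0)$ is guaranteed by the Dirichlet theory for the fractional operator with bounded lower-order coefficients (in the spirit of Berestycki-Nirenberg-Varadhan~\cite{berestycki3}; the smallness $r\leq 1$ together with the bound $M$ keeps the comparison principle valid on $B_{r/2}(x_0)$). By hypothesis, $u\geq\phi$ on $\mathbb{R}^n\setminus B_{r/2}(x_0)$ (since $u\geq m=\phi$ on the annulus $B_r\setminus B_{r/2}$ and $u\geq 0=\phi$ outside $B_r$), and by the first step $L(u-\phi)\geq 0$ on $B_{r/2}(x_0)$, so the small-domain comparison principle yields $u\geq\phi$ throughout $B_{r/2}(x_0)$. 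The proof is then completed by the pointwise interior lower bound $\phi\geq\alpha m$ on $B_{r/2}(x_0)$: in the pure fractional case $L=(-\Delta)^s$ this is immediate from the explicit Poisson-kernel representation on the ball (Riesz's formula), with $\alpha=\alpha(n,s)>0$ independent of $r$ by scaling. The general case follows by treating $\vec{b}\cdot\nabla+c$ as a controlled perturbation, using $r\leq 1$ and $\|\vec{b}\|_{C^1}+\|c\|_{L^\infty}\leq M$.

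\noindent\textit{Main obstacle.} The technical heart is the final quantitative interior lower bound for $\phi$. For the pure fractional Laplacian the Poisson integral over the annulus $B_r\setminus B_{r/2}$ can be bounded below directly, but adding $\vec{b}\cdot\nabla$ and $c$ forces either a careful perturbation argument or a Harnack-type inequality for $L$, both of which must preserve the dependence of $\alpha$ on $(n,s,M)$ only; here the assumptions $r\leq 1$ and $\|\vec{b}\|_{C^1}+\|c\|_{L^\infty}\leq M$ are essential for keeping the comparison and the interior estimate uniform. The restriction $s>\tfrac{1}{2}$ enters only so that $\vec{b}\cdot\nabla u$ makes sense as a distribution paired with $u\in\mathcal{L}_{2s}$ (as in Theorem~\ref{l 0.1}); when $\vec{b}\equiv 0$, both the B\^{o}cher reduction and the barrier estimate extend to every $s\in(0,1)$, yielding the final assertion of the theorem.
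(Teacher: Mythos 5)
Your first step --- applying Theorem~\ref{l 0.1} to upgrade the differential inequality from the punctured ball to the full ball --- is precisely how the paper begins. After that the paper reduces to the separately proved Theorem~\ref{l 1.3}, whose proof truncates $u$, sets $v=\min\{u,m\}$, mollifies, and evaluates the differential inequality \emph{directly at the interior minimum} $\bar x$ of $v^\delta$; because $v^\delta\le m$ globally and $v^\delta=m$ on a fixed annulus, the nonlocal term $(-\Delta)^s v^\delta(\bar x)$ contributes $M_5(\bar x)(v^\delta(\bar x)-m)$ with $M_5(\bar x)\ge M_2>0$, which forces $v^\delta(\bar x)\ge \alpha m$ with $\alpha=M_2/\bigl(2(M_1+M_2+2M+MM_3)\bigr)$. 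This $\alpha$ is positive for \emph{every} $M>0$; it merely degenerates as $M\to\infty$.

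Your barrier-and-comparison route departs from this, and the departure is where the gap lies. You build $\phi$ by solving $L\phi=0$ on $B_{r/2}(x_0)$ and then invoke a ``small-domain comparison principle'' to conclude $u\ge\phi$. But $r\le 1$ does not make $B_{r/2}(x_0)$ small relative to $M$: the theorem allows $r=1$ and $c\equiv M$ with $M$ arbitrarily large, and once $M$ exceeds the first Dirichlet eigenvalue of $(-\Delta)^s$ on $B_{1/2}$ the comparison principle for $L=(-\Delta)^s+\vec b\cdot\nabla+c$ on $B_{r/2}(x_0)$ simply fails --- and the barrier $\phi$ itself need not be nonnegative. The final step, $\phi\ge\alpha m$ by ``treating $\vec b\cdot\nabla+c$ as a controlled perturbation,'' has the same problem: the perturbation has size $M$ while the reference operator $(-\Delta)^s$ lives at the fixed scale of $B_{1/2}$, so nothing in the argument is uniform in $M$. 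The truncation-at-$m$ device of Theorem~\ref{l 1.3} is exactly what sidesteps the eigenvalue obstruction: it never uses a comparison principle, and it gives $\alpha(n,s,M)>0$ for all $M$. Your plan would go through under a smallness hypothesis on $M$, but that is strictly weaker than what the theorem asserts. (A minor side remark: the role of $s>\tfrac12$ here is not that $\vec b\cdot\nabla u$ be definable as a distribution --- that holds for any $u\in L^1_{\rm loc}$ --- but that the regularity lemmas underlying Theorem~\ref{l 0.1} need $(-\Delta)^s$ to dominate the first-order drift, i.e.\ $2s>1$.)
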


The following is another interesting  maximum principle:
\begin{theorem} \label{l 1.3}
Let $B_r(x_0)\subset\mathbb{R}^n$ with $n\geq2$ and $r\leq1$, $\|\vec{b}(x)\|_{C^1(B_r(x_0))}\leq M$ and $c(x)\leq M$ in $B_r(x_0)$ for some constant $M>0$.
Assume that $ u(x)\in  \mathcal{L}_{2s}$ $(0<s<1)$ with $u(x)\geq0$, and satisfies
\begin{equation}
\left\{\ \begin{aligned}
&(-\Delta)^su(x)+\vec{b}(x)\cdot\nabla u(x)+c(x)u(x)\geq0,\ &{\rm in}\ \ &\mathcal{D}'(B_r(x_0)),\\
&u(x)\geq m>0,\ &{\rm in}\ \ &B_r(x_0)\backslash B_{\frac{r}{2}}(x_0),\\
&u(x)\geq0,\ &{\rm in}\ \ &\mathbb{R}^n.
\end{aligned}
\right.
\end{equation}
Then there exists a positive constant $\alpha$  depending on $n$, $s$ and $M$ only, such that
\begin{equation}
u(x)\geq \alpha m,\ \ {\rm in}\  B_r(x_0).
\end{equation}
\end{theorem}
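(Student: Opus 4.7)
The plan is to exploit the fact that the distributional inequality in Theorem \ref{l 1.3} is assumed to hold on the \emph{full} ball $B_r(x_0)$, not merely on the punctured ball. This lets us bypass the B\^ocher-type extension step entirely and, consequently, remove the restriction $s>\tfrac{1}{2}$ that is present in Theorem \ref{l 1.2}.

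The first step is to observe that the distribution
\[
T := (-\Delta)^s u + \vec b\cdot\nabla u + c\,u
\]
is nonnegative on $B_r(x_0)$ by hypothesis. By the standard representation of nonnegative distributions, $T$ coincides with a nonnegative Radon measure $\mu$ on $B_r(x_0)$, so one obtains the equation
\[
(-\Delta)^s u + \vec b\cdot\nabla u + c\,u = \mu \qquad \text{in } \mathcal{D}'(B_r(x_0)),
\]
valid for every $s\in(0,1)$. This is exactly the form of the conclusion \eqref{eq111} of Theorem \ref{l 0.1}, but with Dirac coefficient $a=0$; since there is no singular support at $x_0$ to resolve, no B\^ocher argument (and hence no restriction on $s$ or on $\vec b$) is needed.

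From this point the argument proceeds exactly as in the barrier/comparison part of the proof of Theorem \ref{l 1.2}. Specifically, I would construct the barrier $\phi_r$ solving
\[
\begin{cases}
(-\Delta)^s \phi_r + \vec b\cdot\nabla \phi_r + c\,\phi_r = 0 & \text{in } B_{r/2}(x_0),\\
\phi_r = m & \text{in } B_r(x_0)\setminus B_{r/2}(x_0),\\
\phi_r = 0 & \text{in } \mathbb{R}^n\setminus B_r(x_0),
\end{cases}
\]
and then appeal to the maximum principle for the operator $(-\Delta)^s + \vec b\cdot\nabla + c$ on the small ball $B_{r/2}(x_0)$---whose validity follows from $r\leq 1$ together with the bounds on $\vec b$ and $c$---to deduce $u\geq \phi_r$ in $B_{r/2}(x_0)$. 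A quantitative lower bound $\phi_r \geq \alpha m$ with $\alpha=\alpha(n,s,M)>0$, obtained from the Poisson-type representation of $\phi_r$ and the positivity of the corresponding kernel on the annulus $B_r(x_0)\setminus B_{r/2}(x_0)$, then yields the desired estimate.

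The main obstacle I anticipate is justifying the maximum principle step when $s\leq\tfrac{1}{2}$ and $\vec b\not\equiv 0$: in that regime the drift $\vec b\cdot\nabla$ is of strictly higher differential order than the diffusion $(-\Delta)^s$, so the classical maximum principle on $B_{r/2}(x_0)$ is not automatic. This will be handled either by a smallness argument (absorbed into the condition $r\leq 1$ and into the constant $\alpha$) or by introducing an auxiliary weight function chosen to dominate the drift term, as is standard in the fractional setting. Once this technical verification is in place, the remaining steps (existence of $\phi_r$, the Poisson-type lower bound, and the final comparison) are insensitive to the value of $s$, which is precisely why Theorem \ref{l 1.3} extends Theorem \ref{l 1.2} to the full range $s\in(0,1)$ even in the presence of a drift term.
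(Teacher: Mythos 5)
Your first step is correct and is shared with the paper: since the distributional inequality holds on the full ball, the left-hand side is a nonnegative Radon measure and no B\^ocher reduction is needed. From there, however, your barrier/comparison strategy has a genuine gap, and it is not the route the paper takes. You acknowledge the comparison principle for $(-\Delta)^s+\vec b\cdot\nabla+c$ on $B_{r/2}(x_0)$ as the ``main obstacle'' when $s\le\frac12$ and $\vec b\not\equiv0$, but you resolve it only with a promissory note (``smallness argument'' or ``auxiliary weight''), neither of which is worked out. More seriously, the quantitative lower bound $\phi_r\ge\alpha m$ via a ``Poisson-type representation'' is not available: for $(-\Delta)^s+\vec b\cdot\nabla+c$ with drift there is no explicit Poisson kernel, and establishing positivity and a uniform lower bound for such a kernel on the annulus is essentially a restatement of the theorem you are trying to prove. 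Indeed, $\phi_r$ itself satisfies the hypotheses of Theorem \ref{l 1.3}, so asserting $\phi_r\ge\alpha m$ is circular unless you supply an independent argument. Existence and regularity of $\phi_r$ for $s\le\frac12$ with nonsmooth exterior data (a jump of size $m$ across $\partial B_r$) is also left unaddressed, and that too is delicate precisely in the regime where $(-\Delta)^s$ is not the dominant term.

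The paper's actual proof avoids barriers and auxiliary Dirichlet problems entirely. After normalizing to $r=1$, $x_0=0$, $c\equiv M$, it truncates $u$ at level $m$: by Theorem \ref{lckcor2}, $v=\min\{u,m\}$ still satisfies $(-\Delta)^sv+\vec b\cdot\nabla v+Mv\ge0$ in $\mathcal D'(B_1)$, and $0\le v\le m$. It then mollifies, takes $\bar x$ a minimum point of $v^\delta$ over $\overline{B_{7/8}}$, and reduces to $\bar x\in B_{5/8}$, where $\nabla v^\delta(\bar x)=0$. The drift term becomes a mollification commutator $J_\delta(\vec b\cdot\nabla v)(\bar x)-\vec b(\bar x)\cdot\nabla v^\delta(\bar x)$, which is controlled by an explicit mollifier estimate (the quantities $M_3,\sigma$). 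The heart of the argument is then a direct decomposition of $(-\Delta)^sv^\delta(\bar x)$: the integral over $B_{5/8}$ is nonpositive because $\bar x$ is the minimum there, the integral over $B_{7/8}^c$ is bounded above by $M_1\,v^\delta(\bar x)$ because $0\le v^\delta\le m$, and the integral over the annulus $B_{7/8}\setminus B_{5/8}$ is bounded above by $M_2\bigl(v^\delta(\bar x)-m\bigr)$ because $v^\delta\equiv m$ there. The inequality $0\le Lv^\delta(\bar x)$ then forces $v^\delta(\bar x)\ge\alpha m$ with an explicit $\alpha(n,s,M)$. This is why the result holds for all $s\in(0,1)$ with drift: at an interior minimum the first-order term contributes only a controllable commutator, and the annulus where $v\equiv m$ always injects a fixed positive amount into the nonlocal operator. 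If you want to salvage a barrier-type proof, you would need to supply exactly this interior-minimum argument to justify both the comparison step and the lower bound on $\phi_r$ --- at which point the barrier is superfluous.
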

It is surprising that even when $s<\frac{1}{2}$, which means $(-\Delta)^s u$ is no longer a dominate term,  the maximum principle still holds.
%We now introduce the following fractional maximum principle for anti-symmetric functions. To begin with, we define
%  \begin{gather*}
%x=(x^1,x')\in\mathbb{R}^n,\qquad Tx=(-x^1,x'), \qquad\Sigma=\{(x^1,x')\in\mathbb{R}^n|x^1<0\}.
%  \end{gather*}
%\begin{theorem}\label{l 1.3}
%Let $B_r(x_0)\subset\Sigma$ with $n\geq2$, $\|\vec{b}(x)\|_{C^1(B_r(x_0))}<\infty$ and $c(x)\leq M$ in $B_r(x_0)$ for some constant $M>0$, and $s\in(\frac{1}{2},1)$.
%Assume that $u(x)\in  \mathcal{L}_{2s}$  satisfies
%\begin{equation}\label{6.15}
%\left\{\ \begin{aligned}
%&(-\Delta)^su(x)+\vec{b}(x)\cdot\nabla u(x)+c(x)u(x)\geq0,\ &{\rm in} \ \ &\mathcal{D}'( B_r(x_0)\backslash\{x_0\}),\\
%&u(x)\geq m>0,\  &{\rm on} \ \  &B_r(x_0)\backslash B_{\frac{r}{2}}(x_0),\\
%&u(x)=-u(Tx)\geq0,\ &{\rm for\ all}\ \ &x\in \Sigma.
%\end{aligned}
%\right.
%\end{equation}
%Then there exists a positive constant $C=C(n,s,M)$  depending on $n$, $s$ and $M$ only, such that
%\begin{equation}\label{6.16}
%u(x)\geq Cm,\ \ {\rm in}\ B_r(x_0).
%\end{equation}
%Additionally, the theorem holds for $s\in(0,1)$ when $\vec{b}(x)\equiv0$ in (\ref{6.15}).
%\end{theorem}

%Also, our version of the maximum principle assumes very basic regularity of $u(x)\in L_{{\rm loc}}^1(B_1\backslash \{0\})$.
These maximum principles are the basic tools to deal with singular solutions for both equations of Laplacian and fractional Laplacian. In particular, these are essential to study the equations of Laplacian or fractional Laplacian by Kelvin transform and methods of moving plane. See the related references \cite{chen2,chen3,chen4,chenc,jarohs} in this direction.
It is an interesting open problem to know if these maximum principles also hold in the anti-symmetric cases.

In proving these B\^{o}cher Type theorems and maximum principles, we develop some basic theorems in section \ref{5}, which are also interesting in their own.
The following is one of such theorems.
\begin{theorem}
Let $\Omega$ be a domain in $\mathbb{R}^n$. Assume $u(x), v(x) , f(x) ,g(x) \in L^1_{\rm loc}(\Omega) $, and satisfy
\begin{equation}
\begin{array}{rl}
&\displaystyle (-\Delta)^su(x)+\vec{b}(x)\cdot\nabla u(x)+c(x)u(x)\leq f(x)\ \ {\rm in}\ \mathcal{D}'(\Omega),\\
&\displaystyle (-\Delta)^sv(x)+\vec{b}(x)\cdot\nabla v(x)+c(x)v(x)\leq g(x)\ \ {\rm in}\ \mathcal{D}'(\Omega),
\end{array}
\end{equation}
where $\|\vec b(x)\|_{C^1(\Omega)}+\|c(x)\|_{L^\infty(\Omega)}<\infty$.  Then for $w(x)=\max\{u(x),v(x)\}$, it holds that
\begin{equation}
(-\Delta)^sw(x)+\vec{b}(x)\cdot\nabla w(x)+c(x)w(x)\leq f(x)\chi_{u> v} + g(x)\chi_{u<v}+\max\{f(x),g(x)\}\chi_{u=v}
\end{equation}
in the sense of distributions on $\Omega$.
\end{theorem}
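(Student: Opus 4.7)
The plan is to first verify the inequality pointwise for smooth approximants and then pass to the distributional limit via mollification. Assume for the moment that $u,v$ are smooth and that $Lu\le f$, $Lv\le g$ hold pointwise, where $L:=(-\Delta)^s+\vec b\cdot\nabla+c$. At any $x_0\in\Omega$ with $u(x_0)>v(x_0)$, continuity gives $w=u$ in a neighborhood of $x_0$, so $w(x_0)=u(x_0)$ and $\nabla w(x_0)=\nabla u(x_0)$; and since $w\ge u$ everywhere,
$$(-\Delta)^s u(x_0)-(-\Delta)^s w(x_0)= C_{n,s}\int\frac{w(y)-u(y)}{|x_0-y|^{n+2s}}\,dy\ge 0$$
(no principal-value subtlety, since the integrand is nonnegative). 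Adding the drift and zeroth-order terms yields $Lw(x_0)\le Lu(x_0)\le f(x_0)$. By symmetry $Lw\le g$ on $\{u<v\}$, and on $\{u=v\}$ the same comparison with either $u$ or $v$ gives $Lw\le\min\{Lu,Lv\}\le\max\{f,g\}$, establishing the pointwise conclusion in the smooth case.

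For general $u,v\in L^1_{{\rm loc}}(\Omega)$, regularize with a standard nonnegative mollifier $\rho_\epsilon$: let $u_\epsilon=u*\rho_\epsilon$, $v_\epsilon=v*\rho_\epsilon$ on $\Omega_\epsilon\Subset\Omega$. Since $(-\Delta)^s$ commutes with convolution, convolving the distributional inequality $Lu\le f$ with $\rho_\epsilon$ yields
$$(-\Delta)^s u_\epsilon + (\vec b\cdot\nabla u)*\rho_\epsilon + (cu)*\rho_\epsilon \le f*\rho_\epsilon.$$
A Friedrichs commutator argument (using $\vec b\in C^1$ and $c\in L^\infty$) shows that $(\vec b\cdot\nabla u)*\rho_\epsilon - \vec b\cdot\nabla u_\epsilon\to 0$ and $(cu)*\rho_\epsilon - cu_\epsilon\to 0$ in $L^1_{{\rm loc}}(\Omega)$. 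Hence $Lu_\epsilon\le \tilde f_\epsilon$ pointwise, with $\tilde f_\epsilon\to f$ in $L^1_{{\rm loc}}$, and similarly $Lv_\epsilon\le \tilde g_\epsilon\to g$. Applying the smooth case to $(u_\epsilon,v_\epsilon)$ gives, with $w_\epsilon=\max\{u_\epsilon,v_\epsilon\}$,
$$Lw_\epsilon\le \tilde f_\epsilon\,\chi_{u_\epsilon>v_\epsilon} + \tilde g_\epsilon\,\chi_{u_\epsilon<v_\epsilon} + \max\{\tilde f_\epsilon,\tilde g_\epsilon\}\,\chi_{u_\epsilon=v_\epsilon}.$$

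To finish, test against an arbitrary nonnegative $\varphi\in C_c^\infty(\Omega)$. Because $\max$ is $1$-Lipschitz in each argument, $w_\epsilon\to w$ in $L^1_{{\rm loc}}$, so the left-hand pairing converges to $\langle Lw,\varphi\rangle$. Along a subsequence on which $u_\epsilon,v_\epsilon$ converge a.e., on $\{u\ne v\}$ the indicators $\chi_{u_\epsilon>v_\epsilon}$, $\chi_{u_\epsilon<v_\epsilon}$ converge a.e.\ to $\chi_{u>v}$, $\chi_{u<v}$, while on $\{u=v\}$ the entire right-hand side is pointwise dominated by $\max\{\tilde f_\epsilon,\tilde g_\epsilon\}\to\max\{f,g\}$. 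Dominated convergence in $L^1_{{\rm loc}}$, with the uniform $L^1_{{\rm loc}}$ dominator $|f|*\rho_\epsilon+|g|*\rho_\epsilon$, delivers the claimed distributional inequality. The main obstacle is precisely this last step: the indicators $\chi_{u_\epsilon>v_\epsilon}$ need not converge on $\{u=v\}$, and the presence of $\max\{f,g\}$ on that set in the theorem is exactly what is needed to absorb any limit behavior of the mollified right-hand sides, so the statement is essentially forced by the structure of the approximation.
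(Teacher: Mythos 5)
Your proposal follows the same high-level strategy as the paper's proof (which reduces Theorem~\ref{lckcor2} to the Laplacian analogue Theorem~\ref{lckcor1}): mollify, control the $\vec b\cdot\nabla$ commutator (your ``Friedrichs commutator argument'' is exactly Lemma~\ref{lcklem3}), apply the smooth case, and pass to the limit. The two places where you diverge are the smooth-case step and the final limit, and both deviations are worth flagging.

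For the smooth case, you argue \emph{pointwise}: at points where $w=\max\{u,v\}$ is locally a smooth branch this is fine, and your computation of $(-\Delta)^su(x_0)-(-\Delta)^sw(x_0)$ is correct. But $w$ is only Lipschitz, so on $\{u=v\}$ the function $w$ need not be differentiable, and for $s\geq\tfrac12$ the principal-value integral defining $(-\Delta)^sw(x_0)$ need not converge at a corner of $w$. What the theorem asserts is a \emph{distributional} inequality, and passing from a pointwise-a.e.\ inequality on $\{u\neq v\}$ to a distributional one requires knowing that $(-\Delta)^sw$, as a distribution, has no singular part of the wrong sign supported on $\{u=v\}$. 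The paper side-steps this entirely in Lemma~\ref{lcklem2} by proving the distributional inequality directly: it rewrites $\int_{\mathbb{R}^n}w(-\Delta)^s\varphi$ as a symmetrized double integral, splits it over $U=\{u>0\}$ and $U^c$, and uses the sign structure $u\le 0$ on $U^c$ to discard the cross term (see \eqref{3.2}). Your pointwise intuition is correct in spirit --- the omitted contribution has the favorable sign --- but the step ``pointwise a.e.\ implies distributional'' is exactly the nontrivial content, and should be justified along the lines of \eqref{3.2} rather than asserted.

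For the final limit, you extract an a.e.-convergent subsequence and invoke dominated convergence; the paper instead fixes $\sigma>0$, partitions $\Omega'$ into the \emph{$\delta$-independent} sets $\{u-v>\sigma\}\setminus A_\epsilon$, $\{u-v<-\sigma\}\setminus A_\epsilon$, $\{|u-v|\le\sigma\}\cup A_\epsilon$ via Egorov, bounds the mollified right-hand side by $\max\{f^\delta,g^\delta\}$ on the bad set, and then sends $\delta\to0$, $\epsilon\to0$, $\sigma\to0$ in order. Both routes exploit the same observation --- the right-hand side is always $\le\max\{\tilde f_\epsilon,\tilde g_\epsilon\}$, so $\max\{f,g\}\chi_{u=v}$ absorbs whatever the indicators do on the coincidence set --- but the paper's version avoids the delicacy of a dominating function that itself depends on $\epsilon$. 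Your ``uniform $L^1_{\rm loc}$ dominator $|f|*\rho_\epsilon+|g|*\rho_\epsilon$'' is not a fixed dominator in the sense required by the dominated convergence theorem; you would need either a further subsequence (Riesz--Fischer) with a genuine $L^1$ majorant, or Fatou's lemma applied to $\max\{\tilde f_\epsilon,\tilde g_\epsilon\}-\mathrm{RHS}_\epsilon\ge 0$. Finally, note that the theorem as stated should really assume $u,v\in\mathcal{L}_{2s}$ (as in the paper's Section~\ref{5} version) so that $(-\Delta)^su$, $(-\Delta)^sv$ make distributional sense and so that $w_\epsilon\to w$ in $\mathcal{L}_{2s}$, which is what is actually needed for $\langle(-\Delta)^sw_\epsilon,\varphi\rangle\to\langle(-\Delta)^sw,\varphi\rangle$, not merely $L^1_{\rm loc}(\Omega)$ convergence.
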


Some simple cases of this lemma are well known with broad applications, our distributional approach makes it somewhat complete.
In fact, the  fractional Laplacian case is quite nontrivial.

We refer readers to some important and related work on the equations of the Laplacian or fractional Laplacian, \cite{berestycki1,berestycki2,gidas1,li} for the symmetry property,  \cite{berestycki3,cabre1} for maximum principles, \cite{caffarelli,chenc,korevaar} for singular solutions, and references therein.

Throughout this paper, the widely used mollification of $\Lambda\in\mathcal{D}'$ is $J_\delta \Lambda$:
\begin{equation}
(J_\delta \Lambda)(x)=(j^\delta\ast\Lambda)(x)=\Lambda(j^\delta_x),
\end{equation}
where $j(y)\in\mathcal{D}(B_1)$ is a positive smooth radially symmetric function supported in $B_1$ satisfying $\int_{\mathbb{R}^n}j(y)dy=1$. Moreover, $j^\delta(y)=\frac{1}{\delta}j(\frac{y}{\delta})$ and $j^\delta_x(y)=j^\delta(x-y)$.

The paper is organized as follows. In section \ref{2} we give a unified approach for the proofs of the B\^{o}cher type theorems for Laplacian
and fractional Laplacian, respectively. The maximum principles on punctured balls are established in section \ref{3}. In section \ref{4} and section \ref{5}, we provide some basic and interesting lemmas for fractional Laplacian, which are keys to derive the B\^{o}cher type theorems for the theorems.
%\subsection{Preliminaries on distributional solutions}
%
%\quad\!\!\quad First we introduce the test function space $\mathcal{D}(\Omega)=C_0^\infty(\Omega)$. And the distribution space $\mathcal{D}'(\Omega)$ is understood as the dual of $\mathcal{D}(\Omega)$, i.e. the space of continuous linear functionals from $\mathcal{D}(\Omega)$ to $\mathbb{R}$.
%
%A function $u(x)\in L^1_{\rm loc}(\Omega)$ can be understood as a distribution: $\forall\varphi(x)\in\mathcal{D}(\Omega)$,
%\begin{equation}
%u(x)(\varphi)=\int_\Omega\varphi(x)u(x)dx.
%\end{equation}
%
%As a consequence of integration by parts, we can define the derivatives of $u(x)\in L^1_{\rm loc}(\Omega)$ to be distributions: $\forall\varphi(x)\in\mathcal{D}(\Omega)$,
%\begin{equation}
%\partial^\alpha u(x)(\varphi)=(-1)^\alpha\int_\Omega\partial^\alpha\varphi(x)u(x)dx.
%\end{equation}
%
%The equalities and inequalities of distributions are understood as follow:
%\begin{enumerate}
%\item $\Lambda_1=\Lambda_2$ in $\mathcal{D}'(\Omega)$, if
%\begin{equation}\forall\varphi(x)\in\mathcal{D}(\Omega),\ \Lambda_1(\varphi)=\Lambda_2(\varphi);\end{equation}
%\item  $\Lambda_1\leq\Lambda_2$ in $\mathcal{D}'(\Omega)$, if
%\begin{equation}\forall\varphi(x)\in\mathcal{D}(\Omega)\ {\rm satisfying}\ \varphi(x)\geq0,\     \Lambda_1(\varphi)\leq\Lambda_2(\varphi).\end{equation}
%\end{enumerate}
\section{B\^{o}cher type theorems}\label{2}
\subsection{Proof of Theorem \ref{l 0.0}}
\quad\!\!\quad In this section, we prove Theorem \ref{l 0.0}.

% \begin{theorem}
% \label{l 0.1}
%Let $u(x)\in L_{{\rm loc}}^1(B_1\backslash \{0\})$ be a nonnegative function in $\mathbb{R}^n$ $(n\geq2)$ satisfying
%\begin{equation}\label{0.2}
%-\Delta u(x)+\vec b(x)\cdot\nabla u(x) + c(x)u(x)\geq0\ \ {\rm in}\ \mathcal{D}'(B_1\backslash \{0\}),
%\end{equation}
%where $\|\vec b(x)\|_{C^1(B_1)}+\|c(x)\|_{L^\infty(B_1)}\leq M$ for some constant $M$, then $u(x)\in L_{{\rm loc}}^1(B_1)$ and
%
%\begin{equation}\label{0.3}
%-\Delta u(x)+\vec b(x)\cdot\nabla u(x) + c(x)u(x)=\mu+a \delta_0(x)\ \ {\rm in}\ \mathcal{D}'(B_1),
%\end{equation}
%for some $a\geq0$ and some nonnegative Radon measure $\mu$ on $B_1$ satisfying $\mu(\{0\})=0$.
%\end{theorem}
\begin{proof}
We first show the existence of $\mu$ as a Radon measure on $B_1$ and that $u(x)\in L_{\rm loc}^1(B_1)$.

To begin with, we denote $\Lambda=-\Delta u(x)+\vec b(x)\cdot\nabla u(x) + c(x)u(x)$ as a distribution. $\Lambda$ is monotone nondecreasing by its nonnegativity. Thus we can extend it to  a nonnegative linear functional on $C_c(B_1\backslash\{0\})$. Applying the Riesz-Markov-Kakutani representation theorem, we can represent $\Lambda$  as
\begin{equation}\label{0.4}
\Lambda(\varphi)=\int_{B_1\backslash\{0\}}\varphi(x)d\mu,
\end{equation}
where $\mu$ is a nonnegative Radon measure defined on $B_1\backslash\{0\}$.

We now extend the definition of $\mu$ as a nonnegative Borel measure on the whole ball $B_1$ with $\mu(\{0\})=0$. Notice that $\mu(B_{r_0})$ may be infinity, since $B_{r_0}\backslash\{0\}$ is not a compact set in $B_1\backslash\{0\}$.
In order to show that $\mu$ is a nonnegative Radon measure on $B_1$, we need $\mu(B_{r}\backslash\{0\})<\infty$, for some $0<r<1$. From (\ref{0.4}) we have the inequality:
\begin{equation}\label{0.5}
-\Delta u(x)+{\rm div}(\vec b(x)u(x))+(2M+1)u(x)\geq u(x)+\mu\ \ {\rm in}\ \mathcal{D}'(B_1\backslash \{0\}).
\end{equation}

For $0<\alpha<1$, $0<\epsilon<1$ and $\sigma=\frac{2M+1}{n}$, define
\begin{equation}
\begin{array}{rl}
\varphi_\epsilon(r)=\bigg\{
\begin{array}{ll}
1+\sigma r^2-\big(\frac{\ln r}{\ln\epsilon}\big)^{\alpha},\ {\rm when}\ n=2,\\[2mm]
1+\sigma r^2-(\frac{\epsilon}{r})^{\alpha},\ {\rm when}\ n\geq3.
\end{array}
\end{array}
\end{equation}
Then we have: when $0<|x|\leq r_0$ for some $r_0$ independent of $\epsilon$,
\begin{equation}
-\Delta\varphi_\epsilon(|x|)-\vec b(x)\cdot\nabla\varphi_\epsilon(|x|)+(2M+1)\varphi_\epsilon(|x|)\leq 0.
\end{equation}
\begin{details}

Indeed,
\begin{equation}
\begin{array}{rl}
&\displaystyle -\Delta\varphi_\epsilon(|x|)-\vec b(x) \cdot\nabla\varphi_\epsilon(|x|)+(2M+1) \varphi_\epsilon(|x|)\\[2mm]
=\!\!&\displaystyle -\varphi_\epsilon''(|x|)-(\frac{n-1}{|x|}+\frac{\vec b(x)\cdot x}{|x|})\varphi_\epsilon'(|x|)+(2M+1) \varphi_\epsilon(|x|)\\[3mm]
=\!\!&[2M+1+(2M+1)\sigma |x|^2-2\sigma(n+\vec b(x)\cdot x)]\\
&-\displaystyle\bigg\{\begin{array}{ll}
\displaystyle \big(\frac{\ln|x|}{\ln\epsilon}\big)^{\alpha}[\frac{(1-\alpha)\alpha}{|x|^2(\ln |x|)^2}-\frac{\alpha\vec b(x)\cdot x}{|x|^2\ln|x|}+2M+1],\ {\rm when}\ n=2,\\[2mm]
\displaystyle\big(\frac{\epsilon}{|x|}\big)^{\alpha}[\alpha(n-\alpha-2)\frac{1}{|x|^2}+\alpha\frac{\vec b(x)\cdot x}{|x|^2}+2M+1],\ {\rm when}\ n\geq3.
\end{array}
\end{array}
\end{equation}

By choosing suitable $\sigma$  ($\sigma n=2M+1$) and $r_1$ ($r_1<\frac{1}{4M+1}$), we have, for $|x|<r_1$:
\begin{equation}
\begin{array}{rl}
&2M+1+(2M+1)\sigma |x|^2-2\sigma(n+\vec b(x)\cdot x)\\
\leq\!\!&-(2M+1)+2(2M+1)M|x|+(2M+1)^2 |x|^2<0.
\end{array}
\end{equation}

While for reasonable $r_2$ (only depends on $n$, $\alpha$ and $M$) and $|x|<r_2$, we have
\begin{equation}
\begin{array}{rl}
&\displaystyle\frac{(1-\alpha)\alpha}{|x|^2(\ln |x|)^2}-\frac{\alpha\vec b(x)\cdot x}{|x|^2\ln|x|}+2M+1\\
\geq\!\!&\displaystyle\frac{(1-\alpha)\alpha}{|x|^2(\ln |x|)^2}+\frac{\alpha M}{|x|\ln|x|}+2M+1>0,\ \ \ n=2
\end{array}
\end{equation}
or
\begin{equation}
\begin{array}{rl}
&\displaystyle\alpha(n-\alpha-2)\frac{1}{|x|^2}+\alpha\frac{\vec b(x)\cdot x}{|x|^2}+2M+1\\
\geq\!\!&\displaystyle\displaystyle\alpha(n-\alpha-2)\frac{1}{|x|^2}-\alpha\frac{M}{|x|}+2M+1>0,\ \ \ n\geq3.
\end{array}
\end{equation}

Hence, for $0<|x|<r_0:=\min\{r_1,r_2\}$, we have
\begin{equation}
-\Delta\varphi_\epsilon(|x|)-\vec b(x) \cdot\nabla\varphi_\epsilon(|x|)+(2M+1) \varphi_\epsilon(|x|)<0.
\end{equation}
\end{details}
We also define
\begin{gather*}
\psi_\epsilon(x)=\max\{0,\varphi_\epsilon(|x|)\},\qquad {\rm and}\qquad \psi_\epsilon^\delta(x)= J_\delta\psi_\epsilon(x),
\end{gather*}
 For this constructed, smooth function $\psi_\epsilon^\delta(x)$, by using a basic estimate we derive in theorem \ref{lckcor3}, one has for $|x|\leq r_0$:
\begin{equation}\label{0.5(00)}
-\Delta\psi_\epsilon^\delta(x)-\vec b(x) \cdot\nabla\psi_\epsilon^\delta(x)+(2M+1)\psi_\epsilon^\delta(x)\leq M\delta \|\nabla\varphi_\epsilon(x)\|_{L^\infty(\{\varphi_\epsilon>0\})}\chi_{\{\varphi_\epsilon>0\}}^\delta(x).
\end{equation}

In addition, we need to choose a cutoff function $\eta(x)\in C_0^\infty(B_1)$ satisfying $0\leq\eta(x)\leq1$,
\begin{equation}
\eta(x)= \left \{
\begin{array} {ll}
1, \ \  |x| \leq \frac{1}{2}r_0;\\[2mm]
0, \ \ |x| \geq \frac{3}{4}r_0.
\end{array}  \right.
\end{equation}
By letting $\delta$ sufficiently small such that $\eta(x)\psi_\epsilon^\delta(x)\in \mathcal{D}(B_1\backslash\{0\})$, we can have the following estimate after testing (\ref{0.5}) by the nonnegative function $\eta(x)\psi_\epsilon^\delta(x)$
\begin{equation}\label{0.6}
\begin{array}  {rl}
0\leq&\displaystyle\int_{B_1\backslash\{0\}}\eta(x)   \psi_\epsilon^\delta(x)u(x)dx+\int_{B_1\backslash\{0\}}\eta(x)   \psi_\epsilon^\delta(x)d\mu\\
=&\displaystyle\int_{B_1\backslash\{0\}} u(x) \{(-\Delta)(\eta(x)   \psi_\epsilon^\delta(x))-\vec b(x) \nabla(\eta(x)   \psi_\epsilon^\delta(x))+(2M+1)\eta(x)   \psi_\epsilon^\delta(x)\}dx \\
=&\displaystyle\int_{B_1\backslash\{0\}} u(x)\eta(x)\{\underbrace{(-\Delta)\psi_\epsilon^\delta(x) -\vec b(x) \cdot\nabla\psi_\epsilon^\delta(x)+(2M+1) \psi_\epsilon^\delta(x)}_{{\rm by \ using}\ (\ref{0.5(00)})}\}dx \\[3.5mm]
&+\displaystyle\int_{B_1\backslash\{0\}} u(x)\underbrace{\{\psi_\epsilon^\delta(x)(-\Delta)\eta(x)-2\nabla\eta(x)\cdot\nabla\psi_\epsilon^\delta(x)-\psi_\epsilon^\delta(x)\vec b(x)\cdot\nabla \eta(x) \}}_{{\rm only\ supported\ in}\ B_{r_0}\backslash B_{\frac{r_0}{4}}\ {\rm and\ bounded}}dx\\
\leq &\displaystyle\int_{B_{r_0}\backslash\{0\}} u(x)\eta(x)M\delta \|\nabla\varphi_\epsilon(x)\|_{L^\infty(\{\varphi_\epsilon>0\})}\chi_{\{\varphi_\epsilon>0\}}^\delta(x)dx+C_1\int_{B_{r_0}\backslash B_{\frac{r_0}{4}}}u(x)dx,
\end{array}
\end{equation}
where $C_1$ is independent of $\epsilon$.

For fixed $\epsilon>0$, letting $\delta\rightarrow0$ in (\ref{0.6}), we obtain
\begin{equation}\label{0.7}
\int_{B_1\backslash\{0\}}\eta(x)   \psi_\epsilon(x)u(x)dx+\int_{B_1\backslash\{0\}}\eta(x)   \psi_\epsilon(x)d\mu
\leq C_1\int_{B_{r_0}\backslash B_{\frac{r_0}{4}}}u(x)dx:=C_2.
\end{equation}

Noticing $\psi_\epsilon(x)\nearrow(1+\sigma |x|^2)$ as $\epsilon\rightarrow0$, we can apply monotone convergence theorem and derive
\begin{equation}\label{0.8}
\begin{array}{rl}
&\displaystyle\int_{B_\frac{r_0}{2}\backslash\{0\}}u(x)dx+\mu(B_\frac{r_0}{2}\backslash\{0\})\\
\leq\!\!&\displaystyle\int_{B_1\backslash\{0\}}\eta(x)(1+\sigma |x|^2)u(x)dx+\int_{B_1\backslash\{0\}}\eta(x)(1+\sigma |x|^2)d\mu\leq C_2.
\end{array}
\end{equation}
Then (\ref{0.8})  immediately implies $u(x)\in L_{{\rm loc}}(B_1)$. Hence $\mu$, as the extension of a nonnegative Radon measure on $B_1\backslash\{0\}$, is indeed a nonnegative Radon measure on the whole ball $B_1$.

Next, we prove $u(x)$ satisfies (\ref{0.000}) in three steps, under the assumption that $n\geq3$. The case of $n=2$ is similarly derived.

\textit{Step 1}. We claim that there exists constants $a$ and $\vec{d}$, independent of the test function $\varphi(x)$, such that
\begin{equation}\label{0.9}
-\Delta u(x)+\vec b(x)\cdot\nabla u(x)+c(x)u(x)= \mu+a\delta_0(x)+(\vec{d}\cdot\nabla)\delta_0(x),\ \ {\rm in}\ \mathcal{D}'(B_1).
\end{equation}

We first rewrite  (\ref{0.4}) as
\begin{equation}\label{0.10}
-\Delta u(x)+{\rm div}(\vec b(x)u(x))=[-c(x)+{\rm div}(\vec b(x))]u(x)+\mu:=\lambda\ \ {\rm in}\ \mathcal{D}'(B_1\backslash \{0\}),
\end{equation}
where $\lambda$ is a signed Radon measure on $B_1$.
Define $\eta(x)\in  \mathcal{D}(B_1)$ to be a function  satisfying $0\leq\eta(x)\leq1$, $\eta(x)=1$ for $x\in B_{\frac{1}{2}}$ and $\eta(x)=0$ for $x\in B_{\frac{3}{4}}^c$. Let $\rho_\epsilon(x)= \eta(\frac{x}{2\epsilon})$.
For a given test function $\varphi(x)\in \mathcal{D}(B_1)$, letting $\psi(x)=\varphi(x)-[\varphi(0)+\nabla\varphi(0)\cdot x]\eta(x)\in \mathcal{D}(B_1)$, then we have
\begin{equation}\label{0.11}
\begin{array}{cl}
&\displaystyle\int_{B_1}u(x)[(-\Delta)\varphi(x)-\vec b(x)\cdot\nabla\varphi(x)]dx\\
=&\displaystyle\int_{B_1}u(x)(-\Delta-\vec b(x)\cdot\nabla)(\varphi(0)\eta(x)+x\cdot\nabla\varphi(0) \eta(x))dx\\[3.5mm]
&\displaystyle+\int_{B_1}u(x)(-\Delta-\vec b(x)\cdot\nabla)\psi(x)dx\\
=&\displaystyle\varphi(0)\underbrace{\int_{B_1}u(x)(-\Delta-\vec b(x)\cdot\nabla)\eta(x)dx}_{{\rm denoted\ as\ }a_1 }+\nabla\varphi(0)\cdot\underbrace{\int_{B_1}u(x)(-\Delta-\vec b(x)\cdot\nabla)(x\eta(x))dx}_{{\rm denoted\ as\ }\vec d_1}\\
&\displaystyle+\lim\limits_{\epsilon\rightarrow0}\int_{B_1}u(x)(-\Delta-\vec b(x)\cdot\nabla)((1-\rho_\epsilon(x))\psi(x))dx\\
&\displaystyle+\underbrace{\lim\limits_{\epsilon\rightarrow0}\int_{B_1}u(x)(-\Delta-\vec b(x)\cdot\nabla)(\rho_\epsilon(x)\psi(x))dx}_{{\rm denoted\ as}\ I}\\
=&\displaystyle \varphi(0)a_1+\nabla\varphi(0)\cdot\vec d_1
+I+\int_{B_1}\psi(x)d\lambda\\

=&\displaystyle\varphi(0)a_1+\nabla\varphi(0)\cdot\vec d_1+I+\int_{B_1}\varphi(x)d\lambda-\varphi(0)\underbrace{\int_{B_1}\eta(x)d\lambda}_{{\rm denoted\ as\ }a_2}-\nabla\varphi(0)\cdot\underbrace{ \int_{B_1}x\eta(x)d\lambda}_{{\rm denoted\ as\ }\vec d_2}\\
=&\displaystyle\varphi(0)(a_1-a_2)-\nabla\varphi(0)\cdot(\vec d_2-\vec d_1)+I+\int_{B_1}\varphi(x)d\lambda.
\end{array}
\end{equation}
Here, we should notice that $a:=a_1-a_2$ and $\vec d:=\vec d_2-\vec d_1$ are finite and independent of $\varphi(x)$. Then, it suffices to show that $I=0$.

Indeed, one derives from $\psi(x)=\varphi(x)-\eta(x)[\varphi(0)+x\cdot\nabla\varphi(0)]$  that $|\psi(x)|\leq C|x|^2$, $|\nabla \psi(x)|\leq C|x|$ and $|\Delta \psi(x)|\leq C$. Combining with the fact  that $\rho_\epsilon(x)$ is supported in $B_\epsilon$, we have
\begin{equation}
|(-\Delta-\vec b(x)\cdot\nabla)(\rho_\epsilon(x)\psi(x))|\leq C_0,
\end{equation}
and hence
 \begin{equation}\label{0.12}
|I|\leq\lim\limits_{\epsilon\rightarrow0}\int_{B_\epsilon}u(x)|(-\Delta-\vec b(x)\cdot\nabla)(\rho_\epsilon(x)\psi(x))|dx\leq C_0\lim\limits_{\epsilon\rightarrow0}\int_{B_\epsilon}u(x)dx=0
 \end{equation}
Therefore, we have completed \textit{Step 1}.

\textit{Step 2.} We prove that the vector $\vec{d}$ in (\ref{0.9}) must be zero.

First, recall the fundamental solution $\Phi(x)$ of the equation $-\Delta u=0$, i.e.
\begin{equation}
\Phi(x)=
\frac{1}{n(n-2)\omega_n}|x|^{2-n}
\end{equation}
And in this step, we let $p$ be a real number satisfying $1<p<\frac{n}{n-1}$. Since $\vec{b}(x)\in C^1(B_1)$ and $\Phi(x)\in W^{1,p}(B_1)$, the $W^{2,p}$-theory enables us to choose $g(x)\in W^{2,p}(B_1)\cap W^{1,p}_0(B_1)$ such that
\begin{equation}
-\Delta g(x)+\vec b(x)\cdot\nabla g(x)=\vec{b}(x)\cdot\nabla\Phi(x),\ \ {\rm in}\ B_1.
\end{equation}

Next, let $h(x)=\Phi(x)-g(x)$, then $h(x)\in W^{1,p}(B_1)$ and satisfies
\begin{equation}
-\Delta h(x)+\vec b(x)\cdot\nabla h(x)=\delta_0(x),\ \ {\rm in}\ \mathcal{D}'(B_1).
\end{equation}

Denoting $w(x)=u(x)-ah(x)-\vec{d}\cdot\nabla h(x)$, then after direct calculations we derive that
\begin{equation}
\label{lck001}
-\Delta w(x)+\vec b(x)\cdot\nabla w(x)=\nu, \ \ {\rm in}\ \mathcal{D}'(B_1),
\end{equation}
where $\nu$, given by $d\nu=d\mu+[-c(x)u(x)+(d^i\partial_i\vec{b}(x))\cdot\nabla h(x)]dx$, is also a Radon measure.

From the regularity estimates for (\ref{lck001}) (see  Lemma \ref{lckest2}), we know $w(x)\in W^{1,p}(B_{\frac{1}{2}})$. Then from the above estimates for $w(x)$, $h(x)$ and $g(x)$, we have
\begin{equation}\label{lck002}
u(x)-\vec{d}\cdot\nabla\Phi(x)=w(x)+ah(x)-\vec d\cdot\nabla g(x)\in L^{\frac{n}{n-1}}(B_{\frac{1}{2}}).
\end{equation}

However, assuming $\vec d\neq0$  one sees that
\begin{equation}  \label {lck003}
\| (\vec d \cdot\nabla\Phi)^-\|_{L^{\frac{n}{n-1}} (B_{\frac{1}{2}})} = +\infty.
\end{equation}

Combining (\ref{lck002}) and (\ref{lck003}), we get
\begin{equation}
\|u^-(x)\|_{L^{\frac{n}{n-1}} (B_1) }
\geq\|(\vec d \cdot\nabla\Phi)^-\|_{L^{\frac{n}{n-1}} (B_{\frac{1}{2}})}-\|u(x)-\vec d \cdot\nabla\Phi(x)\|_{L^{\frac{n}{n-1}} (B_1)}=+\infty,
\end{equation}
which clearly contradicts  to $u(x)\geq 0$.
Therefore, we must have $\vec{d}=0$:
\begin{equation}\label{2.18}
-\Delta u(x)+\vec b(x)\cdot\nabla u(x)+c(x)u(x)=\mu+a\delta_0(x),\ \ \ {\rm in}\ \mathcal{D}'(B_1).
\end{equation}

\textit{Step 3.} To prove $a\geq0$ in (\ref{2.18}), we first notice that $\tilde\mu$ defined as
\begin{equation}\tilde\mu(E):=\mu(E)-\int_{E}c(x)u(x)dx+a\chi_{E}(0)\end{equation}
 is also a Radon measure, and  Lemma (\ref{lckest2}), (\ref{2.18}) yield $u(x)\in W^{1,p}(B_{\frac{1}{2}})$ for $1<p<\frac{n}{n-1}$.
These facts enable us to define $\lambda$ as:
\begin{equation}\lambda(E):=\mu(E\cap B_{\frac{1}{2}})-\int_{E\cap B_{\frac{1}{2}}}(c(x)u(x)+\vec b(x)\cdot\nabla u(x))dx.
\end{equation}
We know $\lambda$ is a Radon measure supported in $B_{\frac{1}{2}}$ satisfying $\lambda(\{0\})=0$ and $|\lambda|(B_{\frac{1}{2}})<+\infty$.

Defining $\displaystyle v(x)=\int_{\mathbb{R}^n}\Phi(x-y)d\lambda$, one calculates
\begin{equation}
-\Delta v(x)=\lambda,\ \ \ {\rm in}\ \mathcal{D}'(B_{\frac{1}{2}}).
\end{equation}
Hence, denoting $H(x)=u(x)-v(x)-a\Phi(x)$,
we see that
\begin{equation}
-\Delta H(x)=0,\ \ \ {\rm in}\ \mathcal{D}'(B_{\frac{1}{2}}).
\end{equation}
Consequently, $H(x)\in C^\infty(B_{\frac{1}{2}}) $.

Suppose that $a<0$, then we can derive a contradiction by computing the integral of $u(x)$ in a $\delta$-ball. Indeed, it holds that
\begin{equation}\label{2.19}
\begin{array}{rl}
\displaystyle\frac{1}{\delta^2}\int_{B_{\delta }} |v(x)| dx
\leq & \displaystyle\frac{1}{\delta^2} C_1\Big\{\int_{B_{ \delta } }  \Big[\int_{B_{ 2\epsilon } }\frac{1}{|x-y|^{n-2}}d|\lambda_y|\Big]dx+\int_{B_{ \delta } }  \Big[\int_{B_{ 2\epsilon }^c }  \underbrace{\frac{1}{|x-y|^{n-2}}}_{|x-y|\geq \epsilon}d|\lambda_y|\Big]dx\Big\} \\
\leq&\displaystyle\frac{C_1} { \delta^2}  \Big\{\int_{B_{ 2\epsilon } }  \Big[\int_{B_{ \delta } }\frac{1}{|x-y|^{n-2}}dx\Big]d|\lambda_y|  + C_2\frac{|\lambda|(B_{\frac{1}{2}})}{\epsilon^{n-2}}\delta^n\Big\}.
\end{array}
\end{equation}

Noticing that
\begin{equation}
\int_{B_\delta}\frac{1}{|x-y|^{n-2}}dx\leq\int_{B_{\delta}}\frac{1}{|x|^{n-2}}dx=C_3\delta^2,
\end{equation}
we get

\begin{equation}\label{2.19(0)}
\begin{array}{rl}
\displaystyle\frac{1}{\delta^2}\int_{B_{\delta }} |v(x)| dx
\leq&\displaystyle C_4(|\lambda|(B_{2\epsilon}) +\frac{\delta^{n-2}}{\epsilon^{n-2}}).
\end{array}
\end{equation}

However, considering $a\Phi(x)$ in the $ \delta$ ball, we have
\begin{equation}\label{2.20}
\frac{1}{\delta^2}\int_{B_{\delta}}  a\Phi(x)dx=\frac{C_1}{\delta^2}a\int_{B_\delta}\frac{1}{|x|^{n-2}}dx  \leq - C_5.
\end{equation}
By choosing $\epsilon$ sufficiently small first, then letting $\delta\ll\epsilon$ be small enough, we can get
\begin{equation}\label{2.21}
C_4(|\lambda|(B_{ 2\epsilon})+\frac{\delta^{n-2}}{\epsilon^{n-2}})< \frac{C_5}{2}.
\end{equation}
From (\ref{2.19})-(\ref{2.21}), it holds that
\begin{equation}\label{2.22}
\frac{1}{\delta^2}\int_{B_{\delta}} u(x)dx
\leq\displaystyle\frac{1}{\delta^2}\int_{B_{\delta}}(|v(x)| + a \Phi(x) + |H(x)| ) dx
\leq\displaystyle -\frac {C_5}{2}+\underbrace{\frac{1}{\delta^2}\int_{B_\delta}|H(x)|dx}_{\rightarrow 0,\ {\rm as}\ \delta\rightarrow 0}.
\end{equation}

Then, for sufficiently small $\delta$, we get$
\frac{1}{\delta^2}\int_{B_{\delta}} u(x)dx<0
$,
which contradicts to $u(x)\geq0$.

In summary, we obtain the desired result: for a constant $a\geq0$,
\begin{equation*}
-\Delta u(x)+\vec b(x)\cdot\nabla u(x)+c(x)u(x)=\mu+a\delta_0(x), \ \ {\rm in}\ \mathcal{D}'(B_1).
\end{equation*}
The proof of   $n=2$ is similar.
This completes the proof of Theorem \ref{l 0.0}.
\end{proof}

{\footnotesize REMARK} \textsc{1}. {\em Here $n\geq2$ is necessary, and we have the following counterexamples for $n=1$}:
\begin{itemize}
\item$u(x)=|x|$, and $-u''(x)=-2\delta_0(x)$, i.e. $a=-2<0$, in (\ref{0.10});\\
\item$u(x)=\left\{\begin{array}{cc}
0,\ x>0,\\
1,\ x<0,\end{array}
\right.$
and $-u''(x)=\delta_0'(x)$, i.e. $d=1\neq0$, in (\ref{0.10});\\
\item$u(x)=|x|^\theta,\ 0<\theta<1$, then $-u''(x)=\theta(1-\theta)|x|^{\theta-2}$. Let $\mu:=\theta(1-\theta)|x|^{\theta-2}dx$ then $\mu(B_{\frac{1}{2}})=\infty$, i.e. $\mu$ cannot be extended as a Radon measure on $B_1$.
\end{itemize}

 \subsection{Proof of Theorem \ref{l 0.1}}

\quad\!\!\quad In this section, we prove Theorem \ref{l 0.1}, which deals with the case of fractional Laplacian\cite{caffarelli1}. For $u\in C_0^\infty(\mathbb{R}^n)$, the fractional Laplacian $(-\Delta)^su$ is given by
\begin{equation}\label{1.0}
(-\Delta)^su(x)=C_{n,s}{\rm P.V.}\int_{\mathbb{R}^n}\frac{u(x)-u(y)}{|x-y|^{n+2s}}dy,\ \ \ 0<s<1,
\end{equation}
where P.V. stands for the Cauchy principle value. Let
\begin{equation*}
\mathcal{L}_\alpha=\bigg\{u: \mathbb{R}^n\rightarrow\mathbb{R}\bigg|\int_{\mathbb{R}^n}\frac{|u(y)|}{1+|y|^{n+\alpha}}dy<+\infty\bigg\}.
\end{equation*}
It is easy to see that for $w\in \mathcal{L}_{2s}$, $(-\Delta)^sw$ as a distribution is well-defined: $\forall \varphi\in C_0^\infty(\mathbb{R}^n)$,
\begin{equation}
(-\Delta)^sw(\varphi)=\int_{\mathbb{R}^n}w(x)(-\Delta)^s\varphi(x) dx.
\end{equation}

\noindent The proof of theorem \ref{l 0.1}:
\begin{proof}
The local integrability of $u(x)$ follows immediately from the fact that $u(x)\in\mathcal{L}_{2s}$. We will first show the existence of $\mu$. As in Theorem \ref{l 0.0}, applying the Riesz-Markov-Kakutani representation theorem, we see that
 \begin{equation}\label{1.4}
 (-\Delta)^s u(x)+\vec b(x)\cdot\nabla u(x) + c(x)u(x)=\mu\ \ {\rm in}\ \mathcal{D}'(B_1\backslash\{0\}),
 \end{equation}
where $\mu$ is a positive Radon measure defined on $B_1\backslash\{0\}$. Extend $\mu$ with $\mu(\{0\})=0$, we now show that $\mu(B_{r})<+\infty$ for some $0<r<1$.

Choose a constant $\alpha$ satisfying $0<\alpha<n-2s$, and define two functions \begin{equation}\varphi_\epsilon(x):=1-\big(\frac{\epsilon}{|x|}\big)^\alpha;\end{equation} \begin{equation}\psi_\epsilon(x)=\max\{\varphi_\epsilon(x),0\}.\end{equation}
After a direct computation, we have
\begin{equation}\label{1.3}
\begin{array}{rl}
(-\Delta)^s\varphi_\epsilon(x)-\vec b(x)\cdot\nabla\varphi_\epsilon(x)+2M\varphi_\epsilon(x)\leq 2M,
\end{array}
\end{equation}
when $|x|\leq r_0<1$ for some $r_0$ depending only on $s$, $\alpha$ and $M$.

 Applying Corollary \ref{lckcor4} to $\varphi_\epsilon(|x|)$, and denoting $\psi_\epsilon^\delta(x)= J_\delta\psi_\epsilon(x)$, one  derives, for $|x|\leq r_0<\frac{1}{2}$
\begin{equation}\label{1.5}
(-\Delta)^s\psi_\epsilon^\delta(x)-\vec b(x) \cdot\nabla\psi_\epsilon^\delta(x)+2M\psi_\epsilon^\delta(x)\leq 2M+M\delta \|\nabla\varphi_\epsilon(x)\|_{L^\infty(B_\epsilon^c)}\chi_{B_\epsilon}^\delta(x).\end{equation}

Then we define a cutoff function $\eta(x)\in C_0^\infty(B_1)$ satisfying $0\leq\eta(x)\leq1$, and
\begin{equation}
\eta(x)= \left \{
\begin{array} {ll}
1, \ \  |x| \leq \frac{1}{2}r_0,\\[2mm]
0, \ \ |x| \geq \frac{3}{4}r_0.
\end{array}  \right.
\end{equation}

Letting $0<\delta<\frac{\epsilon}{2}$, then $\eta(x)\psi_\epsilon^\delta(x)\in \mathcal{D}(B_1\backslash\{0\})$. Noticing also $\eta(x)\psi_\epsilon^\delta(x)\geq0$, we then test equation (\ref{1.4}) with it to have
\begin{equation}\label{1.6}
\begin{array}{rl}
&\displaystyle \int_{B_1}\psi_\epsilon^\delta(x)\eta(x)d\mu+\int_{B_1}\psi_\epsilon^\delta(x)\eta(x)[2M-c(x)-{\rm div}(\vec b(x))]u(x)dx\\
=&\displaystyle \int_{\mathbb{R}^n}u(x)[(-\Delta)^s(\psi_\epsilon^\delta(x)\eta(x))-\vec b(x)\cdot\nabla(\psi_\epsilon^\delta(x)\eta(x))+2M(\psi_\epsilon^\delta(x)\eta(x))]dx\\
=&\displaystyle \int_{\mathbb{R}^n}u(x)\eta(x)\underbrace{[(-\Delta)^s\psi_\epsilon^\delta(x)-\vec b(x)\cdot\nabla\psi_\epsilon^\delta(x)+2M\psi_\epsilon^\delta(x)]}_{\leq 2M+M\delta \|\nabla\varphi_\epsilon(x)\|_{L^\infty(B_\epsilon^c)}\chi_{B_\epsilon}^\delta(x),\ {\rm according\ to\ (\ref{1.5})}}dx\\
&\displaystyle+\int_{\mathbb{R}^n}u(x)\psi_\epsilon^\delta(x)(-\Delta)^s\eta(x)dx-\int_{\mathbb{R}^n}u(x)\psi_\epsilon^\delta(x)\underbrace{\vec b(x)\cdot\nabla\eta(x)}_{C_0^1(B_{r_0}\backslash B_{\frac{r_0}{4}})}dx\\
&\displaystyle-\int_{\mathbb{R}^n}\int_{\mathbb{R}^n}u(x)\frac{(\eta(x)-\eta(y))(\psi_\epsilon^\delta(x)-\psi_\epsilon^\delta(y))}{|x-y|^{n+2s}}dydx\\
\leq &\displaystyle \big(2M+ M\delta \|\nabla\varphi_\epsilon(x)\|_{L^\infty(B_\epsilon^c)}\big)\int_{B_1}u(x)dx+C_1\int_{B_{r_0}\backslash B_{\frac{r_0}{4}}}u(x)dx\\
&\displaystyle+\int_{\mathbb{R}^n}u(x)\bigg[\underbrace{|\psi_\epsilon^\delta(x)(-\Delta)^s\eta(x)|}_{{\rm denoted\ as\ }K_1}
+\underbrace{\int_{\mathbb{R}^n}\frac{|(\eta(x)-\eta(y))(\psi_\epsilon^\delta(x)-\psi_\epsilon^\delta(y))|}{|x-y|^{n+2s}}dy}_{{\rm denoted\ as\ }K_2}\bigg]dx.
\end{array}
\end{equation}
We now show that $K_i\leq\frac{C}{1+|x|^{n+2s}}$ $(i=1,2)$. In fact, $K_1\leq\frac{C}{1+|x|^{n+2s}}$ follows immediately from the fact that $\eta(x)\in C_0^\infty(\mathbb{R}^n)$ and that $0\leq\psi_\epsilon^\delta(x)\leq1$. For $K_2$, we can also derive $K_2\leq\frac{C}{1+|x|^{n+2s}}$ by considering the following three cases:

Case 1: $|x|\leq \frac{r_0}{4}$. Noticing that $\eta(z)=1$ for $|z|\leq\frac{1}{2}$, we have
\begin{equation}\label{1.6(0)}
\begin{array}{rl}
K_2=\!&\displaystyle\underbrace{\int_{|x-y|\leq\frac{r_0}{4}}\!\!\!\!\frac{|(\eta(x)\!-\!\eta(y))(\psi_\epsilon^\delta(x)\!-\!\psi_\epsilon^\delta(y))|}{|x-y|^{n+2s}}dy}_{=\ 0,\ {\rm since}\ |x|\leq\frac{r_0}{4}\ {\rm and}\ |y|\leq\frac{r_0}{2},\ {\rm then}\ \eta(x)=\eta(y)=1}
+ \int_{|x-y|>\frac{r_0}{4}}\!\!\!\!\frac{|(\eta(x)\!-\!\eta(y))(\psi_\epsilon^\delta(x)\!-\!\psi_\epsilon^\delta(y))|}{|x-y|^{n+2s}}dy\\
\leq\!&\displaystyle \int_{|x-y|>\frac{r_0}{4}}\frac{1}{|x-y|^{n+2s}}dy\leq C.
\end{array}
\end{equation}

Case 2: $\frac{r_0}{4}<|x|\leq 2$. We can estimate $K_2$ as
\begin{equation}\label{1.6(1)}
\begin{array}{rl}
K_2=\!&\displaystyle\int_{|x-y|\leq\frac{r_0}{8}}\!\!\!\!\frac{|(\eta(x)\!-\!\eta(y))(\psi_\epsilon^\delta(x)\!-\!\psi_\epsilon^\delta(y))|}{|x-y|^{n+2s}}dy
+\int_{|x-y|>\frac{r_0}{8}}\!\!\!\!\frac{|(\eta(x)\!-\!\eta(y))(\psi_\epsilon^\delta(x)\!-\!\psi_\epsilon^\delta(y))|}{|x-y|^{n+2s}}dy\\
\leq\! &\displaystyle \int_{|x-y|\leq\frac{r_0}{8}}\frac{\|\nabla\eta\|_{L^\infty(B_{r_0/8})}\|\nabla\psi_\epsilon^\delta\|_{L^\infty(B_{r_0/8})}}{|x-y|^{n+2s-2}}dy
+\int_{|x-y|>\frac{r_0}{8}}\frac{1}{|x-y|^{n+2s}}dy\\
 \leq\!&\displaystyle C.
 \end{array}
\end{equation}

Case 3: $|x|>2$. $K_2$ can be estimated as
\begin{equation}\label{1.6(2)}
\begin{array}{rl}
K_2=\!&\displaystyle \int_{|y|\leq1}\!\!\!\!\frac{|(\eta(x)\!-\!\eta(y))(\psi_\epsilon^\delta(x)\!-\!\psi_\epsilon^\delta(y))|}{|x-y|^{n+2s}}dy +\int_{|y|>1}\!\!\!\!\frac{|(\eta(x)\!-\!\eta(y))(\psi_\epsilon^\delta(x)\!-\!\psi_\epsilon^\delta(y))|}{|x-y|^{n+2s}}dy\\
=\! &\displaystyle \int_{|y|\leq1}\frac{\eta(y)|(\psi_\epsilon^\delta(x)\!-\!\psi_\epsilon^\delta(y))|}{|x-y|^{n+2s}}dy +\underbrace{\int_{|y|>1}\frac{\eta(y)|(\psi_\epsilon^\delta(x)\!-\!\psi_\epsilon^\delta(y))|}{|x-y|^{n+2s}}dy}_{=\ 0,\ {\rm since}\ \eta(y)=0\ {\rm when}\ |y|>1}\\
 \leq\!&\displaystyle C|x|^{-(n+2s)}\int_{|y|\leq1}dy\leq C\frac{1}{1+|x|^{n+2s}}.
 \end{array}
\end{equation}
Thus:
\begin{equation}\label{1.6(3)}
K_i\leq C\frac{1}{1+|x|^{n+2s}},\ \ i=1,2.
\end{equation}

Combining (\ref{1.6}) and (\ref{1.6(3)}), and noticing that $u(x)\in \mathcal{L}_{2s}$, we obtain
\begin{equation}\label{1.7}
\begin{array}{rl}
&\displaystyle \int_{B_1}\psi_\epsilon^\delta(x)\eta(x)d\mu+\int_{B_1}\psi_\epsilon^\delta(x)\eta(x)[2M-c(x)-{\rm div}(\vec b(x))]u(x)dx\\
&\qquad\leq C+C\delta \|\nabla\varphi_\epsilon(x)\|_{L^\infty(B_\epsilon^c)}.
\end{array}
\end{equation}
Taking $\delta\rightarrow0$ first and then $\epsilon\rightarrow0$ in (\ref{1.7}), we can derive $\mu({B_{\frac{r_0}{2}}})<+\infty$ for some constant $r_0>0$, where we have used the facts that $\psi_\epsilon^\delta\rightarrow\psi_\epsilon$ in $C(B_1)$ as $\delta\rightarrow0$ and $\psi_\epsilon\rightarrow 1$ monotonically as $\epsilon\rightarrow0$.

In what follows, similar to the proof of theorem \ref{l 0.0}, we prove (\ref{eq111}) in three steps.

\textit{Step 1}:  We claim that there exists constants $a$ and $\vec{d}$, independent of $\varphi(x)$, such that
\begin{equation}\label{1.8}
(-\Delta)^su(x)+\vec{b}(x) \nabla u(x)+c(x)u(x)=\mu_0+a\delta_0+\vec{d}\cdot\nabla\delta_0, \ \ {\rm in}\ \mathcal{D}'(B_1).
\end{equation}

We first rewrite  (\ref{1.4}) as
\begin{equation}\label{1.10}
(-\Delta)^s u(x)+{\rm div}(\vec b(x)u(x))=[-c(x)+{\rm div}(\vec b(x))]u(x)+\mu:=\lambda\ \ {\rm in}\ \mathcal{D}'(B_1\backslash \{0\}),
\end{equation}
where $\lambda$ is a signed Radon measure on $B_1$.
Define $\eta(x)\in  \mathcal{D}(B_1)$ to be a function  satisfying $0\leq\eta(x)\leq1$, $\eta(x)=1$ for $x\in B_{\frac{1}{2}}$ and $\eta(x)=0$ for $x\in B_{\frac{1}{4}}^c$. Then let $\rho_\epsilon(x)= \eta(\frac{x}{2\epsilon})$.
For a given test function $\varphi(x)\in \mathcal{D}(B_1)$, letting $\psi(x)=\varphi(x)-(\varphi(0)+\nabla\varphi(0)\cdot x)\eta(x)\in \mathcal{D}(B_1)$, then we have
\begin{equation}\label{1.11}
\begin{array}{rl}
&\displaystyle\int_{\mathbb{R}^n}u(x)[(-\Delta)^s\varphi(x)-\vec{b}(x)\varphi(x)]dx\\
=&\displaystyle\int_{\mathbb{R}^n}u(x)[(-\Delta)^s-\vec{b}(x) \cdot\nabla][(\varphi(0)+\nabla\varphi(0)\cdot x)\eta(x)+\psi(x)]dx\\
=&\displaystyle\varphi(0)\underbrace{\int_{\mathbb{R}^n}u(x)[(-\Delta)^s-\vec{b}(x)\cdot \nabla]\eta(x)dx}_{{\rm denoted\ as}\ a_1}
+\nabla\varphi(0)\cdot\underbrace{\int_{\mathbb{R}^n}u(x)[(-\Delta)^s-\vec{b}(x)\cdot \nabla](x\eta(x))dx}_{{\rm denoted\ as\ }\vec d_1}\\
&\displaystyle+\int_{\mathbb{R}^n}u(x)[(-\Delta)^s-\vec{b}(x)\cdot \nabla]\psi(x)\\
=&\displaystyle\varphi(0)a_1+\nabla\varphi(0)\cdot\vec d_1
+\underbrace{\lim\limits_{\epsilon\rightarrow0}\int_{\mathbb{R}^n}u(x)[(-\Delta)^s-\vec{b}(x)\cdot \nabla](\rho_\epsilon(x) \psi(x))dx}_{{\rm denoted\ as\ }I}\\
&\displaystyle+\lim\limits_{\epsilon\rightarrow0}\int_{\mathbb{R}^n}u(x)[(-\Delta)^s-\vec{b}(x)\cdot \nabla][(1-\rho_\epsilon(x))\psi(x)]dx\\
=&\displaystyle\varphi(0)a_1+\nabla\varphi(0)\cdot\vec d_1+I+\int_{\mathbb{R}^n}\psi(x)d\lambda\\
=&\displaystyle\varphi(0)a_1+\nabla\varphi(0)\cdot\vec d_1+I+\int_{\mathbb{R}^n}\varphi(x)d\lambda-\varphi(0)\underbrace{\int_{\mathbb{R}^n}\eta(x)d\lambda}_{{\rm denoted\ as\ }a_2}
-\nabla\varphi(0)\cdot\underbrace{\int_{\mathbb{R}^n}x\eta(x)d\lambda}_{{\rm denoted\ as\ }\vec d_2}\\
=&\displaystyle\varphi(0)(a_1-a_2)-\nabla\varphi(0)\cdot(\vec{d_2}-\vec d_1)+I+\int_{\mathbb{R}^n}\varphi(x)d\lambda.
\end{array}
\end{equation}
Here, we should notice that $a:=a_1-a_2$ and $\vec d:=\vec d_2-\vec d_1$ are finite and independent of $\varphi(x)$. Then, it suffices to show that
$I=0$.
We now write $I$ as
\begin{equation}
I=\lim\limits_{\epsilon\rightarrow0}\int_{\mathbb{R}^n}u(x)(-\Delta)^s(\rho_\epsilon(x) \psi(x))dx-\lim\limits_{\epsilon\rightarrow0}\int_{B_1}u(x)\vec{b}(x)\cdot \nabla(\rho_\epsilon(x) \psi(x))dx=:I_1+I_2.
\end{equation}

With an estimation similar to those in the proof of (\ref{0.12}), we derives $I_2=0$. Thus, next we focus on $I_1$.

One derives from $\psi(x)=\varphi(x)-\eta(x)(\varphi(0)+x\cdot\nabla\varphi(0)$  that $|\psi(x)|\leq C|x|^2$, $|\nabla \psi(x)|\leq C|x|$ and $|\partial_{ij} \psi(x)|\leq C$. Combining with the fact  that $\rho_\epsilon(x)$ is supported in $B_\epsilon$, we have
\begin{equation}
\begin{array}{rl}
|\nabla( \rho_\epsilon(x)\psi(x))| \leq&\displaystyle |\nabla\rho_\epsilon(x)\psi(x)|+|\rho_\epsilon(x)\nabla \psi(x)|\\
\leq& C\big[\frac{1}{\epsilon}|\nabla\rho(\frac{x}{\epsilon})||x|^2+\rho(\frac{x}{\epsilon})|x|\big]\leq C\epsilon,
\end{array}
\end{equation}
\begin{equation}
\begin{array}{rl}
&\displaystyle|\partial_{ij}( \rho_\epsilon(x)\psi(x))| \\
\leq&\displaystyle |\partial_{i}\rho_\epsilon(x)\partial_{j}\psi(x)|+|\partial_{j}\rho_\epsilon(x)\partial_{i}\psi(x)|+|\partial_{ij}\rho_\epsilon(x) \psi(x)|+|\partial_{ij}\psi(x)\rho_\epsilon(x)|\\
\leq& C\big[\frac{1}{\epsilon}|\partial_i\rho(\frac{x}{\epsilon})||x|+\frac{1}{\epsilon}|\partial_j\rho(\frac{x}{\epsilon})||x|+\frac{1}{\epsilon^2}\partial_{ij}\rho(\frac{x}{\epsilon})|x|^2
+\rho(\frac{x}{\epsilon})\big]\leq C.
\end{array}
\end{equation}

We will show that
\begin{equation}\label{1.21}
|(-\Delta)^{s} ( \rho_{\epsilon}(x)\psi(x))|\leq  \frac{C(\epsilon^{2s}+\epsilon^{2-2s})}{1+|x|^{n+2s}}\end{equation}
by considering the following two cases.

Case 1: $|x|\leq2$.
By the definition of the fractional Laplacian, for $x\in B_{2\epsilon}$, we have
\begin{equation}\label{3.33}
\begin{array} { rl}
&(-\Delta)^{s} ( \rho_{\epsilon}(x)\psi(x))\\[1mm]
=&\displaystyle C_{n,s}\lim\limits_{\delta\rightarrow0}\int_{\mathbb{R}^n\backslash B_\delta}  \frac{\rho_{\epsilon} (x)\psi(x)-\rho_{\epsilon}(x+y)\psi(x+y)}{ |y|^{n+2s} }dy\\
=&\displaystyle \frac{C_{n,s}}{2}\underbrace{\lim\limits_{\delta\rightarrow0}\int_{B_{4\epsilon}\backslash B_\delta}  \frac{2\rho_{\epsilon} (x)\psi(x)-\rho_{\epsilon}(x+y)\psi(x+y)-\rho_{\epsilon}(x-y)\psi(x-y)}{ |y|^{n+2s} }dy}_{{\rm denoted\ as\ }K_1}\\
&+\ C_{n,s}\displaystyle\underbrace{\int_{B_{2\epsilon}^{c}} \frac{\rho_{\epsilon} (x)\psi(x)-\rho_{\epsilon}(x+y)\psi(x+y)}{ |y|^{n+2s} }dy}_{{\rm denoted\ as\ }K_2}
\end{array}
\end{equation}

By using the Taylor expansion, one can observe
\begin{equation}
\begin{array}{ rl}
&|2\rho_{\epsilon}(x)\psi(x)-\rho_{\epsilon}(x+y) \psi(x+y)-\rho_{\epsilon}(x-y) \psi(x-y)|\\[1mm]
\leq&C|\nabla^2( \rho_\epsilon(\xi)\psi(\xi))| |y|^2\leq C|y|^2.
\end{array}
\end{equation}

Hence
\begin{equation} \label{1.12}
|K_1|\leq C\int_{B_{2\epsilon}}  \frac {|y|^{2}} { |y| ^{n+2s}}dy \leq  C \epsilon^{2-2s}.
\end{equation}

For $K_2$, we have
\begin{equation}
|\rho_{\epsilon}(x)\psi(x)-\rho_{\epsilon}(x+y) \psi(x+y)|
\leq C|\nabla( \rho_\epsilon(\xi)\psi(\xi))| |y|\leq C\epsilon|y|,
\end{equation}
and hence, for $R>4$
\begin{equation}
\begin{array}{rl}
|K_2|\!\!\!\!&\displaystyle=\bigg|\int_{B_R^c}\frac{\rho_{\epsilon} (x)\psi(x)}{ |y|^{n+2s} }dy+\int_{B_R\backslash B_\epsilon} \frac{\rho_{\epsilon} (x)\psi(x)-\rho_{\epsilon}(x+y)\psi(x+y)}{ |y|^{n+2s} }dy\bigg|\\
&\displaystyle\leq \int_{B_R^c}\frac{|\rho_\epsilon(x)\psi(x)|}{ |y|^{n+2s} }dy+\int_{B_R\backslash B_\epsilon} \frac{|\rho_{\epsilon} (x)\psi(x)-\rho_{\epsilon}(x+y)\psi(x+y)|}{ |y|^{n+2s} }dy\\
&\displaystyle\leq \int_{B_R^c}\frac{1}{ |y|^{n+2s} }dy+C\epsilon\int_{B_R\backslash B_\epsilon}\frac{1}{|y|^{n+2s-1}}dy\\
&\displaystyle\leq  C_1\frac{1}{R^{2s}}+C_2\epsilon\Big|\frac{1}{R^{2s-1}}-\frac{1}{\epsilon^{2s-1}}\Big|\\
&\displaystyle\leq C\Big(\frac{1}{R^{2s}}+\frac{\epsilon}{R^{2s-1}}+\epsilon^{2-2s} \Big).
\end{array}
\end{equation}
Let $R=\epsilon^{-1}$, we get
\begin{equation}
 \label{1.13}
 |K_2|\leq C(\epsilon^{2s}+\epsilon^{2-2s}).
 \end{equation}

Combining (\ref{1.12}) and (\ref{1.13}), we obtain
\begin{equation} \label{1.14}
| (-\Delta)^{s} ( \rho_\epsilon(x)\psi(x))| \leq C(\epsilon^{2s}+\epsilon^{2-2s}).
\end{equation}

Case 2: $|x|>2$. Since $\psi(x)$ is compactly supported in $B_1$
\begin{equation}
\begin{array}{rl} \label{1.15}
|(-\Delta)^{s} ( \rho_{\epsilon}(x)\psi(x))|
=&C_{n,s}\displaystyle\bigg|\int_{B_1}\frac{\rho_\epsilon(y)\psi(y)}{|x-y|^{n+2s}}dy\bigg|
\leq C_{n,s}\displaystyle\int_{B_1}\frac{|\rho_\epsilon(y)\psi(y)|}{|x-y|^{n+2s}}dy\\
\leq&\displaystyle \frac{C}{1+|x|^{n+2s}}\int_{B_1}|\rho_\epsilon(y)\psi(y)|dy\leq \frac{C\epsilon}{1+|x|^{n+2s}},
\end{array}
\end{equation}
where we used the fact that $|x-y|\approx(|x|+1)$, for $|x|>2$ and $|y|\leq1$.

This completes the proof of (\ref{1.21}),
 and hence
\begin{equation}
I_1=\lim\limits_{\epsilon\rightarrow0}\int_{\mathbb{R}^n}u(x)(-\Delta)^s(\rho_\epsilon(x) \psi(x))dx =0.
\end{equation}
Thus \textit{Step 1} is completed.

\textit{Step 2.} We prove $\vec{d}=0$ (see \ref{1.8}). Let \begin{equation}\Phi(x)=\frac{C_{n,s}}{|x|^{n-2s}}\end{equation} be the fundamental solution of the fractional Laplacian $(-\Delta)^s$, and $p$ be a real number satisfying $1<p<\frac{n}{n-2s+1}$. Since $\vec b(x)\in C^1(B_1)$ and $\Phi(x)\in W^{1,p}(B_1)$, the basic potential theory  enables us to choose  $g\in\mathcal{L}_{2s}$ satisfying:
\begin{equation}\label{1.16}
(-\Delta)^s g(x)+\vec b(x)\cdot\nabla g(x)=\vec{b}(x)\cdot\nabla\Phi(x)\chi_{B_1}(x)\ \ {\rm in}\ \ \mathcal{D}'(\mathbb{R}^n),
\end{equation}
with $(-\Delta)^sg\in L^p(\mathbb{R}^n)$ (here $s>\frac{1}{2}$) and $g\in W^{1,p}(B_1)$.
Further, let $h=\Phi-g$, then $h\in W^{1,p}(B_1)$ and
\begin{equation}\label{1.17}
(-\Delta)^s h(x)+\vec b(x)\cdot\nabla h(x)=\delta_0(x),\ \ {\rm in}\ \ \mathcal{D}'(B_1).
\end{equation}

Letting $w(x)=u(x)-ah(x)-\vec d\cdot\nabla h(x)$, by a direct calculation, we derive

\begin{equation}\label{1.17(0)}
(-\Delta)^s w(x)+\vec b(x)\cdot\nabla w(x)=\mu-c(x)u(x)+(d^i\partial_i\vec b(x)),\ \ {\rm in}\ \ \mathcal{D}'(B_1).
\end{equation}

By using the estimate we derived  in Lemma \ref{l 8.2} for the equation (\ref{1.17(0)}),  we know $(-\Delta)^sw(x)\in L^p_{\rm loc}(B_1)$. From the above regularity estimates for $h(x)$ and $g(x)$, we have
\begin{equation}\label{1.17(1)}
u(x)-\vec{d}\cdot\nabla\Phi(x)=w(x)+ah(x)-\vec d\cdot\nabla g(x)\in L^{\frac{n}{n-2s+1}}(B_{\frac{1}{2}}).
\end{equation}

Assume $\vec d\neq0$ now, then one sees that
\begin{equation}  \label {1.17(2)}
\| (\vec d \cdot\nabla\Phi)^-\|_{L^{\frac{n}{n-2s+1}} (B_{\frac{1}{2}})} = +\infty.
\end{equation}

Thus, by combining (\ref{1.17(1)}) and (\ref{1.17(2)}), we obtain
\begin{equation}
\|u^-\|_{L^{\frac{n}{n-2s+1}} (B_1) }
\geq\|(\vec d \cdot\nabla\Phi)^-\|_{L^{\frac{n}{n-2s+1}} (B_{\frac{1}{2}})}-\|u-\vec d \cdot\nabla\Phi\|_{L^{\frac{n}{n-2s+1}} (B_1)}=+\infty,
\end{equation}
which clearly contradicts to $u(x)\geq0$.
Consequently, $\vec{d}=0$, i.e.,
\begin{equation}\label{1.22}
(-\Delta)^s u(x)+\vec b(x)\cdot\nabla u(x)+c(x)u(x)=\mu+a\delta_0(x),\ \ \ {\rm in}\ \mathcal{D}'(B_1).
\end{equation}

\textit{Step 3.} To prove $a\geq0$ in (\ref{1.22}), we first notice that $\tilde\mu$ defined as \begin{equation}\tilde\mu(E):=\mu(E)-\int_{E}c(x)u(x)dx+a\chi_{E}(0)\end{equation}
is also a Radon measure, and Lemma \ref{l 8.1} together with (\ref{1.22}) gives $u(x)\in W^{1,p}(B_{\frac{1}{2}})$ for $1<p<\frac{n}{n-2s+1}$.
These facts enable us to define $\lambda$ as:
\begin{equation}\lambda(E):=\mu(E\cap B_{\frac{1}{2}})-\int_{E\cap B_{\frac{1}{2}}}(c(x)u(x)+\vec b(x)\cdot\nabla u(x))dx.
\end{equation}
We know $\lambda$ is a Radon measure supported in $B_{\frac{1}{2}}$ satisfying $\lambda(\{0\})=0$ and $|\lambda|(B_{\frac{1}{2}})<+\infty$.

Defining $\displaystyle v(x)=\int_{\mathbb{R}^n}\Phi(x-y)d\lambda_y$, then one easily calculates
\begin{equation}
(-\Delta)^s v(x)=\mu-(c(x)u(x)+\vec b(x)\cdot\nabla u(x)),\ \ \ {\rm in}\ \mathcal{D}'(B_{\frac{1}{2}}).
\end{equation}
Then, denoting $H(x)=u(x)-v(x)-a\Phi(x)$, we know
\begin{equation}
(-\Delta)^s H(x)=0,\ \ \ {\rm in}\ \mathcal{D}'(B_{\frac{1}{2}}),
\end{equation}
and hence $H(x)\in C^\infty(B_{\frac{1}{2}})$.

Suppose that $a<0$, then we will derive a contradiction by computing the integral of $u(x)$ in a $\delta$-ball. In particular, we have
\begin{equation}\label{1.23}
\begin{array}{rl}
\displaystyle\frac{1}{\delta^{2s}}\int_{B_{\delta }} |v(x)| dx
\leq & \displaystyle\frac{1}{\delta^{2s}}  \Big\{\int_{B_{ \delta } }  \Big[\int_{B_{ 2\epsilon } }\frac{1}{|x-y|^{n-2s}}d|\lambda_y|\Big]dx+\int_{B_{ \delta } }  \Big[\int_{B_{ 2\epsilon }^c }  \underbrace{\frac{1}{|x-y|^{n-2s}}}_{|x-y|\geq \epsilon}d|\lambda_y|\Big]dx\Big\} \\
\leq&\displaystyle\frac{C_1} { \delta^{2s}}  \Big\{\int_{B_{ 2\epsilon } }  \Big[\int_{B_{ \delta } }\frac{1}{|x-y|^{n-2s}}dx\Big]d|\lambda_y|  + C_2\frac{|\lambda|(B_{\frac{1}{2}})}{\epsilon^{n-2s}}\delta^n\Big\}.
\end{array}
\end{equation}

Noticing that
\begin{equation}
\int_{B_\delta}\frac{1}{|x-y|^{n-2s}}dx\leq\int_{B_{2\delta}}\frac{1}{|x|^{n-2s}}dx=C_3\delta^{2s},
\end{equation}
we get

\begin{equation}\label{1.23(0)}
\begin{array}{rl}
\displaystyle\frac{1}{\delta^{2s}}\int_{B_{\delta }} |v(x)| dx
\leq&\displaystyle C_4(|\lambda|(B_{2\epsilon}) +\frac{\delta^{n-2s}}{\epsilon^{n-2s}}).
\end{array}
\end{equation}

However, considering $a\Phi(x)$ in the $ \delta$ ball, we have
\begin{equation}\label{1.24}
\frac{1}{\delta^{2s}}\int_{B_{\delta}}  a\Phi(x)dx=\frac{C_1}{\delta^{2s}}a\int_{B_\delta}\frac{1}{|x|^{n-2s}}dx  \leq - C_5.
\end{equation}
By choosing $\epsilon$ sufficiently small first and then letting $\delta$ be small enough, we can get
\begin{equation}\label{1.25}
C_4(|\lambda|(B_{ 2\epsilon})+\frac{\delta^{n-2s}}{\epsilon^{n-2s}})< \frac{C_5}{2}.
\end{equation}
Then from (\ref{1.23})-(\ref{1.25}), we have
\begin{equation}\label{1.26}
\frac{1}{\delta^{2s}}\int_{B_{\delta}} u(x)dx
\leq\displaystyle\frac{1}{\delta^{2s}}\int_{B_{\delta}}(|v(x)| + a \Phi(x) + |H(x)| ) dx
\leq\displaystyle -\frac {C_5}{2}+\underbrace{\frac{1}{\delta^{2s}}\int_{B_\delta}|H(x)|dx}_{\rightarrow 0,\ {\rm as}\ \delta\rightarrow 0}.
\end{equation}

Then, for sufficiently small $\delta$, we get \begin{equation}\label{1.27}
\frac{1}{\delta^{2s}}\int_{B_{\delta}} u(x)dx<0,
\end{equation}
which contradicts to $u(x)\geq 0$.

Consequently,   $a\geq0$ and:
\begin{equation*}
(-\Delta)^s u(x)+\vec b(x)\cdot\nabla u(x)+c(x)u(x)=\mu+a\delta_0(x),\ \ {\rm in}\ \ \mathcal{D}'(B_1).
\end{equation*}

This completes the proof of Theorem \ref{l 0.1}.
\end{proof}
\subsection{Two more adaptable B\^{o}cher type theorems}

The following two more adaptable  B\^{o}cher type theorems are direct corollaries of Theorem (\ref{l 0.0}) and Theorem (\ref{l 0.1})
 \begin{theorem}
Let $u(x),f(x)\in L_{{\rm loc}}^1(B_1\backslash \{0\})$ be a nonnegative function in $\mathbb{R}^n$ $(n\geq2)$ satisfying
\begin{equation}
-\Delta u(x)+\vec b(x)\cdot\nabla u(x) + c(x)u(x)=f(x)\geq0\ \ {\rm in}\ \mathcal{D}'(B_1\backslash \{0\}),
\end{equation}
where $\|\vec b(x)\|_{C^1(B_1)}+\|c(x)\|_{L^\infty(B_1)}\leq M$ for some constant $M$, then $u(x), f(x)\in L_{{\rm loc}}^1(B_1)$ and

\begin{equation}
-\Delta u(x)+\vec b(x)\cdot\nabla u(x) + c(x)u(x)=f(x)+a\delta_0(x)\ \ {\rm in}\ \mathcal{D}'(B_1),
\end{equation}
for some $a\geq0$.
\end{theorem}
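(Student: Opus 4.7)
The plan is to deduce this statement directly from Theorem \ref{l 0.0} by a simple identification of the resulting Radon measure. Since $f(x)\geq 0$ in $\mathcal{D}'(B_1\backslash\{0\})$, the equation assumed here implies in particular the inequality
\[
-\Delta u(x)+\vec b(x)\cdot\nabla u(x)+c(x)u(x)\geq 0\ \ {\rm in}\ \mathcal{D}'(B_1\backslash\{0\}),
\]
so the hypotheses of Theorem \ref{l 0.0} are satisfied. Applying that theorem gives $u(x)\in L^1_{{\rm loc}}(B_1)$ and the distributional identity
\[
-\Delta u(x)+\vec b(x)\cdot\nabla u(x)+c(x)u(x)=\mu+a\delta_0(x)\ \ {\rm in}\ \mathcal{D}'(B_1),
\]
for some $a\geq 0$ and some nonnegative Radon measure $\mu$ on $B_1$ with $\mu(\{0\})=0$.

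Next I would identify $\mu$ with the function $f$ outside the origin. Restricting both sides to $\mathcal{D}(B_1\backslash\{0\})$ and comparing with the given equation, we obtain
\[
\int_{B_1\backslash\{0\}}\varphi(x)\,d\mu=\int_{B_1\backslash\{0\}}f(x)\varphi(x)\,dx
\]
for every $\varphi\in \mathcal{D}(B_1\backslash\{0\})$. By the uniqueness part of the Riesz--Markov--Kakutani representation on the locally compact Hausdorff space $B_1\backslash\{0\}$, this forces $\mu=f(x)\,dx$ as Radon measures on $B_1\backslash\{0\}$. In particular, $f\,dx$ extends to the nonnegative Radon measure $\mu$ on all of $B_1$, which is exactly the statement that $f\in L^1_{{\rm loc}}(B_1)$.

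Combining, since $\mu(\{0\})=0$ and Lebesgue measure also assigns zero to $\{0\}$, the equality $\mu=f(x)\,dx$ holds on all of $B_1$, and substituting back yields
\[
-\Delta u(x)+\vec b(x)\cdot\nabla u(x)+c(x)u(x)=f(x)+a\delta_0(x)\ \ {\rm in}\ \mathcal{D}'(B_1),
\]
with $a\geq 0$. There is no substantive obstacle here: the only small point that deserves care is checking that the nonnegative Radon measure $\mu$ produced by Theorem \ref{l 0.0} must agree with $f\,dx$ as Radon measures (not merely as distributions) on the punctured ball, which is why I invoke the uniqueness in Riesz--Markov--Kakutani rather than arguing only at the level of distributions.
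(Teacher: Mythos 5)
Your proof is correct and is exactly the intended "direct corollary" route the paper alludes to when it says these statements follow from Theorem \ref{l 0.0}: apply that theorem using $f\geq 0$ to get the inequality hypothesis, then identify the Radon measure $\mu$ with $f\,dx$ on the punctured ball via uniqueness in Riesz--Markov--Kakutani, which simultaneously yields $f\in L^1_{\rm loc}(B_1)$ and the stated equation.
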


 \begin{theorem}
Let $u(x)\in \mathcal{L}_{2s}$, $f(x)\in L_{\rm loc}^1(B_1\backslash\{0\})$ with $s\in(\frac{1}{2},1)$ be a nonnegative function in $\mathbb{R}^n$ ($n\geq2$) satisfying

\begin{equation}
(-\Delta)^s u(x)+\vec b(x)\cdot\nabla u(x) + c(x)u(x)=f(x)\geq0\ \ {\rm in}\ \ \mathcal{D}'(B_1\backslash \{0\}),
\end{equation}
where $\|\vec b(x)\|_{C^1(B_1)}+\|c(x)\|_{L^\infty(B_1)}\leq M$ for some constant $M$,  then $u(x), f(x)\in L_{{\rm loc}}^1(B_1)$ and

\begin{equation}
(-\Delta)^s u(x)+\vec b(x)\cdot\nabla u(x) + c(x)u(x)=f(x)+a\delta_0(x)\ \ {\rm in}\ \mathcal{D}'(B_1),
\end{equation}
for some $a\geq0$.

Besides, when $\vec{b}(x)\equiv0$, then the theorem holds for $s\in(0,1)$.
\end{theorem}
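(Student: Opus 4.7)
The plan is to deduce this theorem as a direct corollary of Theorem \ref{l 0.1}, which we have already established in the inequality form. Since the hypothesis says $(-\Delta)^s u + \vec b \cdot \nabla u + c u = f \geq 0$ in $\mathcal{D}'(B_1 \setminus \{0\})$, the nonnegativity assumption needed for Theorem \ref{l 0.1} is automatic. The first step is to invoke that theorem to produce a constant $a \geq 0$ and a nonnegative Radon measure $\mu$ on $B_1$ with $\mu(\{0\})=0$ for which
\begin{equation*}
(-\Delta)^s u + \vec b \cdot \nabla u + c u = \mu + a\delta_0 \quad \text{in } \mathcal{D}'(B_1).
\end{equation*}

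The second step, which is really the only thing to prove, is to identify $\mu$ with the absolutely continuous measure $f(x)\,dx$ on $B_1$. I would first work on the punctured ball $B_1 \setminus \{0\}$, where the $a\delta_0$ term is inert: the original hypothesis and the representation just obtained present the same distribution in two ways, namely as $\varphi \mapsto \int f\varphi\,dx$ and as $\varphi \mapsto \int \varphi \, d\mu$ respectively. Uniqueness in the Riesz--Markov--Kakutani representation then forces the equality of these two Radon measures on $B_1 \setminus \{0\}$. For any $0<r<1$ this yields
\begin{equation*}
\int_{B_r \setminus \{0\}} f(x)\,dx = \mu\bigl(B_r \setminus \{0\}\bigr) = \mu(B_r) < \infty,
\end{equation*}
since $\mu$ is Radon on $B_1$ and has no atom at the origin; this shows in particular that $f \in L^1_{\rm loc}(B_1)$. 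Because $\mu(\{0\})=0$, the two measures $\mu$ and $f(x)\,dx$ agree on every Borel subset of $B_1$, hence coincide as Radon measures on the whole ball.

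Substituting $\mu = f(x)\,dx$ into the representation from step one then produces the desired
\begin{equation*}
(-\Delta)^s u + \vec b \cdot \nabla u + c u = f + a\delta_0 \quad \text{in } \mathcal{D}'(B_1).
\end{equation*}
The final case $\vec b \equiv 0$ with $s \in (0,1)$ is handled in exactly the same way, invoking the extended clause of Theorem \ref{l 0.1} in place of its main assertion. I do not expect any significant obstacle here: the substantive analytic content is entirely contained in Theorem \ref{l 0.1}, and what remains is the essentially measure-theoretic identification of $\mu$ with $f\,dx$ sketched above, together with the observation that $\mu(\{0\})=0$ allows this identification to be extended from the punctured ball to all of $B_1$.
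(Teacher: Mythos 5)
Your proposal is correct and is exactly the intended argument: the paper itself labels this result a direct corollary of Theorem \ref{l 0.1} without supplying a proof, and what you have written---apply Theorem \ref{l 0.1} to obtain $\mu + a\delta_0$, observe that on $B_1\setminus\{0\}$ the distributional equation identifies $\mu$ with $f\,dx$ by uniqueness in the Riesz representation, and then use $\mu(\{0\})=0$ together with the finiteness of $\mu$ on compacta to deduce $f\in L^1_{\rm loc}(B_1)$ and extend the identification to all of $B_1$---is the natural and complete way to fill that in. The $\vec b\equiv 0$, $s\in(0,1)$ case follows identically from the corresponding clause of Theorem \ref{l 0.1}, as you note.
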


\section{Maximum principles}\label{3}

\quad\!\!\quad In this section, we  prove Theorem \ref{l 0.2} Theorem \ref{l 1.2} and Theorem \ref{l 0.2}. These  maximum principles can be derived easily if we apply the B\^{o}cher type theorems.

\subsection{Proof of Theorem \ref{l 0.2}}
\begin{proof}

Without loss of generality, we may assume $c(x)\equiv M$, $r=1$ and $x_0=0$.

Applying Theorem \ref{l 0.0}, we know $ u(x)\in L_{{\rm loc}}^1(B_1)$, and
\begin{equation}
-\Delta u(x)+\vec{b}(x)\cdot\nabla u(x)+Mu(x)=\mu+a\delta_0\geq0,\ \ {\rm in}\ \ \mathcal{D}' (B_1).
\end{equation}
Mollify this equation, we have
\begin{equation}
-\Delta J_\delta u(x)+J_\delta (\vec b\cdot\nabla u)(x)+MJ_\delta u(x)\geq0\ \ {\rm in}\ B_{1-\delta}.
\end{equation}
For $0<\delta<\frac{1}{4}$, denoting $u^\delta(x)=J_\delta u(x)$ and $N_\delta(x)=\vec b(x)\cdot \nabla J_\delta u(x)-J_\delta (\vec b\cdot \nabla u)(x)$, we know:
\begin{equation}
\left\{\ \begin{aligned}
&\!-\Delta u^\delta(x)+\vec{b}(x)\cdot\nabla u^\delta(x)+Mu^\delta(x)\geq N_\delta(x),\ \ &{\rm in}\ \ &B_{1-\delta}\\
&u(x)\geq m,\ \ &{\rm on}\ \ &\partial B_{1-\delta}.
\end{aligned}
\right.
\end{equation}

Let $v^\delta(x)$ be a solution of the following problem:
\begin{equation}
\left\{\ \begin{aligned}
&\!-\Delta v^\delta(x)+\vec{b}(x)\cdot\nabla v^\delta(x)+Mv^\delta(x)=N_\delta(x),\ \ &{\rm in}\ \ &B_{1-\delta}\\
&v(x)=0,\ \ &{\rm on}\ \ &\partial B_{1-\delta}.
\end{aligned}\right.
\end{equation}
Applying the standard maximum principle to $u^\delta-v^\delta$ on $B_{1-\delta}$, we know $u^\delta-v^\delta\geq Cm$ in $B_{1-\delta}$, where $C=C(n,M)$ is a positive constant. To derive (\ref{6.3}) we should notice that $C(n,0)=1$.

According to Lemma \ref{lcklem3}, $N_\delta(x)\rightarrow0$ in $L^1_{\rm loc}(B_1)$ as $\delta\rightarrow 0$, hence $v^\delta(x)\rightarrow 0$ in $W^{1,p}_{\rm loc}(B_r)$ as $\delta\rightarrow0$ for given $0<r<1$ and $1<p<\frac{n}{n-1}$. Now for any given $\varphi(x)\in\mathcal{D}(B_1)$ with $\varphi(x)\geq0$, say ${\rm supp}( \varphi ) \subset B_r$ for some $r<1$,  we have
\begin{equation}\label{6.4}
\int_{B_r}(u^\delta(x)-v^\delta(x))\varphi(x)dx\geq Cm\int_{B_r}\varphi(x)dx.
\end{equation}
Here, it's clear that $\int_{B_r}u^\delta(x)\varphi(x)dx\rightarrow \int_{B_r}u(x)\varphi(x)dx$ and $\int_{B_r}v^\delta(x)\varphi(x)dx\rightarrow 0$ as $\delta\rightarrow0$. Hence, letting $\delta\rightarrow0$ in (\ref{6.4}), we get
\begin{equation}
\int_{B_1}u(x)\varphi(x)dx\geq Cm\int_{B_1}\varphi(x)dx,
\end{equation}
which immediately implies (\ref{6.2}).
\end{proof}

{\footnotesize REMARK} \textsc{5}. {\em Theorem \ref{l 0.2} does not hold when $n=1$. In fact, for the special case $c(x)\equiv0$, the function $v(x)=|x|$ satisfies (\ref{6.1}) but obviously does not satisfy (\ref{6.2})}.

\subsection{Proof of Theorem \ref{l 1.3}}

\begin{proof}Define:
\begin{gather*}
M_1:=\int_{B_{\frac{7}{8}}^c}\frac{C_{n,s}}{(|y|-\frac{5}{8})^{n+2s}}dy<\infty,\qquad M_2:=\int_{B_{\frac{7}{8}}\backslash B_{\frac{5}{8}} }\frac{C_{n,s}}{|y|^{n+2s}}dy>0.
\end{gather*}
Then there exists a constant: $\sigma\in(0,1)$ such that:  $\displaystyle\int_{B_1\backslash B_\sigma}|\nabla j(y)|dy\leq\frac{M_2}{2M}.$\\
Further, for such fixed $\sigma$, we define: $\displaystyle M_3:=\sup_{y\in B_\sigma}\frac{|\nabla j(y)|}{j(y)}<\infty.$

We prove that $u(x)\geq \alpha m$ with
$\displaystyle\alpha=\frac{M_2}{2(M_1+M_2+2M+MM_3)}$.

Without loss of generality, we assume $r=1$ and $x_0=0$.

Let $v(x)=\min\{u(x),m\}$, from Lemma \ref{lckcor2} one derives:
\begin{equation}
(-\Delta)^s v(x)+\vec{b}(x)\cdot\nabla v(x)+Mv(x)\geq0,\ \ {\rm in}\ \ \mathcal{D}' (B_1).
\end{equation}
Consider the mollified function $v^\delta(x)=J_\delta v(x)$  with $0<\delta\leq\frac{1}{8}$, we have:
\begin{equation}
\left\{\ \begin{aligned}
&(-\Delta)^sv^\delta(x)+J_\delta(\vec{b}\cdot\nabla v)(x)+Mv^\delta(x)\geq 0,\ &{\rm in}\ \  &B_{\frac{7}{8}},\\
&v^\delta(x)=m>0,\ &{\rm in}\ \ &B_{\frac{7}{8}}\backslash B_{\frac{5}{8}},\\
\end{aligned}
\right.
\end{equation}

We need to prove that:
\begin{equation}
v^\delta(\bar{x})=\min\big\{v^\delta(x)\big|x\in\overline{B_{\frac{7}{8}}}\big\}\geq\alpha m.\end{equation}
Because $v^\delta(x)=m\geq v^\delta(y)$ for all $x\in B_{\frac{7}{8}}\backslash B_{\frac{5}{8}}$ and $y\in B_{\frac{7}{8}}$, we only need to consider the case $\bar{x}\in B_{\frac{5}{8}}$.
Then $\bar{x}$ is an interior minimal point, thus $\nabla v^\delta(\bar{x})=0$. We calculate:
\begin{equation}
\begin{aligned}
J_\delta(\vec b\cdot\nabla v)(\bar x)&=J_\delta(\vec b\cdot\nabla v)(\bar x)-\vec b(\bar x)\cdot\nabla v^\delta(\bar x)\\
&=\int_{\mathbb{R}^n}v(y)[(\vec b(\bar x)-\vec b(y))\cdot\nabla_yj_\delta(\bar x-y)-(\nabla\cdot\vec b)(y)j_\delta(\bar x-y)]\ dy\\
&\leq \sum_{i,j}\|\partial_ib_j\|_{L^{\infty}(B_1)}\int_{\mathbb{R}^n}v(y)[\delta|\nabla j_\delta(\bar x-y)|+j_\delta(\bar x-y)]\ dy\\
&\leq M\bigg[v^\delta(\bar x)+\underbrace{\int_{B_{\delta}(\bar x)}v(y)\delta|\nabla j_\delta(\bar x-y)|dy}_{{\rm denote\ as}\ I}\bigg].
\end{aligned}
\end{equation}

We can then estimate $I$:
\begin{equation}\label{lckmp2.2}\begin{aligned}
I&=\int_{B_{\delta}(\bar x)}v(y)\delta|\nabla j_\delta(\bar x-y)|dy\\
&\leq\int_{B_{\sigma\delta}(\bar x)}v(y)\delta|\nabla j_\delta(\bar x-y)|dy+m\int_{B_{\delta}(\bar x)\backslash B_{\sigma\delta}(\bar x)}\delta|\nabla j_\delta(\bar x-y)|dy\\
&\leq M_3\int_{B_{\sigma\delta}(\bar x)}v(y) j_\delta(\bar x-y) dy+\frac{M_2m}{2M}\leq M_3v^\delta(\bar x)+\frac{M_2m}{2M}.
\end{aligned}\end{equation}

Further,  define
\begin{gather*}
M_4(\bar x)=\int_{B_{\frac{7}{8}}^c}\frac{C_{n,s}}{|\bar x-y|^{n+2s}}dy\leq M_1,   \qquad  M_5(\bar x)=\int_{B_{\frac{7}{8}}\backslash B_{\frac{5}{8}} }\frac{C_{n,s}}{|\bar x-y|^{n+2s}}dy\geq M_2,
\end{gather*}

Then
\[
\begin{array}{rl}
0 \leq& (-\Delta)^sv^\delta(\bar{x})+J_\delta(\vec b\cdot\nabla v)(\bar x)+Mv^\delta(\bar{x})\\
= &\displaystyle C_{n,s}\bigg(\int_{B_{\frac{7}{8}}^c}\frac{v^\delta(\bar{x})-v^\delta(y)}{|\bar{x}-y|^{n+2s}}dy
+\int_{B_{\frac{7}{8}}\backslash{B_{\frac{5}{8}}}}\frac{v^\delta(\bar{x})-v^\delta(y)}{|\bar{x}-y|^{n+2s}}dy
\\&\displaystyle+{\rm P.V.}\int_{B_{\frac{5}{8}}}\underbrace{\frac{v^\delta(\bar{x})-v^\delta(y)}{|\bar{x}-y|^{n+2s}}}_{\leq0}dy\bigg)+M[v^\delta(\bar x)+I]+Mv^\delta(\bar{x})\\
\leq &M_4(\bar{x})v^\delta(\bar{x})+M_5(\bar{x})(v^\delta(\bar{x})-m)+0+2Mv^\delta(\bar{x})+MI\\
\leq &\displaystyle M_1v^\delta(\bar{x})+M_2(v^\delta(\bar{x})-m)+2Mv^\delta(\bar{x})+MM_3v^\delta(\bar x)+\frac{M_2m}{2}\\
=&\displaystyle (M_1+M_2+2M+MM_3)v^\delta(\bar x)-\frac{M_2}{2}m.
\end{array}
\]
This implies $v^\delta(x)\geq\alpha m$ in $B_{\frac{7}{8}}$.
By taking $\delta\rightarrow0$, we get $v(x)\geq \alpha m$, thus $u(x)\geq \alpha m$ in $B_1$.
\end{proof}
\subsection{Proof of Theorem \ref{l 1.2}}
\begin{proof}
Applying Theorem \ref{l 0.1}, we know:
\begin{equation}
(-\Delta)^s u(x)+\vec{b}(x)\cdot\nabla u(x)+Mu(x)=\mu+a\delta_0\geq0,\ \ {\rm in}\ \ \mathcal{D}' (B_1).
\end{equation}
Then we derive the desired result from theorem \ref{l 1.3}
\end{proof}

We emphasize here the importance of the B\^{o}cher type Theorem \ref{l 0.1}:
{\em the nonnegative fractional super-harmonic function on the punctured ball $B_1\backslash\{0\}$ is also actually a fractional super-harmonic function on the whole ball $B_1$.}

 {\footnotesize REMARK} \textsc{6}. {\em There is an obvious difference between (\ref{6.3}) in Theorem \ref{l 0.2} and (\ref{6.8}) in Theorem \ref{l 1.2} for the special case $M=0$ in (\ref{6.1}) and (\ref{6.7}), that is, the positive constant $\alpha<1$ in (\ref{6.8}), which is resulted by the non-locality of the fractional Laplacian.  See some interesting examples in \cite{li0}.}

\section{Regularity of solutions}\label{4}
 \begin{lemma}\label{lckest2}
 Suppose that $u(x)\in L^1_{\rm loc}(B_1)$ is a solution of the equation
 \begin{equation}
 -\Delta u(x)+\vec{b}(x)\cdot\nabla u(x)=\mu, \ \ \ {\rm in}\  \ \mathcal{D}'(B_1),
 \end{equation}
 where $\mu$ is a Radon measure and $\vec{b}(x)\in C^1(B_1)$. Then
 it holds that $u(x)\in W^{1,p}_{\rm loc}(B_1)$ for  $1\leq p<\frac{n}{n-1}$.
 \end{lemma}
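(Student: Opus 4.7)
The plan is to combine Stampacchia's theory of elliptic equations with measure data with interior elliptic regularity. First I would recast the equation in divergence form. Using the identity $\vec b\cdot\nabla u=\mathrm{div}(\vec b\,u)-(\mathrm{div}\,\vec b)u$, the hypothesis becomes
\begin{equation*}
-\Delta u+\mathrm{div}(\vec b\,u)=\mu+(\mathrm{div}\,\vec b)\,u =: \tilde\mu\quad\text{in }\mathcal{D}'(B_1).
\end{equation*}
Because $\vec b\in C^1(B_1)$ and $u\in L^1_{\mathrm{loc}}(B_1)$, the distribution $\tilde\mu$ is a signed Radon measure of locally finite total variation on $B_1$.

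To prove $u\in W^{1,p}_{\mathrm{loc}}(B_1)$, I would fix $r<R<1$ and on $B_R$ solve the auxiliary Dirichlet problem
\begin{equation*}
-\Delta v+\mathrm{div}(\vec b\,v)=\tilde\mu|_{B_R}\text{ in }B_R,\qquad v=0\text{ on }\partial B_R,
\end{equation*}
via the Green function representation $v(x)=\int_{B_R}G(x,y)\,d\tilde\mu(y)$, where $G$ is the Green function for the uniformly elliptic, non-symmetric operator $-\Delta+\mathrm{div}(\vec b\,\cdot)$ on $B_R$. Standard pointwise estimates give $|G(x,y)|\le C|x-y|^{2-n}$ and $|\nabla_x G(x,y)|\le C|x-y|^{1-n}$ uniformly on compact subsets of $B_R\times B_R$. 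The Riesz potential bounds then yield $v\in W^{1,p}_0(B_R)$ for every $1\le p<n/(n-1)$, with $\|v\|_{W^{1,p}(B_R)}\le C\,|\tilde\mu|(B_R)$.

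Next, set $w:=u-v$ on $B_R$. Then $w\in L^1_{\mathrm{loc}}(B_R)$ satisfies the homogeneous equation $-\Delta w+\mathrm{div}(\vec b\,w)=0$ in $\mathcal{D}'(B_R)$, which is a uniformly elliptic equation with smooth (indeed $C^1$) coefficients and vanishing right-hand side. Interior elliptic regularity (via mollification followed by Sobolev bootstrapping) then gives $w\in C^\infty(B_R)$. Combining the two pieces yields $u=v+w\in W^{1,p}(B_r)$ for every $1\le p<n/(n-1)$, as claimed.

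The main obstacle will be establishing the Green function estimates above: they are equivalent to executing Stampacchia's duality argument, namely solving the adjoint equation $-\Delta\varphi-\vec b\cdot\nabla\varphi=\mathrm{div}(F)$ for $F\in L^q(B_R)^n$ with $q>n$ and obtaining the bound $\|\varphi\|_{L^\infty(B_R)}\le C\|F\|_{L^q}$. This requires Calder\'on--Zygmund $W^{2,q}$ estimates together with the Sobolev embedding $W^{1,q}\hookrightarrow L^\infty$, and the first-order drift is absorbed either by a perturbation argument in small balls or, equivalently, by exploiting the $C^1$-regularity of $\vec b$ to move freely between divergence and non-divergence form with uniformly bounded coefficients.
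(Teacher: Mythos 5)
The paper states Lemma~\ref{lckest2} without proof, so there is no argument in the text to compare against directly; the closest analogue is the proof of Lemma~\ref{l 8.1}, the fractional version, which proceeds not through the Green function of the full operator but through a \emph{free-space potential decomposition} $u=v+w$, where $v$ is a convolution of the fundamental solution $\Phi$ against $\mu$ and against $\mathrm{div}_y(\vec b(y)\Phi(x-y))\,u(y)$, and $w$ solves the homogeneous equation and is handled by a Weyl-type lemma; the resulting estimate on $v$ is bootstrapped. Your route --- solve an auxiliary Dirichlet problem on $B_R$ using the Green function of $-\Delta+\mathrm{div}(\vec b\,\cdot)$, and exploit the pointwise bounds $|G|\le C|x-y|^{2-n}$, $|\nabla_x G|\le C|x-y|^{1-n}$ --- is a genuinely different and more compact decomposition. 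It buys you a one-step argument at the price of importing Littman--Stampacchia--Weinberger/Gr\"uter--Widman Green-function theory for nonsymmetric divergence-form operators; the paper's approach needs only the Newtonian potential and a short iteration (once $u\in L^p_{\mathrm{loc}}$ for some $p>1$, Calder\'on--Zygmund is available and $\nabla\Phi*(\vec b\,u)\in W^{1,p}_{\mathrm{loc}}$, which closes the loop).

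Three points in your sketch need fixing before it is airtight. First, with $\vec b$ only in $C^1$, the homogeneous piece $w$ satisfying $-\Delta w+\mathrm{div}(\vec b\,w)=0$ is $C^{2,\alpha}_{\mathrm{loc}}$ after bootstrapping with Schauder, not $C^\infty$; the bootstrap stalls at the regularity of $\vec b$. This does not affect the conclusion since any $W^{1,\infty}_{\mathrm{loc}}$ regularity of $w$ suffices, but the claim as stated is false. Second, for $n=2$ the pointwise bound $|G(x,y)|\le C|x-y|^{2-n}$ degenerates to a constant and is incorrect: the correct bound is logarithmic. Your argument survives because only $|\nabla_xG|\le C|x-y|^{1-n}$ is used to control $\nabla v$ in $L^p$, but the $n=2$ case should be flagged. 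Third, the duality step is miscited: you need an $L^\infty$ bound for the adjoint problem $-\Delta\varphi-\vec b\cdot\nabla\varphi=\mathrm{div}(F)$ with $F\in L^q$, $q>n$, and that comes from De~Giorgi--Nash--Moser (or Stampacchia truncation) for divergence-form right-hand sides, not from Calder\'on--Zygmund $W^{2,q}$ theory, which is a non-divergence-form statement and does not directly apply when the data is $\mathrm{div}(F)$. With these corrections the argument is complete, and in fact the drift term could alternatively be absorbed as in the paper's Lemma~\ref{l 8.1}, which would avoid the Green-function machinery altogether.
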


   \begin{lemma}\label{a18.0} Suppose that $w(x)\in \mathcal{L}_{2s}$ with $0<s<1$ and
   \begin{equation}\label{8.3}
   (-\Delta)^sw(x)=0,\ {\rm in}\ B_1,
   \end{equation}
then $w(x)$ is smooth in $B_1$.
 \end{lemma}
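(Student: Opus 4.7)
The plan is to regularize $w$ by mollification and then use the explicit Poisson integral representation for $s$-harmonic functions on balls to get interior smoothness. Set $w^\delta(x)=(J_\delta w)(x)$ for small $\delta>0$; since $w\in\mathcal{L}_{2s}$, this convolution lies in $C^\infty(\mathbb{R}^n)\cap\mathcal{L}_{2s}$. For any $\varphi\in\mathcal{D}(B_{1-2\delta})$ one has $J_\delta\varphi\in\mathcal{D}(B_{1-\delta})\subset\mathcal{D}(B_1)$, and by Fubini together with the commutation of $(-\Delta)^s$ with $J_\delta$,
\[
\int_{\mathbb{R}^n} w^\delta\,(-\Delta)^s\varphi\,dx=\int_{\mathbb{R}^n} w\,(-\Delta)^s(J_\delta\varphi)\,dx=0,
\]
since $(-\Delta)^sw=0$ in $\mathcal{D}'(B_1)$. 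Because $w^\delta$ is smooth and belongs to $\mathcal{L}_{2s}$, the pointwise integral defining $(-\Delta)^sw^\delta(x)$ converges classically and agrees with the distributional value, so $(-\Delta)^sw^\delta\equiv 0$ pointwise on $B_{1-2\delta}$.

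Next, fix $x_0\in B_1$ and choose $r>0$ with $\overline{B_r(x_0)}\subset B_{1-2\delta}$. I would invoke the Riesz-type Poisson formula for the fractional Laplacian on the ball,
\[
w^\delta(x)=\int_{\mathbb{R}^n\setminus B_r(x_0)} P_r(x-x_0,\,y-x_0)\,w^\delta(y)\,dy,\qquad x\in B_r(x_0),
\]
with kernel $P_r(x,y)=c_{n,s}\bigl((r^2-|x|^2)/(|y|^2-r^2)\bigr)^s|x-y|^{-n}$. Its validity follows by applying the maximum principle on $B_r(x_0)$ to the difference between $w^\delta$ and its Poisson extension; both are smooth, $s$-harmonic on $B_r(x_0)$, lie in $\mathcal{L}_{2s}$, and coincide outside $B_r(x_0)$. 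Since $P_r$ is $C^\infty$ in $x$ on $B_{r/2}(x_0)$ with every derivative bounded in $y$ by $C_k(1+|y|)^{-n-2s}$, differentiation under the integral yields
\[
\|w^\delta\|_{C^k(B_{r/2}(x_0))}\le C(k,r,n,s)\int_{\mathbb{R}^n}\frac{|w(y)|}{1+|y|^{n+2s}}\,dy,
\]
uniformly in $\delta$, using $\|w^\delta\|_{\mathcal{L}_{2s}}\le\|w\|_{\mathcal{L}_{2s}}$.

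Finally, as $\delta\to0$ we have $w^\delta\to w$ in $L^1_{\rm loc}(\mathbb{R}^n)$, and the uniform $C^k$ bounds combined with Arzel\`a--Ascoli and a diagonal extraction force $w$ to coincide almost everywhere with a $C^\infty$ function on $B_{r/2}(x_0)$. Since $x_0\in B_1$ was arbitrary, $w\in C^\infty(B_1)$.

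The main obstacle will be making rigorous the Poisson representation for data merely in $\mathcal{L}_{2s}$: one must check convergence of the tail integral against $P_r$, establish kernel-derivative bounds that are integrable against the $\mathcal{L}_{2s}$-weight, and justify the maximum-principle identification of $w^\delta$ with its Poisson extension for smooth solutions that are only polynomially controlled at infinity. The distributional commutation of $J_\delta$ with $(-\Delta)^s$ also requires a brief argument, either via the Fourier multiplier description or directly from the singular-integral definition.
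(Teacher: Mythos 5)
Your overall strategy---mollify, apply the explicit Poisson representation for $s$-harmonic functions on a ball, then pass to the limit---is the same as the paper's, and the reduction to $(-\Delta)^s w^\delta=0$ pointwise on $B_{1-2\delta}$ via the commutation $J_\delta\circ(-\Delta)^s=(-\Delta)^s\circ J_\delta$ is fine. However, there is a genuine gap in the key estimate. The kernel
\[
P_r(x,y)=c_{n,s}\Bigl(\frac{r^2-|x|^2}{|y|^2-r^2}\Bigr)^s\frac{1}{|x-y|^n}
\]
is \emph{not} bounded by $C(1+|y|)^{-n-2s}$: for fixed $x\in B_{r/2}(x_0)$ it blows up like $(|y-x_0|^2-r^2)^{-s}$ as $y$ approaches $\partial B_r(x_0)$ from outside, and every $x$-derivative $\partial_x^\alpha P_r$ inherits the same singularity (the singular factor $(|y|^2-r^2)^{-s}$ is $x$-independent). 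The singularity is integrable in $y$, so the Poisson integral converges for each fixed $\delta$, but your claimed uniform bound
\[
\|w^\delta\|_{C^k(B_{r/2}(x_0))}\leq C\int_{\mathbb{R}^n}\frac{|w(y)|}{1+|y|^{n+2s}}\,dy
\]
does not follow: the contribution from the annulus $\{r<|y-x_0|<2r\}$ is $\int (|y-x_0|^2-r^2)^{-s}|w^\delta(y)|\,dy$, and since mollification can make $w^\delta$ pointwise large near $\partial B_r(x_0)$ while keeping $\|w\|_{\mathcal{L}_{2s}}$ fixed, this term is not controlled uniformly in $\delta$ by the $\mathcal{L}_{2s}$-norm alone. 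This is exactly the ``main obstacle'' you flag at the end, and it is where the proof currently breaks.

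The paper resolves this by averaging the radius of the representing ball: instead of a single $P_r$, one uses
\[
G_{r,\delta_0}(x,y)=\frac{1}{\delta_0}\int_{r-\delta_0\leq t\leq\min(|y|,r)}P_t(x,y)\,dt,
\]
and the $t$-integration washes out the $(|y|^2-t^2)^{-s}$ singularity (since $\int(|y|^2-t^2)^{-s}\,dt$ converges for $s<1$), yielding the genuine bound $G_{r,\delta_0}(x,y)\leq C(1+|y|)^{-n-2s}$ together with the same bound for $\partial_x^\alpha G_{r,\delta_0}$. That is the estimate you need in order to pass to the limit $\delta\to 0$ with uniform $C^k$ control. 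To repair your argument you should either adopt this radius-averaging device, or first establish (by some other means, e.g.\ the Caffarelli--Silvestre extension or a separate interior $L^\infty$-estimate for $s$-harmonic functions) a uniform-in-$\delta$ bound for $\|w^\delta\|_{L^\infty}$ on a neighbourhood of $\partial B_r(x_0)$, which the current proposal does not supply.
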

\begin{proof}[Proof of Lemma \ref{a18.0}] First of all, we consider the case that $w(x)\in C(\mathbb{R}^n)$. Then we can use Green's function of the fractional Laplacian given in Theorem 2.10 of \cite{bucur} to have
\begin{equation}\label{8.4}
w(x)=\int_{B_t^c}P_t(x,y)w(y)dy,
\end{equation}
with the Green's function $P_t(x,y)$ with fixed $t>0$ given by
\begin{equation}\label{8.5}
P_t(x,y)=c(n,s)\bigg(\frac{t^2-|x|^2}{|y|^2-t^2}\bigg)^s\frac{1}{|x-y|^n},\ {\rm for\ any}\ x\in B_t\ {\rm and\ any}\ y\in\mathbb{R}^n\backslash\bar{B}_t.
\end{equation}

For given $x\in B_1$, by taking average of $w(x)$ on the torus $1-\delta_0\leq t\leq 1$ with $\delta_0=\frac{1-|x|}{2}$ from the representation (\ref{8.4}), we have
\begin{equation}\label{8.6}
\begin{array}{rl}
w(x)=&\displaystyle\int_{1-\delta_0}^1dt\frac{1}{\delta_0}\int_{B_t^c}P_t(x,y)w(y)dy\\
=&\displaystyle\int_{|y|>1-\delta_0}w(y)\frac{1}{\delta_0}\int_{1-\delta_0\leq t\leq\min(|y|,1)}P_t(x,y)dtdy
:=\displaystyle\int_{|y|\geq 1-\delta_0}w(y)G_{1,\delta_0}(x,y)dy,
\end{array}
\end{equation}
with
\begin{equation}\label{8.7}
G_{r,\delta_0}(x,y)=\frac{1}{\delta_0}\int_{r-\delta_0\leq t\leq\min(|y|,r)}P_t(x,y)dt.
\end{equation}
By a direct calculation, we know that $\partial^\alpha_x G_{r,\delta_0}(x,y)$ is continuous in $y$ for any nonnegative multi-index $\alpha$. This yields that $w(x)$ is smooth in $B_{t}$ with $0<t<1$.

Second, when $w(x)\in\mathcal{L}_{2s}$, we consider the mollification $w_\epsilon(x)=J_\epsilon w(x)$ with $\epsilon\ll\delta_0$. Then
\begin{equation*}
w_\epsilon(x)=\int_{|y|\geq 1-2\delta_0}w_\epsilon(y)G_{1-\delta_0,\delta_0}(x,y)dy\rightarrow \int_{|y|\geq 1-2\delta_0}w(y)G_{1-\delta_0,\delta_0}(x,y)dy,\ {\rm when}\ \epsilon\rightarrow0.
\end{equation*}
Here we have used the fact that $w_\epsilon(x)\rightarrow w(x)$ in $\mathcal{L}_{2s}$, which is derived from $G_{1-\delta_0,\delta_0}(x,y)\leq \frac{C}{1+|y|^{n+2s}}$. This proves that $w(x)$ is smooth
in $B_1$. \end{proof}

\begin{lemma}\label{l 8.1}
Let $B_1\subset \mathbb{R}^n$ with $n\geq 2$.
Assume that $u(x)\in \mathcal{L}_{2s}$ with $s\in (\frac{1}{2},1)$ satisfies
\begin{equation}
(-\Delta)^su(x)+\vec{b}(x)\cdot\nabla u(x)=\mu\quad \text{in}\ \mathcal{D}'(B_1),
\end{equation}
 where $\mu$ is a Radon measure and $\vec{b}(x)\in C^1(B_1)$.

 Then $(-\Delta)^ru(x)\in L^p_{\rm loc}(B_1)$ for $1\leq p<\frac{n}{n-2s+2r}$ and $r<s$.
 \end{lemma}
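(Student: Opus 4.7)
The plan is to invert the fractional Laplacian via the Riesz kernel $\Phi_s(x)=C_{n,s}|x|^{-(n-2s)}$, combined with a bootstrap argument to control the drift term $\vec{b}\cdot\nabla u$. The two ingredients are the cutoff/commutator techniques already used in Theorem \ref{l 0.1}, and the Riesz composition $(-\Delta)^r\Phi_s=\Phi_{s-r}$ (valid for $0<r<s<1$), which lets us convert questions about $(-\Delta)^r u$ into questions about convolutions with Riesz kernels of order $2(s-r)>0$.

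First, I would localize. Fix $\eta\in C_0^\infty(B_1)$ with $\eta\equiv 1$ on $B_{1/2}$ and set $w=\eta u$. A commutator computation exactly like those producing the $K_1,K_2$ bounds in (\ref{1.6(3)}) shows that on $B_{1/4}$ the distribution $(-\Delta)^s w$ agrees with $(-\Delta)^s u=\mu-\vec{b}\cdot\nabla u$ up to a smooth correction, while outside the support of $\eta$ it is a smooth function decaying like $(1+|x|)^{-(n+2s)}$. Since $w\in L^1(\mathbb{R}^n)$ is compactly supported, the $\mathcal{S}'$ inversion formula $w=\Phi_s*(-\Delta)^s w$ is available, and applying $(-\Delta)^r$ yields, on $B_{1/4}$,
\[
(-\Delta)^r w \;=\; \Phi_{s-r}*(\eta\mu)\;-\;\Phi_{s-r}*(\eta\vec{b}\cdot\nabla u)\;+\;\Phi_{s-r}*g,
\]
where $g$ lumps together the smooth and tail corrections. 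The first term is the Riesz potential of order $2(s-r)$ of a compactly supported finite Radon measure, hence belongs to $L^p_{\rm loc}$ for every $1\le p<\tfrac{n}{n-2s+2r}$ by the classical Hardy--Littlewood--Sobolev estimate for measures; the third term is $C^\infty$ on $B_{1/4}$.

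The drift term is treated by integration by parts against $\eta\vec{b}\,\Phi_{s-r}(x-\cdot)$ (regularizing near the singularity if needed), giving
\[
\Phi_{s-r}*(\eta\vec{b}\cdot\nabla u) \;=\; -(\nabla\Phi_{s-r})*(\eta\vec{b}u)\;+\;\Phi_{s-r}*\bigl((\nabla\!\cdot\!(\eta\vec{b}))u\bigr).
\]
The second piece is a Riesz potential of an $L^1_{\rm loc}$ function (since $u\in\mathcal{L}_{2s}\subset L^1_{\rm loc}$) and so lands in the correct $L^p$ class. The first piece has kernel $\nabla\Phi_{s-r}$ of order $2s-2r-1$, and Hardy--Littlewood--Sobolev yields the target estimate provided $\vec{b}u$ has sufficient integrability. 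To obtain this for every $r<s$, I would bootstrap: applying the lemma first at $r=0$ gives $u\in L^{p_0}_{\rm loc}$ for $p_0<\tfrac{n}{n-2s}$; feeding this back one progressively upgrades $\vec{b}u$ and consequently the admissible range of $r$. Since $s>\tfrac12$, each bootstrap step gains strictly more than one derivative of regularity, so finitely many iterations close the scheme for any $r<s$.

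The main obstacle is precisely this bootstrap when $r$ approaches $s$: the kernel $\nabla\Phi_{s-r}$ passes through a Calder\'on--Zygmund regime (as $2s-2r-1\downarrow 0$) and a direct Riesz-potential bound is no longer available. The hypothesis $s>\tfrac12$ is essential here, as it ensures that the fractional diffusion strictly dominates the first-order drift and that the iterative improvement converges to the sharp exponent $p<\tfrac{n}{n-2s+2r}$. Everything else is a careful bookkeeping of the distributional identities and standard Hardy--Littlewood--Sobolev/Calder\'on--Zygmund estimates, in the spirit of the Laplacian analogue Lemma \ref{lckest2}.
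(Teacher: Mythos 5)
Your overall architecture is the same as the paper's: subtract a Riesz-potential representation of the solution (the paper defines $v=v_1+v_2$ via a truncated measure plus a divergence term, which is exactly your $\Phi_s*(\eta\mu)-(\nabla\Phi_s)*(\eta\vec b\,u)+\dots$ after the integration by parts you describe), observe that the remainder solves $(-\Delta)^s w=0$ and is therefore smooth by Lemma~\ref{a18.0}, handle the measure piece by Hardy--Littlewood--Sobolev, and bootstrap the drift piece. You also correctly flag the genuine obstruction: when $r\geq s-\tfrac12$ the kernel $\nabla\Phi_{s-r}$ leaves the Riesz-potential regime. So far this matches the paper in spirit.

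The gap is in the bootstrap, which as written does not close for $r$ close to $s$. Iterating purely on the $L^p$ integrability of $u$ (or of $\vec b\,u$) cannot reach the target range. Concretely, if all you know is $u\in L^{p_0}_{\rm loc}$ with $p_0<\frac{n}{n-2s}$ and you estimate $(\nabla\Phi_{s-r})*(\vec b\,u)$ by HLS, the resulting exponent satisfies $\frac{1}{q}>1-\frac{4s-2r-1}{n}$, whereas the target $q<\frac{n}{n-2s+2r}$ requires $\frac{1}{q}>1-\frac{2s-2r}{n}$; comparing these forces $2s<1$, which contradicts $s>\tfrac12$. Worse, for $r>s-\tfrac12$ the operator $\nabla\Phi_{s-r}$ has negative order and is not even a Calder\'on--Zygmund operator, so no $L^p\to L^p$ bound is available at all. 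Improving $p_0$ never repairs this. The missing ingredient is the redistribution identity the paper uses in (\ref{8.16}): split $(-\Delta)^r$ as $(-\Delta)^\alpha_x(-\Delta)^\beta_y$ with $\alpha+\beta=r$ acting respectively on the kernel and on $u$, and iterate with $\beta$ increasing in increments of $\frac{2s-1}{4}$. This keeps the kernel's order strictly positive at every step while feeding the previously obtained bound on $(-\Delta)^{k(2s-1)/4}u$ into the next step, and that is precisely what closes the argument for all $r<s$. Your phrase ``each bootstrap step gains strictly more than one derivative'' is also incorrect: the equation gains $2s$ derivatives and the drift costs one, so the net gain is $2s-1\in(0,1)$, which is why several iterations are needed and why the redistribution is essential rather than optional. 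Finally, ``applying the lemma first at $r=0$'' is circular as a base case, since the $r=0$ statement is not easier than the general one; the honest base case is $u\in L^1_{\rm loc}$ coming from $u\in\mathcal{L}_{2s}$.
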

\begin{proof}
First, we show that $(-\Delta)^ru(x)\in L^1_{\rm loc}(B_1)$ for $r<s$. To this end, we fix $\epsilon\in(0,\frac{1}{3})$ and set
\begin{equation}\label{8.10}
\begin{array}{rl}
v(x)=&\displaystyle c(n,s)\int_{\mathbb{R}^n}\frac{1}{|x-y|^{n-2s}}d\tilde{\mu}_y+c(n,s)\int_{\mathbb{R}^n}{\rm div}_y\bigg(\frac{\vec{b}(y)}{|x-y|^{n-2s}}\bigg)\tilde{u}(y)dy\\
=&\displaystyle c(n,s)\int_{B_1}\frac{1}{|x-y|^{n-2s}}d\tilde{\mu}_y+c(n,s)\int_{B_1}{\rm div}_y\bigg(\frac{\vec{b}(y)}{|x-y|^{n-2s}}\bigg)\tilde{u}(y)dy\\
:=&\displaystyle v_1(x)+v_2(x)
\end{array}
\end{equation}
with
\begin{equation*}
\tilde{u}(x)=\begin{cases}
u(x),\ &x\in B_1,\\
0,\  &x\in B_1^c,
\end{cases}
\qquad {\rm and}\qquad
\tilde{\mu}(E)=\mu(E\cap B_{1-\epsilon})
\end{equation*}
By Young's inequality, $v_1(x)$ is well-defined. A direct calculation shows
\begin{equation*}(-\Delta)^sv(x)+\vec{b}(x)\cdot\nabla u(x)=\mu\quad \text{in}\ \mathcal{D}'(B_{1-\epsilon}),
\end{equation*}
\begin{equation}\label{8.11}
\text{and\ consequently}\qquad(-\Delta)^sw(x)=0\quad \text{in}\ \mathcal{D}'(B_{1-\epsilon}),\quad {\rm where}\quad w(x):=u(x)-v(x).
\end{equation}
From Lemma \ref{a18.0}, we know that $w(x)$ is smooth in $B_{1-\epsilon}$. We now consider the estimate of $v(x)$.

For $v_1(x)$, we have
\begin{equation}\label{8.12}
(-\Delta)^rv_1(x)=C\int_{B_{\frac{3}{4}}}\frac{1}{|x-y|^{n-2s+2r}}d\mu_y,
\end{equation}
which together with Hausdorff-Young inequality gives $(-\Delta)^rv_1(x)\in L^1(B_1)$ for $0<r<s$.

For $v_2(x)$, it needs to use method of induction. We rewrite $v_2(x)$ as
\begin{equation}\label{8.13}
\begin{array}{rl}
v_2(x)=&\displaystyle\int_{B_1}\frac{{\rm div}\vec{b}(y)}{|x-y|^{n-2s}}u(y)dy+C\int_{B_1}\frac{\vec{b}(y)\cdot(x-y)}{|x-y|^{n-2s+2}}u(y)dy\\
:=&\displaystyle v_3(x)+v_4(x).
\end{array}
\end{equation}
The estimate for $v_3(x)$ can be obtained in the same way of $v_1(x)$. We are focused to show $(-\Delta)^rv_4\in L^1(B_1)$ In fact, by using Hausdorff-Young inequality and the following estimate
\begin{equation}\label{8.14}
|(-\Delta)^{\frac{2s-1}{4}}v_4(x)|\leq C\int_{B_1}\frac{|\vec{b}(y)|}{|x-y|^{n-s+\frac{1}{2}}}|u(y)|dy.
\end{equation}
This implies $(-\Delta)^{\frac{2s-1}{4}}v_4(x)\in L^p(B_1)$ for $1\leq p<\frac{n}{n-(s-\frac{1}{2})}$. Similarly, we have for some integer $k>0$ such that
\begin{equation}\label{8.15}
(-\Delta)^{\frac{k(2s-1)}{4}}v_4(x)\in L^p(B_1),\ {\rm with}\ 1\leq p<\frac{n}{n-k(s-\frac{1}{2})}\ {\rm and}\ \frac{k(2s-1)}{2}<1.
\end{equation}

Then we set $\frac{c(2s-1)}{4}=r$, and denote $[c]=k$. Hence, by virtue of the following equality
\begin{equation}\label{8.16}
\begin{array}{rl}
(-\Delta)^{r}v_4(x)=&\displaystyle\int_{B_1}(-\Delta)_x^{\frac{(c-k)(2s-1)}{4}}\frac{\vec{b}(y)\cdot(x-y)}{|x-y|^{n-2s+2}}(-\Delta)_y^{\frac{k(2s-1)}{4}}u(y)dy,
\end{array}
\end{equation}
(\ref{8.15}) and Hausdorff-Young inequality, we get
\begin{equation}\label{8.17}
(-\Delta)^{r}v_4(x)\in L^p(B_1),\ {\rm for}\ 1\leq p<\frac{n}{n-2s+2r}\ {\rm with}\ 0<r<s,
\end{equation}
which together with the above estimates for $v_1,v_3$ implies $(-\Delta)^rv(x)\in L^1(B_1)$ for $0<r<s$. Since $u(x)=v(x)+w(x)$ and $w(x)$ is smooth in $B_t$ with $0<t<1$, we can conclude that
\begin{equation}\label{8.18}
(-\Delta)^ru(x)\in L^1(B_t)\ {\rm for}\ 0<t<1,\ {\rm and}\ 0<r<s.
\end{equation}

In the second step, we will prove that $(-\Delta)^ru(x)\in L^p(B_t)$ for $0<t<1$ and $0<r<s$. In fact, we can select $r'$ such that $0<r<r'<s$ and $(-\Delta)^{r'}u(x)\in L^1(B_t)$. Then select $1\leq p<\frac{n}{n-2(r'-r)}$. By virtue of Hardy-Littlewood-Sobolev inequalities, we have that $(-\Delta)^ru(x)\in L^p(B_t)$ for $1\leq p<\frac{n}{n-2s+2r}$ and $0<r<s$. This completes the proof of Lemma \ref{l 8.1}.
\end{proof}

In the same way, we can also obtain $W^{\alpha,p}$-estimate when the source term is a function in $L^p(B_1)$-space.

  \begin{lemma}\label{l 8.2}
Let $B_1\subset \mathbb{R}^n$ with $n\geq 2$.
Assume that $u(x)\in \mathcal{L}_{2s}$ with $s\in (\frac{1}{2},1)$ satisfies
\begin{equation}
(-\Delta)^su(x)+\vec{b}(x)\cdot\nabla u(x)=f(x)\quad \text{in}\ \mathcal{D}'(B_1),
\end{equation}
 where $f(x)\in L^p(B_1)$ with $p>1$ and $\vec{b}(x)\in C^1(B_1)$.

 Then $(-\Delta)^su(x)\in L^p_{\rm loc}(B_1)$ for $p>1$.
 \end{lemma}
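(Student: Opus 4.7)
The plan is to adapt the decomposition from the proof of Lemma \ref{l 8.1} and then bootstrap once the $L^p$ regularity of the source is exploited. Fix a small $\epsilon \in (0,\tfrac{1}{3})$, set $\tilde f = f\chi_{B_{1-\epsilon}}$ and $\tilde u = u\chi_{B_1}$, and define
\begin{equation*}
v_1(x) = c(n,s)\int_{\mathbb R^n}\frac{\tilde f(y)}{|x-y|^{n-2s}}\,dy,\qquad v_2(x) = c(n,s)\int_{\mathbb R^n}\operatorname{div}_y\!\Big(\frac{\vec b(y)}{|x-y|^{n-2s}}\Big)\tilde u(y)\,dy,
\end{equation*}
together with $v=v_1+v_2$. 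The same calculation as in Lemma \ref{l 8.1} gives $(-\Delta)^s v + \vec b\cdot\nabla u = f$ in $\mathcal D'(B_{1-\epsilon})$, so $w := u - v$ is $s$-harmonic on $B_{1-\epsilon}$ and therefore smooth there by Lemma \ref{a18.0}. Local $L^p$ control of $(-\Delta)^s u$ thus reduces to that of $(-\Delta)^s v$.

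For the first summand, $v_1$ is the Riesz potential $I_{2s}\tilde f$, hence $(-\Delta)^s v_1 = \tilde f$ almost everywhere and $\|(-\Delta)^s v_1\|_{L^p(\mathbb R^n)}\leq C\|f\|_{L^p(B_{1-\epsilon})}$ directly. For $v_2$ I split $v_2 = v_3 + v_4$ exactly as in (\ref{8.13}): $v_3$ is a Riesz potential of $(\operatorname{div}\vec b)\tilde u$, so $(-\Delta)^s v_3 = (\operatorname{div}\vec b)\tilde u \in L^p_{\rm loc}$ as soon as $u\in L^p_{\rm loc}$; and $v_4$ is a Riesz-type integral of order $2s-1$ applied to $\vec b\tilde u$, so $(-\Delta)^s v_4$ behaves morally like a first-order operator acting on $\vec b\tilde u$ and lies in $L^p_{\rm loc}$ provided $u\in W^{1,p}_{\rm loc}$. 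Hence it suffices to prove $u\in W^{1,p}_{\rm loc}(B_1)$.

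To obtain this I would bootstrap starting from Lemma \ref{l 8.1}. Viewing $f\,dx$ as a Radon measure, that lemma yields $(-\Delta)^r u\in L^{p_0}_{\rm loc}$ for every $r\in(1/2,s)$ and $p_0 < n/(n-2s+2r)$; since $s>1/2$, choosing $r$ slightly above $1/2$ produces some $p_0>1$, and via $W^{2r,p_0}\hookrightarrow W^{1,p_0}$ gives $u\in W^{1,p_0}_{\rm loc}$. Plugging this back into the equation yields $(-\Delta)^s u = f - \vec b\cdot\nabla u \in L^{\min(p,p_0)}_{\rm loc}$, and inverting via the Riesz potential together with the Hardy--Littlewood--Sobolev inequality upgrades $u$ to $W^{2s,\min(p,p_0)}_{\rm loc}$, hence to $W^{1,q_1}_{\rm loc}$ with $q_1$ strictly larger than $p_0$ by Sobolev embedding (using $2s>1$). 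Iterating produces a strictly increasing sequence of exponents that surpasses any prescribed $p$ in finitely many steps, at which stage $u\in W^{1,p}_{\rm loc}$ and the equation itself delivers $(-\Delta)^s u\in L^p_{\rm loc}$.

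The main obstacle is the bootstrap bookkeeping, namely verifying that each step strictly improves the exponent and that the iteration terminates after finitely many rounds for any prescribed $p$; a secondary technicality is handling the non-local tails of $v_1$ and $v_2$ using $u\in\mathcal L_{2s}$ and $\vec b\in C^1$, exactly in the spirit of the estimates (\ref{1.12})--(\ref{1.15}) from the proof of Theorem \ref{l 0.1}, so that the pointwise formulas translate into genuine $L^p_{\rm loc}$ bounds on each inner ball $B_{1-2\epsilon}$.
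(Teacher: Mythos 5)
Your opening moves coincide with the paper's: the same decomposition $u = v_1 + v_2 + w$ with $v_2 = v_3 + v_4$, the same appeal to Lemma~\ref{a18.0} to dispose of the $s$-harmonic part $w$, and the same starting observation that $(-\Delta)^s v_1 = \tilde f\in L^p$. Where you genuinely diverge is in how the bootstrap is organized. The paper keeps the potential-theoretic decomposition alive to the end: it applies $(-\Delta)^r$ directly to the kernel integral $v_4$, iterates the Hausdorff--Young / Hardy--Littlewood--Sobolev estimates to raise the fractional order $r$ step by step, and finally picks $r$ with $\max\{\tfrac n4(1-\tfrac1p)+\tfrac s2,\ s\}<r<\tfrac n2$ so that $(-\Delta)^s v_4 \in L^p$ by one last HLS application; $v_3$ is handled similarly. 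You instead bootstrap at the level of the solution itself, fixing the derivative order at one and raising only the integrability exponent: Lemma~\ref{l 8.1} seeds $u\in W^{1,p_0}_{\rm loc}$ for some $p_0>1$, each round feeds $\vec b\cdot\nabla u\in L^{q_k}_{\rm loc}$ back into the equation, inverts $(-\Delta)^s$ locally (which is, in disguise, another $v+w$ split on a slightly smaller ball), and uses the embedding $W^{2s,q}_{\rm loc}\hookrightarrow W^{1,q^*}_{\rm loc}$ with $\tfrac1{q^*}=\tfrac1q-\tfrac{2s-1}{n}$ to advance the exponent by a fixed $\tfrac{2s-1}{n}$ per step; once $u\in W^{1,p}_{\rm loc}$ the equation delivers $(-\Delta)^s u = f - \vec b\cdot\nabla u \in L^p_{\rm loc}$ outright, which makes the $v_3,v_4$ bookkeeping you carry in the middle dispensable. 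Your route is the more standard elliptic ``gain-a-derivative'' bootstrap and is arguably easier to audit than the paper's fractional-order iteration; its cost is that every bootstrap round needs its own localization and nonlocal tail estimate via $u\in\mathcal L_{2s}$, which is precisely the technicality you flag and which would need to be spelled out to make the proof complete. Both arguments degenerate as $s\downarrow\tfrac12$ for the same reason: your per-step gain $\tfrac{2s-1}{n}$ and the paper's per-step gain of order $s-\tfrac12$ both vanish there.
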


\begin{proof} As in Lemma \ref{l 8.1}, we first set
\begin{equation}\label{8.21}
\begin{array}{rl}
v(x)=&\displaystyle\int_{\mathbb{R}^n}\frac{f(y)}{|x-y|^{n-2s}}dy+\int_{\mathbb{R}^n}{\rm div}_y\bigg(\frac{\vec{b}(y)}{|x-y|^{n-2s}}\bigg)\tilde{u}(y)dy\\
=&\displaystyle\int_{B_1}\frac{f(y)}{|x-y|^{n-2s}}dy+\int_{B_1}{\rm div}_y\bigg(\frac{\vec{b}(y)}{|x-y|^{n-2s}}\bigg)u(y)dy\\
:=&\displaystyle v_1(x)+v_2(x)
\end{array}
\end{equation}
with
\begin{equation*}
\tilde{u}(x)=
\begin{cases}
u(x),\ &x\in B_1,\\
0,\ &x\in B_1^c.
\end{cases}
\end{equation*}
It is obvious that $(-\Delta)^sv(x)+\vec{b}(x)\cdot\nabla u(x)=f(x)$ and
\begin{equation}\label{8.22}
(-\Delta)^sw(x)=0\ \ {\rm in}\ B_1,\ \ {\rm where}\ \ w(x)=u(x)-v(x).
\end{equation}
From Lemma \ref{a18.0}, we know that $w(x)$ is smooth in $B_t$ with $0<t<1$. Thus, in the following, we consider the estimate of $v(x)$.

For $v_1(x)$, since $f(x)\in L^p(B_1)$, we know that $(-\Delta)^sv_1(x)\in L^p(B_1)$ by using Hausdorff-Young inequality again.

For $v_2(x)$, by taking method of induction to the following equality as in Lemma \ref{l 8.1}
\begin{equation}\label{8.23}
\begin{array}{rl}
v_2(x)=&\displaystyle\int_{B_1}\frac{{\rm div}\vec{b}(y)}{|x-y|^{n-2s}}u(y)dy+C\int_{B_1}\frac{\vec{b}(y)\cdot(x-y)}{|x-y|^{n-2s+2}}u(y)dy\\
:=&\displaystyle v_3(x)+v_4(x),
\end{array}
\end{equation}
we can conclude that there exists a constant $r$ satisfying $s<r<\frac{n}{2}$ such that $(-\Delta)^rv_4(x)\in L^q(B_1)$ for $1<q<\frac{n}{n-2r}$. Then by using Hardy-Littlewood-Sobolev inequality $\|(-\Delta)^sv_4\|_{L^p(B_1)}\leq C\|(-\Delta)^rv_4\|_{L^q(B_1)}$ when we further select $r$ satisfying $\max\{\frac{n}{4}(1-\frac{1}{p})+\frac{s}{2},\ s\}<r<\frac{n}{2}$. Thus, $(-\Delta)^sv_4(x)\in L^p(B_1)$. Similarly, we can also get $(-\Delta)^sv_3(x)\in L^p(B_1)$. Thus from $v(x)=v_1(x)+v_3(x)+v_4(x)$, we have proved that $(-\Delta)^sv(x)\in L^p(B_1)$.

Since $u(x)=v(x)+w(x)$ and $w(x)$ is smooth in $B_t$ with $0<t<1$, we can conclude that
\begin{equation}\label{8.24}
(-\Delta)^su(x)\in L^p(B_1),\ 0<t<1.
\end{equation}
This proves Lemma \ref{l 8.2}.
\end{proof}

\section{Constructions of super/sub harmonic functions}\label{5}

\quad\!\!\quad Given two superharmonic functions $u$ and $v$,
it is well known that one can construct a new superharmonic function $w(x)=\min\{u(x),v(x)\}$.
 Here, we present a few interesting lemmas to give a somewhat complete presentation for both Laplacian and fractional Laplacian cases
 via the more general distributional approach.
\subsection{Statements of the lemma}

\begin{lemma}\label{lcklem1}
Let $\Omega$ be a domain in $\mathbb{R}^n$. Assume $u(x), v(x) , f(x) ,g(x) \in C^2_{0}(\Omega) $, and satisfy
\begin{equation}\label{lcklem1.1}
\begin{array}{rl}
{\rm i)}&\displaystyle -\Delta u(x)+\vec{b}(x)\cdot\nabla u(x)+c(x)u(x)=f(x)\ \ {\rm in}\ \ \Omega,\\
{\rm ii)}&\displaystyle -\Delta v(x)+\vec{b}(x)\cdot\nabla v(x)+c(x)v(x)=g(x)\ \ {\rm in}\ \ \Omega,
\end{array}
\end{equation}
where $\|\vec b(x)\|_{C^1(\Omega)}+\|c(x)\|_{L^\infty(\Omega)}<\infty$.  Then for $w(x)=\max\{u(x),v(x)\}$, it holds that
\begin{equation} \label{lcklem1.2}
-\Delta w(x)+\vec{b}(x)\cdot\nabla w(x)+c(x)w(x)\leq f(x)\chi_{u>v} + g(x)\chi_{u\leq v}\ \ {\rm in}\ \mathcal{D}'(\Omega).
\end{equation}
\end{lemma}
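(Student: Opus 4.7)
The plan is to approximate $\max$ by a smooth convex regularization of $|\cdot|$, derive the inequality pointwise for the smoothed function, and then pass to the distributional limit. Fix a smooth, even, convex function $\Phi_\epsilon:\mathbb{R}\to\mathbb{R}$ with $|\Phi_\epsilon'|\leq 1$, $\Phi_\epsilon''\geq 0$, and $\Phi_\epsilon(t)\to |t|$ uniformly as $\epsilon\to 0$; for concreteness one may take $\Phi_\epsilon(t)=\sqrt{t^2+\epsilon^2}$. Define
\[
w_\epsilon(x)=\frac{u(x)+v(x)}{2}+\frac{1}{2}\Phi_\epsilon(u(x)-v(x)),
\]
which lies in $C^2_0(\Omega)$ and converges uniformly to $w=\max\{u,v\}$.

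Writing $\phi_\epsilon:=\Phi_\epsilon'(u-v)$, a direct application of the chain and product rules combined with the two hypothesized PDEs yields
\[
-\Delta w_\epsilon+\vec b\cdot\nabla w_\epsilon+c\, w_\epsilon \;=\; \frac{1+\phi_\epsilon}{2}f + \frac{1-\phi_\epsilon}{2}g \;-\; \frac{1}{2}\Phi_\epsilon''(u-v)|\nabla(u-v)|^2 \;+\; R_\epsilon,
\]
where $R_\epsilon=\tfrac{c}{2}\bigl(\Phi_\epsilon(u-v)-(u-v)\phi_\epsilon\bigr)$ comes from the fact that $cw_\epsilon$ and $\tfrac{1+\phi_\epsilon}{2}cu+\tfrac{1-\phi_\epsilon}{2}cv$ do not match exactly. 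Since $|\Phi_\epsilon(t)-t\,\phi_\epsilon(t)|=\epsilon^2/\sqrt{t^2+\epsilon^2}\leq\epsilon$, we have $\|R_\epsilon\|_{L^\infty}\to 0$. Crucially, $\Phi_\epsilon''\geq 0$, so the middle term is nonpositive and may be dropped to get a pointwise inequality for the smooth function $w_\epsilon$.

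I then test against any nonnegative $\varphi\in\mathcal{D}(\Omega)$, moving the operator onto $\varphi$ by integration by parts (legitimate since $w_\epsilon\in C^2_0$). Uniform convergence $w_\epsilon\to w$ handles the left-hand side. On the right, $|\phi_\epsilon|\leq 1$ and $\phi_\epsilon\to\chi_{u>v}-\chi_{u<v}$ pointwise off the set $E:=\{u=v\}$ (with $\phi_\epsilon\equiv 0$ on $E$), so the dominated convergence theorem gives
\[
\langle -\Delta w+\vec b\cdot\nabla w+cw,\varphi\rangle \;\leq\; \int_\Omega\Bigl[f\chi_{u>v}+g\chi_{u<v}+\tfrac{1}{2}(f+g)\chi_{E}\Bigr]\varphi\,dx.
\]

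The remaining step, and the main obstacle, is to replace $\tfrac{1}{2}(f+g)\chi_{E}$ by $g\chi_{E}$, i.e., to show $f=g$ almost everywhere on $E$. This I plan to obtain by a Lebesgue density-point argument tailored to $C^2$ functions: at any density point $x_0$ of $E$, the $C^2$ function $h:=u-v$ must satisfy $h(x_0)=0$, $\nabla h(x_0)=0$, and $\nabla^2 h(x_0)=0$, because otherwise a cone along an eigendirection of a nonzero eigenvalue of $\nabla^2 h(x_0)$ would drive $h$ strictly away from zero on a set of positive density at $x_0$, contradicting that $x_0$ is a density point of $E$. Hence $\Delta u=\Delta v$ almost everywhere on $E$, and subtracting the two equations gives $f=g$ a.e.\ on $E$. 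Combining this with the limit identity above produces the claimed distributional inequality $-\Delta w+\vec b\cdot\nabla w+cw\leq f\chi_{u>v}+g\chi_{u\leq v}$.
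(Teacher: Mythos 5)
Your proof is correct, but it takes a genuinely different route from the paper's. The paper argues directly at the free boundary: in Step 1 it assumes $\partial\{u-v>0\}$ is a $C^1$ hypersurface, integrates by parts over $\{u-v>0\}$, and discards the boundary term using that the outward normal derivative of $u-v$ is nonpositive there; Step 2 removes the regularity hypothesis on the level set via Sard's theorem (choosing $\sigma_k\to0$ with $\{u-v>\sigma_k\}$ having $C^1$ boundary, then passing to the limit); Step 3 reduces $\max\{u,v\}$ to $\max\{u-v,0\}$ by adding equation ii). Your argument is instead a Kato-type inequality: the convex regularization $\Phi_\epsilon$ produces the nonpositive term $-\tfrac12\Phi_\epsilon''(u-v)\,|\nabla(u-v)|^2$, which you drop, and the passage $\epsilon\to0$ is handled by dominated convergence. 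This introduces one step the paper never has to face, namely replacing $\tfrac12(f+g)\chi_{\{u=v\}}$ by $g\chi_{\{u=v\}}$, which reduces to $f=g$ a.e.\ on $\{u=v\}$. That identity is indeed true, and your density-point cone argument is one way to see it, but it is cleaner (and more standard) to invoke Stampacchia's chain rule twice: for $h\in W^{1,1}_{\rm loc}$ one has $\nabla h=0$ a.e.\ on $\{h=0\}$, and applying this to each $\partial_i h$ on the set $\{\partial_i h=0\}\supset\{h=0\}$ (up to a null set) gives $\nabla^2 h=0$ a.e.\ on $\{h=0\}$, whence $\Delta h=0$ and $f-g=-\Delta h+\vec b\cdot\nabla h+ch=0$ a.e.\ there. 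Both approaches lean on the assumed $C^2$ regularity of $u$ and $v$: the paper needs it to make the boundary integral over $\partial\{u-v>\sigma_k\}$ meaningful, you need it to control $\Delta(u-v)$ pointwise on $\{u=v\}$; the paper's route has the advantage of extending more transparently to the fractional case (the companion Lemma \ref{lcklem2}), where a pointwise chain-rule computation with $\Phi_\epsilon$ is not available.
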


\begin{lemma}\label{lcklem2}
Let $\Omega$ be a domain in $\mathbb{R}^n$. Assume $u(x), v(x) , f(x) ,g(x) \in C^2_{0}(\Omega) $, and satisfy
\begin{equation}\label{lcklem2.1}
\begin{array}{rl}
{\rm i)}&\displaystyle (-\Delta)^su(x)+\vec{b}(x)\cdot\nabla u(x)+c(x)u(x)=f(x)\ \ {\rm in}\ \ \Omega,\\
{\rm ii)}&\displaystyle (-\Delta)^sv(x)+\vec{b}(x)\cdot\nabla v(x)+c(x)v(x)=g(x)\ \ {\rm in}\ \ \Omega,
\end{array}
\end{equation}
where $\|\vec b(x)\|_{C^1(\Omega)}+\|c(x)\|_{L^\infty(\Omega)}<\infty$.  Then for $w(x)=\max\{u(x),v(x)\}$, it holds that
\begin{equation} \label{lcklem2.2}
(-\Delta)^sw(x)+\vec{b}(x)\cdot\nabla w(x)+c(x)w(x)\leq f(x)\chi_{u>v} + g(x)\chi_{u\leq v}\ \ {\rm in}\ \mathcal{D}'(\Omega).
\end{equation}
\end{lemma}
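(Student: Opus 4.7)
The plan is to reduce the problem to a distributional Kato-type inequality for the positive part $\phi \mapsto \phi^+ := \max\{\phi, 0\}$ under the fractional Laplacian, and then combine it with a routine identification of the drift and potential terms, paying attention to the level set $\{u = v\}$. The auxiliary statement to establish first is: for every $\phi \in C^2_0(\Omega)$,
\begin{equation*}
(-\Delta)^s \phi^+ \leq \chi_{\{\phi > 0\}}\,(-\Delta)^s \phi \quad \text{in } \mathcal{D}'(\Omega).
\end{equation*}

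Pointwise, where $\phi(x) > 0$ one writes $\phi = \phi^+ - \phi^-$ inside the singular integral and finds
\begin{equation*}
(-\Delta)^s\phi^+(x) = (-\Delta)^s\phi(x) - C_{n,s}\int_{\mathbb{R}^n} \frac{\phi^-(y)}{|x-y|^{n+2s}}\,dy \leq (-\Delta)^s\phi(x),
\end{equation*}
since $\phi^- \geq 0$. Where $\phi(x) \leq 0$, $\phi^+(x) = 0$ is a global minimum of the nonnegative function $\phi^+$, so $(-\Delta)^s\phi^+(x) \leq 0 = \chi_{\{\phi > 0\}}(x)\,(-\Delta)^s\phi(x)$. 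Since $\phi^+$ is Lipschitz with compact support, testing against a nonnegative $\varphi \in \mathcal{D}(\Omega)$ and using the symmetric bilinear representation of $\langle (-\Delta)^s \phi^+, \varphi \rangle$ transports these pointwise bounds into the distributional form; for $s \geq \tfrac12$, where $\phi^+$ lacks pointwise second-order regularity on $\{\phi = 0\}$, one first works with a smooth approximant of the positive part (e.g.\ $F_\varepsilon(t) = \tfrac12(t + \sqrt{t^2 + \varepsilon^2})$) and passes to the limit by dominated convergence using the Lipschitz bound $|F_\varepsilon(\phi(x)) - F_\varepsilon(\phi(y))| \leq |\phi(x) - \phi(y)|$.

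Applying the Kato inequality to $\phi := u - v \in C_0^2(\Omega)$, and writing $w = v + (u - v)^+$,
\begin{equation*}
(-\Delta)^s w \leq (-\Delta)^s v + \chi_{\{u > v\}}\bigl[(-\Delta)^s u - (-\Delta)^s v\bigr] = \chi_{\{u > v\}}(-\Delta)^s u + \chi_{\{u \leq v\}}(-\Delta)^s v
\end{equation*}
in $\mathcal{D}'(\Omega)$. The sets $\{u > v\}$ and $\{u < v\}$ are open and $w$ coincides with $u$ and $v$ respectively on them; on the closed set $\{u = v\}$ one has $w = u = v$ and $\nabla u = \nabla v$ a.e.\ (by the classical fact that $\nabla(u - v) = 0$ a.e.\ on $\{u - v = 0\}$). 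Hence, a.e.,
\begin{equation*}
\vec{b}\cdot\nabla w + c\,w = \chi_{\{u > v\}}(\vec{b}\cdot\nabla u + cu) + \chi_{\{u \leq v\}}(\vec{b}\cdot\nabla v + cv),
\end{equation*}
and adding this equality to the bound on $(-\Delta)^s w$ and substituting \eqref{lcklem2.1} yields \eqref{lcklem2.2}.

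The main obstacle is the nonlocal Kato inequality itself: the pointwise manipulation above is formal because of the principal value at $y = x$ and because $\phi^+$ is only Lipschitz on $\{\phi = 0\}$, which can be thick. The symmetric-kernel splitting already used in the paper's basic estimates (Theorem \ref{lckcor3} and Corollary \ref{lckcor4}) provides the right framework for passing between pointwise and distributional statements, and the smooth-approximation step outlined above converts the argument into a purely distributional one that is robust for the full range $s \in (0,1)$. Once the Kato inequality is in hand, the rest of the proof is essentially bookkeeping.
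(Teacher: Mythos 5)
Your strategy (reduce to a Kato inequality for $(-\Delta)^s(u-v)^+$, then identify the drift and zero-order terms a.e.\ using $\nabla(u-v)=0$ a.e.\ on $\{u=v\}$) is a genuine alternative to the paper's route, which instead proves the key inequality directly in weak form by testing $\int w(-\Delta)^s\varphi$, symmetrizing the kernel, and discarding the resulting nonpositive $U\times U^c$ contribution. However, there is a concrete gap in your Kato step. The convex approximant you propose, $F_\varepsilon(t)=\tfrac12(t+\sqrt{t^2+\varepsilon^2})$, satisfies $F_\varepsilon'(t)\to \chi_{t>0}+\tfrac12\chi_{t=0}$ as $\varepsilon\to 0$. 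Passing to the limit in $\int (-\Delta)^sF_\varepsilon(\phi)\,\varphi\le \int F_\varepsilon'(\phi)(-\Delta)^s\phi\,\varphi$ therefore yields
\begin{equation*}
(-\Delta)^s\phi^+\le \bigl[\chi_{\phi>0}+\tfrac12\chi_{\phi=0}\bigr](-\Delta)^s\phi\quad{\rm in}\ \mathcal{D}'(\Omega),
\end{equation*}
not the sharper $\chi_{\phi>0}(-\Delta)^s\phi$ you claim. Unlike the local case, the extra term does not vanish: on $\{\phi=0\}$ one has $\nabla\phi=0$ a.e.\ and even $D^2\phi=0$ a.e., so $-\Delta\phi=0$ a.e.\ there, but $(-\Delta)^s\phi(x)=-C_{n,s}\,{\rm P.V.}\!\int\phi(y)|x-y|^{-n-2s}dy$ is genuinely nonlocal and need not vanish.

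Carrying this through your reduction produces the right-hand side $f\chi_{u>v}+g\chi_{u<v}+\tfrac12(f+g)\chi_{u=v}$, whereas \eqref{lcklem2.2} asserts $f\chi_{u>v}+g\chi_{u\le v}$; on $\{u=v\}$ we have a.e.\ $f-g=(-\Delta)^su-(-\Delta)^sv$, which can be strictly positive, so $\tfrac12(f+g)>g$ on a set of positive measure in general, and your conclusion does not imply the lemma. The fix is either to use a one-sided convex approximant (e.g.\ $F_\varepsilon$ a smooth convex minorant of $t^+$ with $F_\varepsilon\equiv 0$ on $(-\infty,0]$ and $F_\varepsilon'\uparrow\chi_{t>0}$ pointwise, such as a mollification of $(t-\varepsilon)^+$), which restores the sharp indicator, or simply to argue via the symmetric bilinear splitting as in \eqref{3.2}, which avoids approximation altogether and gives $\chi_{\phi>0}$ directly. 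As a smaller point, the phrase about the bilinear representation ``transporting the pointwise bounds into distributional form'' is not a self-contained argument: the paper manipulates the double integral directly; it does not invoke a pointwise inequality for the Lipschitz function $\phi^+$, precisely because the principal value at regular points of $\{\phi=0\}$ may fail to converge for $s\ge\tfrac12$.
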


\begin{theorem}\label{lckcor1}
Let $\Omega$ be a domain in $\mathbb{R}^n$. Assume $u(x), v(x) , f(x) ,g(x) \in L^1_{\rm loc}(\Omega) $, and satisfy
\begin{equation}\label{lckcor1.1}
\begin{array}{rl}
&\displaystyle -\Delta u(x)+\vec{b}(x)\cdot\nabla u(x)+c(x)u(x)\leq f(x)\ \ {\rm in}\ \mathcal{D}'(\Omega),\\
&\displaystyle -\Delta v(x)+\vec{b}(x)\cdot\nabla v(x)+c(x)v(x)\leq g(x)\ \ {\rm in}\ \mathcal{D}'(\Omega),
\end{array}
\end{equation}
where $\|\vec b(x)\|_{C^1(\Omega)}+\|c(x)\|_{L^\infty(\Omega)}<\infty$.  Then for $w(x)=\max\{u(x),v(x)\}$, it holds that
\begin{equation} \label{l1}
-\Delta w(x)+\vec{b}(x)\cdot\nabla w(x)+c(x)w(x)\leq f(x)\chi_{u> v} + g(x)\chi_{u<v}+\max\{f(x),g(x)\}\chi_{u=v}\ \ {\rm in}\ \mathcal{D}'(\Omega).
\end{equation}
\end{theorem}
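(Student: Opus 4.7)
The plan is to reduce the distributional inequality to the smooth equality case of Lemma \ref{lcklem1} via mollification. First I promote Lemma \ref{lcklem1} from equalities to inequalities in the $C^2$ category: if smooth $u, v$ satisfy the two distributional inequalities, set $\tilde{f} := -\Delta u + \vec{b}\cdot\nabla u + cu$ (so $\tilde{f} \leq f$ pointwise) and analogously $\tilde{g} \leq g$; then Lemma \ref{lcklem1} applied to the equalities produced by $\tilde{f}, \tilde{g}$, combined with the nonnegativity of the characteristic functions, gives the inequality version in the smooth setting.

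Next, I mollify. Setting $u^\delta := J_\delta u$, $v^\delta := J_\delta v$, and $w^\delta := \max\{u^\delta, v^\delta\}$, convolving both given distributional inequalities with $j^\delta$ yields
\begin{equation*}
-\Delta u^\delta + \vec{b}\cdot\nabla u^\delta + c\, u^\delta \leq J_\delta f + R^\delta_f,
\end{equation*}
where $R^\delta_f := \bigl(\vec{b}\cdot\nabla u^\delta - J_\delta(\vec{b}\cdot\nabla u)\bigr) + \bigl(c\, u^\delta - J_\delta(cu)\bigr)$ is the standard mollifier commutator, which tends to $0$ in $L^1_{\rm loc}(\Omega)$ as $\delta \to 0$; the analogous bound holds for $v$ with error $R^\delta_g$. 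Applying the smooth inequality version of Lemma \ref{lcklem1} to $(u^\delta, v^\delta)$ gives
\begin{equation*}
-\Delta w^\delta + \vec{b}\cdot\nabla w^\delta + c\, w^\delta \leq \bigl(J_\delta f + R^\delta_f\bigr)\chi_{u^\delta > v^\delta} + \bigl(J_\delta g + R^\delta_g\bigr)\chi_{u^\delta \leq v^\delta}
\end{equation*}
in $\mathcal{D}'(\Omega)$.

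Finally, I test against an arbitrary nonnegative $\varphi \in \mathcal{D}(\Omega)$ and send $\delta \to 0$. The left-hand side passes to $\langle -\Delta w + \vec{b}\cdot\nabla w + cw, \varphi\rangle$ because $w^\delta \to w$ in $L^1_{\rm loc}(\Omega)$ by the $1$-Lipschitz continuity of $\max$, together with $\vec{b} \in C^1$ and $c \in L^\infty$. For the right-hand side I partition $\Omega = A \cup B \cup C$ with $A = \{u>v\}$, $B = \{u<v\}$, $C = \{u=v\}$. By Lebesgue differentiation, $u^\delta \to u$ and $v^\delta \to v$ almost everywhere, hence $\chi_{u^\delta > v^\delta}\chi_A \to \chi_A$ and $\chi_{u^\delta \leq v^\delta}\chi_B \to \chi_B$ a.e.; combined with $J_\delta f \to f$, $J_\delta g \to g$ in $L^1_{\rm loc}$, with $R^\delta_{f,g} \to 0$, and with dominated convergence, the contributions from $A$ and $B$ converge to $\int_A f\varphi\,dx$ and $\int_B g\varphi\,dx$, respectively.

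The main obstacle is the set $C = \{u = v\}$, where $\chi_{u^\delta > v^\delta}$ need not have an almost-everywhere limit. There I exploit the elementary pointwise bound $a\chi_E + b\chi_{E^c} \leq \max\{a,b\}$, which on $C$ yields
\begin{equation*}
\bigl(J_\delta f + R^\delta_f\bigr)\chi_{u^\delta > v^\delta} + \bigl(J_\delta g + R^\delta_g\bigr)\chi_{u^\delta \leq v^\delta} \leq \max\bigl\{J_\delta f + R^\delta_f,\ J_\delta g + R^\delta_g\bigr\}.
\end{equation*}
The right-hand side converges to $\max\{f,g\}$ in $L^1_{\rm loc}$ by the $1$-Lipschitz continuity of $\max$ and $R^\delta_{f,g} \to 0$, so $\limsup_{\delta \to 0} \int_C (\cdots)\varphi\,dx \leq \int_C \max\{f,g\}\varphi\,dx$. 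Combining the three regional estimates produces the distributional inequality claimed in Theorem \ref{lckcor1}, resolving the only genuine delicacy on $\{u = v\}$.
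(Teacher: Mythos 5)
Your proof is correct, and its skeleton — mollify, invoke Lemma \ref{lcklem1} on the mollified pair, then pass to the limit against a nonnegative test function — matches the paper's. The genuine difference lies in how the indicator functions are controlled in the limit $\delta\to 0$. The paper first passes to a subsequence $\delta_k$, applies Egorov's theorem to $u^{\delta_k}-v^{\delta_k}\to u-v$ to obtain uniform convergence off a small exceptional set $A_\epsilon$, and inserts a band parameter $\sigma$ to insulate the diagonal, splitting $1 = \chi_{\{u-v>\sigma\}\setminus A_\epsilon} + \chi_{\{u-v<-\sigma\}\setminus A_\epsilon} + \chi_{\{|u-v|\le\sigma\}\cup A_\epsilon}$; the conclusion then requires a triple limit $\delta\to 0$, $\epsilon\to 0$, $\sigma\to 0$. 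You instead decompose $\Omega$ directly into $A=\{u>v\}$, $B=\{u<v\}$, $C=\{u=v\}$ once and for all, and use the Lebesgue differentiation theorem to get $J_\delta(u-v)\to u-v$ a.e.\ along the full net $\delta\to 0$, whence $\chi_{u^\delta>v^\delta}\to 1$ a.e.\ on $A$ and $\chi_{u^\delta\le v^\delta}\to 1$ a.e.\ on $B$; the diagonal $C$ is dispatched in both arguments by the same elementary bound $a\chi_E + b\chi_{E^c}\le \max\{a,b\}$ together with the $1$-Lipschitz continuity of $\max$ in $L^1_{\rm loc}$. Your route avoids the subsequence extraction, Egorov's theorem, and the auxiliary parameter $\sigma$, giving a single-limit argument; the paper's Egorov approach does not need the a.e.\ convergence of mollifications (only $L^1_{\rm loc}$ convergence plus a subsequence), which is marginally more robust in settings where a.e.\ convergence of the approximating family is less immediate, but here is an unnecessary detour. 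Both proofs also need the minor extension of Lemma \ref{lcklem1} from equalities to pointwise inequalities in the smooth case, and both tacitly relax its $C^2_0$ hypothesis to $C^2$ on a precompact subdomain when applying it to mollifications that are not compactly supported; these are harmless and shared. One small difference: the paper normalizes $c\equiv c_0$ at the outset to suppress the $c$-commutator, while you keep a variable $c\in L^\infty$ and show $c\,u^\delta - J_\delta(cu)\to 0$ in $L^1_{\rm loc}$ directly; both work.
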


\begin{theorem}\label{lckcor2}
Let $\Omega$ be a domain in $\mathbb{R}^n$. Assume $u,v\in\mathcal{L}_{2s}$, $f, g\in L^1_{\rm loc}(\Omega)$, and satisfy
\begin{equation}\label{lckcor2.1}
\begin{array}{rl}
&\displaystyle (-\Delta)^su(x)+\vec{b}(x)\cdot\nabla u(x)+c(x)u(x)\leq f(x)\ \ {\rm in}\ \mathcal{D}'(\Omega),\\
&\displaystyle (-\Delta)^sv(x)+\vec{b}(x)\cdot\nabla v(x)+c(x)v(x)\leq g(x)\ \ {\rm in}\ \mathcal{D}'(\Omega),
\end{array}
\end{equation}
where $\|\vec b(x)\|_{C^1(\Omega)}+\|c(x)\|_{L^\infty(\Omega)}<\infty$.  Then for $w(x)=\max\{u(x),v(x)\}$, it holds that
\begin{equation}
(-\Delta)^sw(x)+\vec{b}(x)\cdot\nabla w(x)+c(x)w(x)\leq f(x)\chi_{u> v} + g(x)\chi_{u<v}+\max\{f(x),g(x)\}\chi_{u=v}\ {\rm in}\ \mathcal{D}'(\Omega).
\end{equation}
\end{theorem}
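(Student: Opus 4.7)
The plan is to reduce to the smooth equality case of Lemma~\ref{lcklem2} by mollification and then pass to the distributional limit; the appearance of $\max\{f,g\}$ on $\{u=v\}$ (rather than one of $f,g$) will arise precisely from the fact that the characteristic functions $\chi_{u^\delta>v^\delta}$ can oscillate on the coincidence set.

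First I would mollify: set $u^\delta=J_\delta u$, $v^\delta=J_\delta v$, $f^\delta=J_\delta f$, $g^\delta=J_\delta g$. These are smooth on $\mathbb{R}^n$, still in $\mathcal{L}_{2s}$, and converge to $u,v,f,g$ in $L^1_{\rm loc}(\Omega)$ (and $u^\delta,v^\delta$ in $\mathcal{L}_{2s}$). Since $j^\delta\ge 0$ and $(-\Delta)^s$ commutes with convolution, mollification preserves the distributional inequality and yields the pointwise bound on $\Omega_\delta:=\{x\in\Omega:\mathrm{dist}(x,\partial\Omega)>\delta\}$,
\begin{equation*}
(-\Delta)^s u^\delta+\vec b\cdot\nabla u^\delta+c\,u^\delta\ \le\ f^\delta+R^u_\delta,
\end{equation*}
where $R^u_\delta:=[\vec b\cdot\nabla u^\delta-J_\delta(\vec b\cdot\nabla u)]+[c\,u^\delta-J_\delta(cu)]$ is a Friedrichs-type commutator, with $R^u_\delta,R^v_\delta\to 0$ in $L^1_{\rm loc}(\Omega)$ by Lemma~\ref{lcklem3}.

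Next, set the smooth functions $\tilde f^\delta:=(-\Delta)^s u^\delta+\vec b\cdot\nabla u^\delta+c\,u^\delta$ and $\tilde g^\delta$ analogously; then $\tilde f^\delta\le f^\delta+R^u_\delta$ and $\tilde g^\delta\le g^\delta+R^v_\delta$ pointwise, and $(u^\delta,v^\delta,\tilde f^\delta,\tilde g^\delta)$ satisfy the equality hypotheses of Lemma~\ref{lcklem2}. That lemma applied to $w^\delta:=\max\{u^\delta,v^\delta\}$, followed by the trivial replacements on each characteristic set, produces
\begin{equation*}
(-\Delta)^s w^\delta+\vec b\cdot\nabla w^\delta+c\,w^\delta\ \le\ (f^\delta+R^u_\delta)\chi_{u^\delta>v^\delta}+(g^\delta+R^v_\delta)\chi_{u^\delta\le v^\delta}\quad\text{in }\mathcal{D}'(\Omega_\delta).
\end{equation*}
I then pass to the limit $\delta\to 0$ tested against an arbitrary nonneg $\varphi\in\mathcal{D}(\Omega)$. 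The left-hand side converges to the pairing against $w=\max\{u,v\}$ using $w^\delta\to w$ in $L^1_{\rm loc}$ and in $\mathcal{L}_{2s}$, together with the decay $|(-\Delta)^s\varphi(x)|=O((1+|x|)^{-n-2s})$ to control the nonlocal tail. Extracting a subsequence with a.e.\ convergence $u^{\delta_k}\to u$, $v^{\delta_k}\to v$, one has $\chi_{u^{\delta_k}>v^{\delta_k}}\to 1$ a.e.\ on $\{u>v\}$ and $\to 0$ on $\{u<v\}$; combined with $f^{\delta_k}+R^u_{\delta_k}\to f$ in $L^1_{\rm loc}$ (and the analogous statement for $g$), standard bounded/dominated convergence produces the limit integrands $f\chi_{u>v}$ and $g\chi_{u<v}$. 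On the coincidence set $\{u=v\}$, where no a.e.\ limit of $\chi_{u^{\delta_k}>v^{\delta_k}}$ exists in general, I would use the trivial bound
\begin{equation*}
(f^\delta+R^u_\delta)\chi+(g^\delta+R^v_\delta)(1-\chi)\ \le\ \max\{f^\delta+R^u_\delta,\ g^\delta+R^v_\delta\}\qquad\text{for every }\chi\in\{0,1\},
\end{equation*}
and conclude by Lipschitz continuity of $\max$, which yields convergence to $\max\{f,g\}$ in $L^1_{\rm loc}$.

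The main obstacle is exactly this last step on $\{u=v\}$: the approximating $\chi_{u^\delta>v^\delta}$ need not have any limit there, and the bound $\max\{f,g\}$ on the coincidence set is precisely what the theorem allows in order to absorb this oscillation (in contrast, the smooth Lemma~\ref{lcklem2} is strictly stronger, putting $g\chi_{u\le v}$ on this set). A secondary technical point, which however should follow from the tools already developed in the paper, is the uniform control of the commutators $R^u_\delta,R^v_\delta$ and of the nonlocal tails for $u,v\in\mathcal{L}_{2s}$, handled by Lemma~\ref{lcklem3} and by the fast decay of $(-\Delta)^s\varphi$ at infinity.
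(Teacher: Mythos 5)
Your argument is correct and follows essentially the same route the paper uses for the companion Laplacian case (Theorem \ref{lckcor1}), to which it says the proof of Theorem \ref{lckcor2} is similar: mollify, invoke the smooth-function Lemma \ref{lcklem2}, control the Friedrichs commutators via Lemma \ref{lcklem3}, and on the coincidence set $\{u=v\}$ absorb the indecision of $\chi_{u^\delta>v^\delta}$ by the pointwise bound $\max\{f^\delta,g^\delta\}$. The only notable difference is how the limit sets are identified: the paper normalizes $c\equiv c_0$ and uses Egorov's theorem with $\sigma$-bands $\{|u-v|\le\sigma\}$, whereas you keep $c$ general (so the $c$-commutator must be handled alongside, a short extension of Lemma \ref{lcklem3}) and instead rely on a.e.\ convergence of the mollifications at Lebesgue points together with $L^1_{\rm loc}$ convergence of the data --- a cosmetic variation of the same idea.
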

\subsection{Some important estimates appear in the proof}
%The first one is
%a simple application of the work \cite{ros}.
%\begin{lemma}\label{l 3.0}
%Assume that $u\in \mathcal{L}_{2s}$ and $(-\Delta)^s u(x)=0,\ \forall  x \in B_1$, then
%\begin{equation}
%|u(x)|\leq C\int_{\mathbb{R}^n}\frac{|u(y)|}{1+|y|^{2s}}dy\leq C, \ \ \ \forall  x \in B_{\frac{1}{2}}.
%\end{equation}
%\end{lemma}

%\begin{lemma}
%If $w(x),\ f(x)\in L_{{\rm loc}}^1(\Omega)$ for the domain $\Omega\subset\mathbb{R}^n$ with $n\geq1$, then
%\begin{equation}
%((-\Delta)w)^\delta(x)=(-\Delta)w^\delta(x)\ \ {\rm in}\ \ \Omega^\delta,
%\end{equation}
%where $\Omega^\delta=\{x\in\mathbb{R}^n|B_\delta(x) \subset \Omega\}$ and  $\psi^\delta(x)=\rho^\delta\ast\psi(x)$ means the standard mollification.
%\end{lemma}
%\begin{proof}Trivial.\end{proof}

\begin{lemma}\label{l 3.1}
Let $\Omega$ be a domain in $\mathbb{R}^n$. Assume $w(x)\in \mathcal{L}_{2s}$, $\ f(x)\in L_{{\rm loc}}^1(\Omega)$, and satisfy
\begin{equation}
(-\Delta)^sw(x)=f(x)\ \ \ {\rm in}\ \mathcal{D}'(\Omega)
\end{equation}
then
\begin{equation}
(-\Delta)^sJ_\delta w(x)=J_\delta f(x)\ \ {\rm in}\ \ \Omega^\delta,
\end{equation}
where $\Omega^\delta=\{x\in\mathbb{R}^n|B_\delta(x) \subset \Omega\}$.

In particular, if $(-\Delta)^sw(x)\geq0$, i.e. $w(x)$ is fractional super-harmonic in the domain $\Omega\subset\mathbb{R}^n$, then $J_\delta w(x)$ is also fractional super-harmonic in the domain $\Omega^\delta$.
\end{lemma}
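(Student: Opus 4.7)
The plan is to test the claimed identity $(-\Delta)^s J_\delta w = J_\delta f$ against an arbitrary $\varphi \in \mathcal{D}(\Omega^\delta)$ and use duality together with Fubini to transfer the computation back to the hypothesis $(-\Delta)^s w = f$ in $\mathcal{D}'(\Omega)$. The whole argument is structurally the same as in the classical case; the point is to check that the fractional, nonlocal features cause no trouble.

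First I would verify the basic well-posedness. Since $w \in \mathcal{L}_{2s} \subset L^1_{\rm loc}(\mathbb{R}^n)$, the convolution $J_\delta w$ is a smooth function on $\mathbb{R}^n$, and a weighted Young-type estimate using the compact support of $j^\delta$ shows $J_\delta w \in \mathcal{L}_{2s}$, so $(-\Delta)^s J_\delta w$ is a well-defined distribution. Moreover, by the very definition of $\Omega^\delta$, the mollification $J_\delta \varphi$ of any $\varphi \in \mathcal{D}(\Omega^\delta)$ is supported in $\Omega$, hence $J_\delta \varphi \in \mathcal{D}(\Omega)$ is admissible as a test function against $(-\Delta)^s w$.

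Next I would write
\begin{equation*}
\langle (-\Delta)^s J_\delta w, \varphi \rangle
= \int_{\mathbb{R}^n} (J_\delta w)(x)\,(-\Delta)^s \varphi(x)\,dx
= \iint j^\delta(x-y)\,w(y)\,(-\Delta)^s \varphi(x)\,dy\,dx,
\end{equation*}
and apply Fubini. Exchanging the order gives $\int w(y)\,J_\delta\!\left((-\Delta)^s \varphi\right)(y)\,dy$. Because both $J_\delta$ and $(-\Delta)^s$ act by Fourier multipliers on the Schwartz class, they commute on $\varphi \in C_c^\infty$, so $J_\delta((-\Delta)^s\varphi) = (-\Delta)^s(J_\delta \varphi)$. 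Invoking the distributional hypothesis against $J_\delta \varphi \in \mathcal{D}(\Omega)$ then yields
\begin{equation*}
\int w(y)\,(-\Delta)^s(J_\delta \varphi)(y)\,dy
= \langle f, J_\delta \varphi \rangle
= \int_\Omega f(y)\,J_\delta \varphi(y)\,dy,
\end{equation*}
and a second Fubini together with the radial symmetry of $j^\delta$ converts this into $\int_{\Omega^\delta} J_\delta f(x)\,\varphi(x)\,dx = \langle J_\delta f, \varphi \rangle$, giving exactly the claimed identity on $\Omega^\delta$.

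The main technical point, and the only place where the proof is not entirely formal, is justifying the two Fubini applications and the commutation $J_\delta(-\Delta)^s \varphi = (-\Delta)^s J_\delta \varphi$. All three rest on the pointwise decay bound $|(-\Delta)^s \varphi(x)| \leq C(1+|x|)^{-n-2s}$ for $\varphi \in C_c^\infty$ (obtained by splitting the defining singular integral near and far from $\mathrm{supp}\,\varphi$), combined with the fact that $w$ and $J_\delta w$ lie in $\mathcal{L}_{2s}$; one then checks that the double integrand $|j^\delta(x-y)\,w(y)\,(-\Delta)^s\varphi(x)|$ is absolutely integrable since $J_\delta|(-\Delta)^s\varphi|(y) \lesssim (1+|y|)^{-n-2s}$ by the compact-support of $j^\delta$. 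Finally, the ``in particular'' clause is immediate: when $f = (-\Delta)^s w \geq 0$ as an $L^1_{\rm loc}$ function, its mollification $J_\delta f$ is pointwise nonnegative, so $(-\Delta)^s J_\delta w \geq 0$ on $\Omega^\delta$.
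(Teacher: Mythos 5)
Your argument is correct and follows essentially the same route as the paper: both proofs hinge on Fubini to interchange the mollification with the fractional Laplacian (equivalently, that $(-\Delta)^s$ commutes with convolution against $j^\delta$) together with the geometric observation that mollifying from $\Omega^\delta$ keeps the relevant test function inside $\mathcal{D}(\Omega)$, so the distributional hypothesis may be invoked. The only cosmetic difference is that the paper computes $(-\Delta)^s J_\delta w(x)$ pointwise and recognizes the result as $\langle (-\Delta)^s w, j^\delta_x\rangle$, whereas you test against an arbitrary $\varphi\in\mathcal{D}(\Omega^\delta)$ and use the commutation $J_\delta(-\Delta)^s\varphi = (-\Delta)^s J_\delta\varphi$.
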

\begin{proof}
By a direct computation, we know
\begin{equation}\label{I 3.1(lck)}
\begin{array}{rl}
&\displaystyle\int_{\mathbb{R}^n\backslash \{|x-y|\leq\epsilon\}}\frac{1}{|x-y|^{n+2s}}(J_\delta w(x)-J_\delta w(y)) dy\\
=&\displaystyle\int_{\mathbb{R}^n\backslash \{|x-y|\leq\epsilon\}}\frac{1}{|x-y|^{n+2s}}\Big(\int_{\mathbb{R}^n} j^\delta(x-z)w(z) dz -\int_{\mathbb{R}^n} j^\delta(y-z)w(z)dz\Big) dy\\
=&\displaystyle \int_{\mathbb{R}^n}  w(z)\int_{\mathbb{R}^n\backslash \{|x-y|\leq\epsilon\}}\frac{j^\delta(x-z)-j^\delta(y-z)}{|x-y|^{n+2s}}dy dz.
\end{array}
\end{equation}

Taking $\epsilon\rightarrow 0$ in (\ref{I 3.1(lck)}) and noticing the definition of  the fractional Laplacian, we know
\begin{equation}
 (-\Delta)^sJ_\delta w(x)=\int_{\mathbb{R}^n}w(z)  (-\Delta)_x^sj^\delta(x-z)  dz=\int_{\mathbb{R}^n}w(z)  (-\Delta)_z^sj^\delta(x-z)  dz,
\end{equation}
where we have used the fact that $(-\Delta)_x^sg(x-z)=(-\Delta)_z^sg(x-z)$. Notice also:
\begin{equation}
J_\delta f(x)=
J_\delta((-\Delta)^sw)(x)=\int_{\mathbb{R}^n}w(z)(-\Delta)_z^sj^\delta(x-z)dz\ \ {\rm in}\ \ \Omega^\delta.
\end{equation}

Lemma \ref{l 3.1} follows.
\end{proof}

\begin{lemma}\label{lcklem3}
Let $\Omega$ be a domain in $\mathbb{R}^n$,  $u(x)\in L^1_{\rm loc}(\Omega)$ and $\|\vec b(x)\|_{C^1(\Omega)}<\infty$, then
\begin{equation}
 N_\delta(x):=\vec b(x)\cdot\nabla J_\delta u(x)-J_\delta(\vec b\cdot\nabla u)(x)\rightarrow 0\ \ {\rm in}\ L^1_{\rm loc}(\Omega),\ {\rm as}\ \delta\rightarrow0^+.
\end{equation}
\end{lemma}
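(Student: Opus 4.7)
The plan is to establish the Friedrichs-type commutator estimate in three steps: first derive a workable integral representation of $N_\delta$, then bound it uniformly in $\delta$ by $\|u\|_{L^1}$, and finally conclude by smooth approximation.

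\textbf{Step 1 (Representation).} Because $u\in L^1_{\rm loc}(\Omega)$ and $\vec b\in C^1(\Omega)$, the product $\vec b\cdot\nabla u$ is defined distributionally by $\langle \vec b\cdot\nabla u,\varphi\rangle = -\int u\,\nabla\!\cdot\!(\vec b\varphi)\,dx$. Testing against $j^\delta_x$ and using $\nabla_x j^\delta(x-y)=-\nabla_y j^\delta(x-y)$, a direct computation yields, for $x\in\Omega^\delta$,
\begin{equation}\label{l53.rep}
N_\delta(x)=\int_{\mathbb{R}^n}[\vec b(y)-\vec b(x)]\cdot\nabla_y j^\delta(x-y)\,u(y)\,dy+\int_{\mathbb{R}^n}(\nabla\!\cdot\!\vec b)(y)\,j^\delta(x-y)\,u(y)\,dy.
\end{equation}

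\textbf{Step 2 (Uniform bound).} Fix a compact $K\subset\Omega$ and let $K'=K+\overline{B_{\delta_0}}\Subset\Omega$ for some fixed small $\delta_0>0$; take $\delta\le\delta_0$. For the second term in \eqref{l53.rep}, the standard mollification bound gives
\[
\Big\|\int(\nabla\!\cdot\!\vec b)(y)\,j^\delta(x-y)u(y)\,dy\Big\|_{L^1(K)}\le \|\nabla\!\cdot\!\vec b\|_{L^\infty(K')}\|u\|_{L^1(K')}.
\]
For the first term, the mean value theorem gives $|\vec b(y)-\vec b(x)|\le \|\vec b\|_{C^1(K')}|x-y|$ for $|x-y|\le\delta_0$, while $|\nabla_y j^\delta(x-y)|\le C\delta^{-(n+1)}\chi_{\{|x-y|\le\delta\}}$. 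Hence the kernel is pointwise dominated by $C\|\vec b\|_{C^1(K')}\delta^{-n}\chi_{\{|x-y|\le\delta\}}$, which is a mollifier-type kernel of $L^1$-operator norm bounded independently of $\delta$. Combining gives the key a priori estimate
\begin{equation}\label{l53.bd}
\|N_\delta[u]\|_{L^1(K)}\le C\|u\|_{L^1(K')},
\end{equation}
with $C=C(\|\vec b\|_{C^1(K')},K,K')$ independent of $\delta$.

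\textbf{Step 3 (Smooth case and density).} When $u\in C^1(\Omega)$, both $\vec b\cdot\nabla J_\delta u$ and $J_\delta(\vec b\cdot\nabla u)$ converge locally uniformly to $\vec b(x)\cdot\nabla u(x)$ as $\delta\to0$, so $N_\delta\to0$ in $L^1_{\rm loc}$ for smooth $u$. For general $u\in L^1_{\rm loc}(\Omega)$ and $\varepsilon>0$, pick $u_k\in C^\infty(\Omega)$ with $\|u-u_k\|_{L^1(K')}<\varepsilon/(2C)$, where $C$ is the constant from \eqref{l53.bd}. By linearity $N_\delta[u]=N_\delta[u_k]+N_\delta[u-u_k]$, so \eqref{l53.bd} yields $\|N_\delta[u-u_k]\|_{L^1(K)}\le\varepsilon/2$, and $\|N_\delta[u_k]\|_{L^1(K)}<\varepsilon/2$ once $\delta$ is small enough. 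This gives $N_\delta[u]\to 0$ in $L^1(K)$, proving the lemma.

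\textbf{Main obstacle.} The delicate point is Step 2: the kernel $[\vec b(y)-\vec b(x)]\cdot\nabla_y j^\delta(x-y)$ involves $\nabla j^\delta$, which blows up like $\delta^{-(n+1)}$, so a naive pointwise bound diverges. The cancellation $\vec b(y)-\vec b(x)=O(|y-x|)=O(\delta)$ on the support is exactly what is needed to recover the critical scaling $\delta^{-n}\chi_{\{|x-y|\le\delta\}}$, yielding a $\delta$-uniform operator bound. Without this cancellation, the density argument in Step 3 fails, so this algebraic identity is the heart of the proof.
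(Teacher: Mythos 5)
Your proof is correct, and it takes a genuinely different route from the paper's. You establish the same representation of the commutator $N_\delta$ (your equation \eqref{l53.rep} is exactly the paper's ${\rm div}_y[(\vec b(y)-\vec b(x))j^\delta(x-y)]$ expanded), and you also use the same crucial cancellation $|\vec b(y)-\vec b(x)|\le\|\vec b\|_{C^1}|x-y|$ to recover the critical $\delta^{-n}$ scaling. The divergence comes after that: you turn this pointwise kernel bound into a $\delta$-uniform $L^1\to L^1$ operator estimate and then conclude by the Friedrichs commutator template (verify convergence on the dense subclass of smooth $u$, extend by density and the uniform bound). The paper instead subtracts $u(x)$ from $u(y)$ inside the integral — legitimate because the kernel $\mathrm{div}_y[(\vec b(y)-\vec b(x))j^\delta(x-y)]$ integrates to zero over $B_\delta(x)$ — to get $|N_\delta(x)|\le \frac{C}{\delta^n}\int_{B_\delta(x)}|u(y)-u(x)|\,dy$, then invokes the Lebesgue differentiation theorem for the a.e.\ pointwise limit and Egorov's theorem plus absolute continuity of the integral to upgrade to $L^1_{\rm loc}$ convergence. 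Your density argument is the cleaner, more modular version — it cleanly isolates the operator bound as a reusable lemma and avoids the somewhat delicate Egorov step — while the paper's argument works directly at Lebesgue points and never appeals to approximation by smooth functions. Both are complete proofs.

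Two small remarks. First, your Step 3 needs $C^\infty$ to be dense in $L^1(K')$, which is standard but worth stating; the mollifications $J_\eta u$ themselves serve, giving a self-contained argument. Second, when you write $|\nabla_y j^\delta(x-y)|\le C\delta^{-(n+1)}$ you are implicitly using the normalization $j^\delta(y)=\delta^{-n}j(y/\delta)$; the paper's displayed formula $j^\delta(y)=\frac{1}{\delta}j(\frac{y}{\delta})$ appears to be a typographical error (it would not integrate to one for $n\ge 2$), so your convention is the intended one.
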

\begin{proof}
For ant  subdomain $\Omega'\subset\subset\Omega$. Define a real number $d=\frac{1}{2}d(\partial\Omega'',\bar{\Omega'})>0$, and let $\delta\in(0,d)$. For any $x\in\Omega'$, we  calculate
\begin{equation}
\begin{array}{rl}
N_\delta(x)=&\displaystyle\vec b(x)\cdot\nabla J_\delta u(x)-J_\delta (\vec b\cdot\nabla u)(x)\\
=&\displaystyle\int_{B_\delta(x)} u(y)\vec b(x)\cdot\nabla_xj^\delta(x-y) dy+\int_{B_\delta(x)} u(y){\rm div}_y(\vec b(y)j^\delta(x-y)) dy\\
=&\displaystyle-\int_{B_\delta(x)} u(y)\vec b(x)\cdot\nabla_yj^\delta(x-y) dy+\int_{B_\delta(x)} u(y){\rm div}_y(\vec b(y)j^\delta(x-y)) dy\\
=&\displaystyle\int_{B_\delta(x)} u(y){\rm div}_y[(\vec b(y)-\vec b(x))j^\delta(x-y)]dy\\
=&\displaystyle\int_{B_\delta(x)}(u(y)-u(x)){\rm div}_y[(\vec b(y)-\vec b(x))j^\delta(x-y)]dy.
\end{array}
\end{equation}
Hence
\begin{equation}
\begin{array}{rl}
|N_\delta(x)|\leq&\displaystyle\int_{B_\delta(x)}|u(y)-u(x)|\underbrace{\big|{\rm div}_y[(\vec b(y)-\vec b(x))j^\delta(x-y)]\big|}_{\leq C_1\delta^{-n}}dy\\
\leq&\displaystyle\frac{ C_1 }{\delta^n}\int_{B_\delta(x)}|u(y)-u(x)|dy=:M_\delta(x).
\end{array}
\end{equation}

Lebesgue differentiation theorem shows that $M_\delta(x)\rightarrow0$ as $\delta\rightarrow0^+$  at Lebesgue points of $u(x)$. Choose a subsequence $\delta_k\rightarrow0$ such that $\lim_{k\rightarrow\infty}\int_{\Omega'}M_{\delta_k}(x)dx=\limsup_{\delta\rightarrow 0}\int_{\Omega'}M_\delta(x)dx$. The Egorov's theorem then gives that $\forall \epsilon>0, \exists A_\epsilon\subset\Omega'\ (\mu(A_\epsilon)<\epsilon)$ and $M_{\delta_k}(x)$ converges to $0$ uniformly on $\Omega'\backslash A_\epsilon$ as $\delta\rightarrow0^+$, and

\begin{equation}\label{lcklem3.1}
\begin{array}{rl}
\displaystyle\int_{\Omega'} M_{\delta_k}(x)dx
\leq&\displaystyle\int_{\Omega'\backslash A_\epsilon}M_{\delta_k}(x) dx+\frac{C_1}{\delta_k^n}\int_{A_\epsilon}\int_{B_{\delta_k}}|u(x+\xi)|+|u(x)|d\xi dx\\
\leq&\displaystyle\int_{\Omega'\backslash A_\epsilon}M_{\delta_k}(x) dx+2C_2\sup_{\xi\in B_{\delta_k}}\int_{A_\epsilon}|u(x+\xi)|dx.
\end{array}
\end{equation}

Letting  $k\rightarrow\infty$  in (\ref{lcklem3.1}), we get
\begin{equation}\label{lcklem3.2}
\limsup_{\delta\rightarrow0^+}\int_{\Omega'}M_\delta(x)dx
\leq 2C_2\sup_{\xi\in B_d}\int_{A_\epsilon}|u(x+\xi)|dx.
\end{equation}

Taking $\epsilon\rightarrow0^+$ in (\ref{lcklem3.2}) then, we derive by the integrability $u\in L^1_{\rm loc}(\Omega)$ that,
\begin{equation}
\lim_{\epsilon\rightarrow0}\sup_{\xi\in B_d}\int_{A_\epsilon}|u(x+\xi)|dx=0,
\end{equation}
hence
\begin{equation}
\limsup_{\delta\rightarrow0^+}\int_{\Omega'}|N_\delta(x)|dx\leq\limsup_{\delta\rightarrow0^+}\int_{\Omega'}M_\delta(x)dx
\leq 0.
\end{equation}

This proves Lemma \ref{lcklem3}.
\end{proof}

\begin{lemma}\label{lcklem4}
Let $\Omega$ be a domain in $\mathbb{R}^n$, $u(x)$ smooth and $\|\vec b(x)\|_{C^1(\Omega)}\leq K$, then for $x\in\Omega$,
\begin{equation}\label{lcklem4.1}
|J_\delta(\vec b\cdot\nabla u^+)(x)-\vec b(x)\cdot\nabla J_\delta u^+(x)|\leq  K\delta\|\nabla u\|_{L^\infty(\{ u>0\})}\chi_{ u>0 }^\delta(x).
\end{equation}
\end{lemma}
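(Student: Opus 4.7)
The plan is to mimic the cancellation argument from Lemma \ref{lcklem3}, the new wrinkle being the non-smoothness of $u^+$. The essential observation is that when $u$ is smooth, $u^+=\max\{u,0\}$ is Lipschitz and $\nabla u^+=\chi_{u>0}\nabla u$ almost everywhere (Stampacchia's lemma, together with the fact that $\nabla u=0$ a.e.\ on $\{u=0\}$). This identity is exactly what will produce the factor $\chi_{u>0}^\delta(x)$ on the right-hand side of the desired inequality.

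After that, the calculation is short. First I would integrate by parts in the mollifier (no boundary terms, since $j^\delta$ is compactly supported in $B_\delta$) to rewrite $\nabla J_\delta u^+(x)=J_\delta(\nabla u^+)(x)=J_\delta(\chi_{u>0}\nabla u)(x)$. The term $J_\delta(\vec b\cdot\nabla u^+)(x)$ equals $J_\delta(\chi_{u>0}\,\vec b\cdot\nabla u)(x)$ by the same identity, applied this time to the product $\vec b\cdot\nabla u$. Subtracting the two expressions collapses the difference to the single integral
\[
\int_{\mathbb{R}^n}j^\delta(x-y)\,\chi_{u>0}(y)\,(\vec b(y)-\vec b(x))\cdot\nabla u(y)\,dy,
\]
after which the $C^1$-bound $|\vec b(y)-\vec b(x)|\le K\delta$ on $B_\delta(x)$ and the pointwise bound $|\nabla u(y)|\le\|\nabla u\|_{L^\infty(\{u>0\})}$ on $\{u>0\}$ immediately yield the estimate, since the remaining integral is precisely $\chi_{u>0}^\delta(x)$.

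The only point requiring a word of justification, and the place where I expect the sole (mild) obstacle, is the a.e.\ identity $\nabla u^+=\chi_{u>0}\nabla u$ used twice above; this is a standard consequence of Stampacchia's lemma for smooth $u$ and so poses no real difficulty, but it is the one ingredient genuinely new compared with the computation in Lemma \ref{lcklem3}. Everything else is symbol-pushing of the same flavor as there.
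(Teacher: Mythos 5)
Your proof is correct and is precisely the ``direct calculation'' the paper alludes to: after invoking the a.e.\ identity $\nabla u^+ = \chi_{u>0}\nabla u$ (valid since $u$ is smooth and $u^+$ is Lipschitz) and the commutation $\nabla J_\delta u^+ = J_\delta(\nabla u^+)$, both quantities reduce to integrals against $j^\delta(x-\cdot)\chi_{u>0}$, and their difference is controlled by $|\vec b(y)-\vec b(x)|\le K\delta$ on $B_\delta(x)$ and $|\nabla u|\le\|\nabla u\|_{L^\infty(\{u>0\})}$ on $\{u>0\}$, yielding exactly the factor $\chi_{u>0}^\delta(x)$. This matches the paper's intended argument.
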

\begin{proof}The proof is by direct calculation.
\end{proof}

\subsection{Proof of the lemmas}
\subsubsection{The proof of Lemma \ref{lcklem1}}
\begin{proof}
We now prove the lemma in three steps.

\textit{Step 1.} We first assume that $v(x)=g(x)=0$, $\partial\{x\in\Omega|u(x)>0\}$ is an $n-1$ dimensional $C^1$-manifold and derive the desired results (\ref{lcklem1.2}).
Let $\varphi(x)\in C_0^{\infty}(\Omega)$ be a nonnegative test function and $\nu(x)$ denote the exterior unit normal vector of $\partial\{x\in\Omega|u(x)>0\}$.
Then we have
\begin{equation}\label{lcklem1.3}
\begin{array}{rl}
&\!\!\!\!\displaystyle\int_{\Omega}-w(x)\Delta\varphi(x)dx
=\int_{\{x\in\Omega|u(x)>0\}}-u(x)\Delta\varphi(x)dx\\
&\displaystyle =\int_{\{x\in\Omega|u(x)>0\}}-\varphi(x)\Delta u(x) dx+\int_{\partial\{x\in\Omega|u(x)>0\}}\varphi(x)\frac{\partial u}{\partial \nu}(x)-u(x)\frac{\partial\varphi}{\partial \nu}(x)d\sigma\\
&\displaystyle =\int_{\{x\in\Omega|u(x)>0\}}-\varphi(x)\Delta u(x)dx+\int_{\partial\{x\in\Omega|u(x)>0\}}\varphi(x)\frac{\partial u}{\partial \nu}(x) d\sigma\\
&\displaystyle \leq\int_{\Omega}-\chi_{u>0}(x)\varphi(x)\Delta u(x) dx.
\end{array}
\end{equation}
Here we used the fact that $\frac{\partial u}{\partial \nu}\leq 0$, $u=0$ on $\partial\{u>0\}\cap\Omega$, and that $\varphi=0$, $\frac{\partial\varphi}{\partial\nu}=0$ on $\partial\Omega$.
\begin{equation}\label{lcklem1.4}
\begin{array}{rl}
&\displaystyle\int_{\Omega} w(x)
\big[\!-\!{\rm div}(\vec{b}(x)\varphi(x))+c(x)\varphi(x)\big]dx\\
=&\displaystyle\int_{\{x\in\Omega|u(x)>0\}} u(x)\big[\!-\!{\rm div}(\vec{b}(x)\varphi(x))+c(x)\varphi(x)\big]dx\\
=&\displaystyle \int_{\{x\in\Omega|u(x)>0\}}\big[\vec b(x)\cdot\nabla u(x)+c(x)u(x) \big]\varphi(x) dx\\
&\displaystyle-\int_{\partial\{x\in\Omega|u(x)>0\}}u(x)\varphi(x)\vec{b}(x)\cdot \nu(x) d\sigma\\
=&\displaystyle \int_{\Omega}\big[\vec b(x)\cdot\nabla u(x)+c(x)u(x) \big]\chi_{u>0}(x)\varphi(x) dx.
\end{array}
\end{equation}
Adding up (\ref{lcklem1.3}) and (\ref{lcklem1.4}):
\begin{equation}\label{lck528}
\int_{\Omega}w(x)[-\Delta\varphi(x)-{\rm div}(\vec{b}(x)\varphi(x))+c(x)\varphi(x)]dx\leq\int_{\Omega}\varphi(x)f(x)\chi_{u>0}(x)dx.
\end{equation}

Hence, $-\Delta w(x)+\vec b(x)\cdot\nabla w(x)+c(x)w(x)\leq f(x)\chi_{u>0}(x)$, in $\mathcal{D}'(\Omega)$.

\textit{Step 2.} In this step, we will drop the assumption that $\{x\in\Omega|u(x)>0\}$ has $C^1$-boundary.
Denote $u_\sigma=u-\sigma$ for $\sigma>0$.
By Sard's theorem, we can choose a sequence $\{\sigma_k\}\rightarrow 0 $ such that for each $k$,  the set $\{x\in\Omega|u_{\sigma_k}(x)>0\}$ has $C^1$-boundary.
Applying \eqref{lck528} to $w_{\sigma_k}=\max\{u_{\sigma_k}(x),0\}$, we have
\begin{equation}\label{lcklem1.6}
\int_{\Omega}w_{\sigma_k}(x)[-\Delta\varphi(x)-{\rm div}(\vec{b}(x)\varphi(x))+c(x)\varphi(x)]dx\leq\int_{\Omega}\varphi(x)(f(x)-\sigma_kc(x))\chi_{u>\sigma_k}(x)dx,
\end{equation}
where $\varphi(x)\in C_0^{\infty}(\Omega)$ is a nonnegative test function.
In (\ref{lcklem1.6}) we have
\begin{enumerate}
\item $w(x)-\sigma\leq w_\sigma(x)\leq w(x)$, hence $w_{\sigma_k}\rightarrow w$ in $L^1_{\rm loc}(\Omega)$, as $k\rightarrow\infty$;
\item $\displaystyle \bigcup_{k=1}^\infty\{x\in\Omega|u(x)>\sigma_k\}=\{x\in\Omega|u(x)>0\}$, where $\big\{\{x\in\Omega|u(x)>\sigma_k\}\big\}_{k=1}^\infty$ is a sequence of  monotonically increasing sets, hence $\chi_{u>\sigma_k}\rightarrow\chi_{u>0}$ in $L^1_{\rm loc}(\Omega)$, as $k\rightarrow\infty$;
\item $c(x)$ is bounded;
\item $\varphi$ has compact support.
\end{enumerate}

Letting $k\rightarrow\infty$, we get
\begin{equation}\label{lcklem1.7}
\int_{\mathbb{R}^n}w(x)[-\Delta \varphi(x)-{\rm div}(\vec{b}(x)\varphi(x))+c(x)\varphi(x)]dx\leq\int_{\mathbb{R}^n}\varphi(x)f(x)\chi_{u>0}(x)dx,
\end{equation}
i.e. $-\Delta w(x)+\vec b(x)\cdot\nabla w(x)+c(x)w(x)\leq f(x)\chi_{u>0}(x)$, in $\mathcal{D}'(\Omega)$.

\textit{Step 3.} Consider the general case that $v(x)\neq0$. Applying \textit{Step 2} to $u(x)-v(x)$, we derive
\begin{equation}\label{lcklem1.8}
-\Delta h(x)+\vec b(x)\cdot\nabla h(x)+c(x)h(x)\leq (f(x)-g(x))\chi_{u>v}(x)\ \ {\rm in}\ \mathcal{D}'(\Omega),
\end{equation}where $h(x)=\max\{u(x)-v(x),0\}$.
Adding (\ref{lcklem1.1})$_{\rm ii}$ to (\ref{lcklem1.8}) we derive
\begin{equation*}
-\Delta w(x)+\vec{b}(x)\cdot\nabla w(x)+c(x)w(x)\leq f(x)\chi_{u>v} + g(x)\chi_{u\leq v}\ \ {\rm in}\ \mathcal{D}'(\Omega).
\end{equation*}
This proves Lemma \ref{lcklem1}.  \end{proof}

\subsubsection{The proof of Lemma \ref{lcklem2}}
\begin{proof}We only focus on the statement that for $w(x)=\max\{u(x),0\}$, if $\partial\{x\in\Omega|u(x)>0\}$ is an $n-1$ dimensional manifold, then
\begin{equation}\label{3.1}
\int_{\mathbb{R}^n} w(x)(-\Delta)^s\varphi(x)dx\leq\int_{\{x\in\Omega|u(x)>0\}}(-\Delta)^{s}u(x)\varphi(x)dx,
\end{equation}
for any given $0\leq\varphi(x)\in C_0^\infty(\mathbb{R}^n)$, and then the rest of the proof follows exactly the same as  Lemma $\ref{lcklem1}$.

Denote $U=\{x\in\Omega|u(x)>0\}$. For a fixed $ \epsilon >0$, it holds
\begin{equation}\label{3.2}
\begin{array}{ll}
&\displaystyle\int_{\mathbb{R}^n}w(x)\int_{\mathbb{R}^n\backslash |x-y|\leq\epsilon}\frac{\varphi(x)-\varphi(y)}{|x-y|^{n+2s}}dydx
=\int_U \int_{\mathbb{R}^n\backslash |x-y|\leq\epsilon}u(x)\frac{\varphi(x)-\varphi(y)}{|x-y|^{n+2s}}dydx\\
=&\displaystyle\int_U\int_{\mathbb{R}^n\backslash |x-y|\leq\epsilon}\frac{\varphi(x)(u(x)-u(y))}{|x-y|^{n+2s}}dydx
+\int_U\int_{\mathbb{R}^n\backslash |x-y|\leq\epsilon}\frac{u(y)\varphi(x)-u(x)\varphi(y)}{|x-y|^{n+2s}}dydx\\
=&\displaystyle\int_U\int_{\mathbb{R}^n\backslash |x-y|\leq\epsilon}\frac{\varphi(x)(u(x)-u(y))}{|x-y|^{n+2s}}dydx\\
&\displaystyle+\underbrace{\int_U\int_{U\backslash |x-y|\leq\epsilon}\frac{u(y)\varphi(x)-u(x)\varphi(y)}{|x-y|^{n+2s}}dydx}_{=0,\ {\rm since\ changing\ the\ place\ of\ }x\ {\rm and}\ y,\ {\rm the\ value\ remains\ fixed}}\\
&+\displaystyle \int_U\int_{U^c\backslash |x-y|\leq\epsilon}\underbrace{\frac{u(y)\varphi(x)-u(x)\varphi(y)}{|x-y|^{n+2s}}}_{\leq0,\ {\rm since}\ u(y)\leq0\ {\rm and}\ u(x)>0}dydx\\
\leq &\displaystyle\int_U\varphi(x)\int_{\mathbb{R}^n\backslash |x-y|\leq\epsilon}\frac{u(x)-u(y)}{|x-y|^{n+2s}}dydx.
\end{array}
\end{equation}

Since  $\varphi(x)\in C_0^\infty(\Omega)$ and $u(x)$ is smooth, the above integrals are bounded. Then  letting $\epsilon\rightarrow0$, we arrive at (\ref{3.1}). The proof comes from \cite{li0}. We provide it here for the convenience of readers.
\end{proof}

\subsubsection{The proof of Theorem \ref{lckcor1}}

\begin{proof}
Without loss of generality, we may assume that $c(x)\equiv c_0$. For any domains $\Omega'$ and $\Omega''$ with $\Omega'\subset\subset\Omega$. Define $d=\frac{1}{2}d(\partial\Omega,\bar{\Omega'})>0$, and let $\delta\in(0,d)$. For any $x\in\Omega'$, denote $u^\delta(x)=J_\delta u(x)$, $v^\delta(x)=J_\delta v(x)$, $f^\delta(x)=J_\delta f(x)$, and $g^\delta(x)=J_\delta g(x)$. Then
\begin{equation}\label{lckcor1.2}
\left.
\begin{aligned}
&\displaystyle -\Delta u^\delta(x)+\vec{b}(x)\cdot\nabla u^\delta(x)+c_0u^\delta(x)\leq f^\delta(x)+M_\delta(x)\\
&\displaystyle  -\Delta v^\delta(x)+\vec{b}(x)\cdot\nabla v^\delta(x)+c_0v^\delta(x)\leq g^\delta(x)+N_\delta(x)
\end{aligned}\ \right\}\ {\rm in}\ \Omega',
\end{equation}
where $M_\delta$ and $N_\delta$ converges to $0$ in $L^1_{\rm loc}(\mathbb{R}^n)$ according to Lemma \ref{lcklem3}.

Applying Lemma \ref{lcklem1}, we get
\begin{equation}\label{lckcor1.3}
 -\Delta w^\delta(x)+\vec{b}(x)\cdot\nabla w^\delta(x)+c_0w^\delta(x)
\leq f^\delta(x)\chi_{u^\delta> v^\delta} + g^\delta(x)\chi_{u^\delta\leq v^\delta} +M_\delta(x)+N_\delta(x)\ \ {\rm in}\ \mathcal{D}'(\Omega'),
\end{equation}
where $w^\delta(x)=\max\{u^\delta(x),v^\delta(x)\}$. Choose a subsequence $\delta_k\rightarrow 0$ as $k\rightarrow\infty$. Since  $u^\delta(x)-v^\delta(x)$ converges to $u(x)-v(x)$ in $L^1(\Omega')$, we can apply the Egorov's theorem, $\forall\epsilon>0, \exists A_\epsilon\subset\Omega'\ (\mu(A_\epsilon)<\epsilon)$ and $u^{\delta_k}(x)-v^{\delta_k}(x)$ converges to $u(x)-v(x)$ uniformly on $\Omega' \backslash A_\epsilon$.

Fix $\sigma>0$, we have $1=\chi_{\{u-v>\sigma\}\backslash A_\epsilon}+\chi_{\{u-v<-\sigma\}\backslash A_\epsilon}+\chi_{\{|u-v|\leq\sigma\}\cup A_\epsilon}$. For $\delta$ sufficiently small, we calculate that in $\Omega'$:
\begin{equation}
\begin{array}{rl}
&f^{\delta_k}(x)\chi_{u^{\delta_k}> v^{\delta_k}} + g^{\delta_k}(x)\chi_{u^{\delta_k}\leq v^{\delta_k}}\\
=& (f^{\delta_k}(x)\chi_{u^{\delta_k}> v^{\delta_k}} + g^{\delta_k}(x)\chi_{u^{\delta_k}\leq v^{\delta_k}})(\chi_{\{u-v>\sigma\}\backslash A_\epsilon}+\chi_{\{u-v<-\sigma\}\backslash A_\epsilon}+\chi_{\{|u-v|\leq\sigma\}\cup A_\epsilon})\\
=&f^{\delta_k}(x)\chi_{\{u-v>\sigma\}\backslash A_\epsilon} + g^{\delta_k}(x)\chi_{\{u-v<-\sigma\}\backslash A_\epsilon}\\&+(f^{\delta_k}(x)\chi_{u^{\delta_k}> v^{\delta_k}} + g^{\delta_k}(x)\chi_{u^{\delta_k}\leq v^{\delta_k}})\chi_{\{|u-v|\leq\sigma\}\cup A_\epsilon}\\
\leq& f^{\delta_k}(x)\chi_{\{u-v>\sigma\}\backslash A_\epsilon}\! +\! g^{\delta_k}(x)\chi_{\{u-v<-\sigma\}\backslash A_\epsilon}+\max\{f^{\delta_k}(x),g^{\delta_k}(x)\}\chi_{\{|u-v|\leq\sigma\}\cup A_\epsilon}.
\end{array}
\end{equation}
Substituting this into \eqref{lckcor1.3}, one obtains in the sense of $\mathcal{D}'(\Omega')$:
\begin{equation}
\label{lckcor1.4}
\begin{array}{rl}
& -\Delta w^{\delta_k}(x)+\vec{b}(x)\cdot\nabla w^{\delta_k}(x)+c_0w^{\delta_k}(x)-M_{\delta_k}(x)-N_{\delta_k}(x)\\
\leq& f^{\delta_k}(x)\chi_{\{u-v>\sigma\}\backslash A_\epsilon}\! +\! g^{\delta_k}(x)\chi_{\{u-v<-\sigma\}\backslash A_\epsilon}+\max\{f^{\delta_k}(x),g^{\delta_k}(x)\}\chi_{\{|u-v|\leq\sigma\}\cup A_\epsilon}.
\end{array}
\end{equation}
Observing that as $k\rightarrow\infty$  $M_\delta$ and $N_\delta$ converges to $0$ in $L^1_{\rm loc}(\mathbb{R}^n)$ (Lemma \ref{lcklem3}), one derives:
\begin{align*}
&\left.
\begin{aligned}
&-\Delta w^{\delta_k}(x)+\vec{b}(x)\cdot\nabla w^{\delta_k}(x)
+c_0w^{\delta_k}(x)-(M_{\delta_k}(x)+N_{\delta_k}(x))& &\\
&\rightarrow -\Delta w(x)+\vec{b}(x)\cdot\nabla w(x)+c_0w(x)
\end{aligned}\right.
\ \ &{\rm in}\ \mathcal{D}'(\Omega')&;\\
{\rm and}\ 
&\left.
\begin{aligned}
&f^\delta(x)\chi_{\{u-v>\sigma\}\backslash A_\epsilon}+
g^\delta(x)\chi_{\{u-v<-\sigma\}\backslash A_\epsilon}
 +\max\{f^\delta(x),g^\delta(x)\}\chi_{\{|u-v|\leq\sigma\}\cup A_\epsilon}& &\\
&\rightarrow f(x)\chi_{\{u-v>\sigma\}\backslash A_\epsilon} + g(x)\chi_{\{u-v<-\sigma\}\backslash A_\epsilon}
+\max\{f(x),g(x)\}\chi_{\{|u-v|\leq\sigma\}\cup A_\epsilon}
\end{aligned}
\right.\ \ &{\rm in}\ \mathcal{D}'(\Omega')&.
 \end{align*}
Letting $k\rightarrow\infty$ in \eqref{lckcor1.4} we get
\begin{equation}\label{lckcor1.5}\left.
\begin{aligned}
& -\Delta w(x)+\vec{b}(x)\cdot\nabla w(x)+c_0w(x)\\
&\leq  f(x)\chi_{\{u-v>\sigma\}\backslash A_\epsilon} + g(x)\chi_{\{u-v<-\sigma\}\backslash A_\epsilon}
+\max\{f(x),g(x)\}\chi_{\{|u-v|\leq\sigma\}\cup A_\epsilon}
\end{aligned}\right.\quad {\rm in}\ \mathcal{D}'(\Omega').
\end{equation}

Then, let $\epsilon\rightarrow0^+$ in (\ref{lckcor1.5}):
\begin{equation}\label{lckcor1.6}
\left.
\begin{aligned}
&-\Delta w(x)+\vec{b}(x)\cdot\nabla w(x)+c_0w(x)\\
&\leq f(x)\chi_{\{u-v>\sigma\}} + g(x)\chi_{\{u-v<-\sigma\}}+\max\{f(x),g(x)\}\chi_{\{|u-v|\leq\sigma\}}
\end{aligned}\right.
\quad  {\rm in}\ \mathcal{D}'(\Omega').
\end{equation}

Finally,  let $\sigma\rightarrow0^+$ in (\ref{lckcor1.6}). Since it is clear that $\chi_{\{u-v>\sigma\}}$, $\chi_{\{u-v<-\sigma\}}$, and $\chi_{\{|u-v|\leq\sigma\}}$  converges to $\chi_{u>v}$, $\chi_{u<v}$, and $\chi_{u=v}$ in $L^1(\Omega')$, respectively, we obtain
\begin{equation*}
-\Delta w(x)+\vec{b}(x)\cdot\nabla w(x)+c_0w(x)\leq f(x)\chi_{u>v} + g(x)\chi_{u<v}+\max\{f(x),g(x)\}\chi_{u=v}\ \  {\rm in}\ \mathcal{D}'(\Omega').
\end{equation*}
Notice that $\Omega'$ is an arbitrary pre-compact domain in $\Omega$, we then arrive at the desired result.
\end{proof}

\subsubsection{The proof of Theorem \ref{lckcor2}}
\begin{proof}The proof is similar to the proof of Theorem \ref{lckcor1}.
\end{proof}

{\footnotesize REMARK} \textsc{3}: {\em The special but essential case of Lemma \ref{l 3.1} when $f(x)=g(x)=0$ has been proved in \cite{silvestre} under the assumptions $u(x)$ and $v(x)\in \mathcal{L}_{2s}$  are lower semi-continuous. Here we do not require lower semi-continuity.}

%{\footnotesize REMARK} \textsc{4}: {\em For the equation of Laplacian, the similar result to Lemma \ref{l 3.1} can be proved by using integration by parts. The result for Laplacian operator is well-known and widely used.} \\
\subsection{Some other estimates for the proof of B\^{o}cher type theorem}

\quad\!\!\quad Here are two corollaries that are directly used in the proof of B\^{o}cher type theorem.
\begin{corollary}\label{lckcor3}
Assume that  $u$ is a smooth function satisfying
\begin{equation}
-\Delta u(x)+\vec b(x)\cdot\nabla u(x)+c_0u(x)=f(x)\ \ {\rm in}\ \mathbb{R}^n,
\end{equation}
where $\|\vec b(x)\|_{C^1(\mathbb{R}^n)}\leq c_0$ and $\{x\in\mathbb{R}^n|u(x)>0\}\subset\subset\mathbb{R}^n$.

Letting $w(x)=u^+(x)$, then for the mollified function $w^\delta(x)=J_\delta w(x)$, we have
\begin{equation}\label{lckcor3.0}
-\Delta w^\delta(x)+\vec b(x)\cdot\nabla w^\delta(x)+c_0w^\delta(x)\leq J_\delta(f\chi_{u>0})(x)+c_0\delta\|\nabla u\|_{L^\infty(\{u>0\})}\chi_{u>0}^\delta(x).
\end{equation}
\end{corollary}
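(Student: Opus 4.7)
The plan is to view this corollary as a two-step reduction that combines Lemma \ref{lcklem1} (giving a distributional elliptic inequality for $u^+$) with Lemma \ref{lcklem4} (controlling the commutator between mollification and the drift $\vec b \cdot \nabla$). The smoothing then converts the distributional inequality into a pointwise statement on $w^\delta$, at the price of the commutator term, which is precisely what Lemma \ref{lcklem4} bounds.

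First, I would produce a distributional inequality for $w=u^+$. Since $\{u>0\}\subset\subset\mathbb{R}^n$, I can multiply $u$ by a smooth cutoff $\eta$ that equals $1$ on a neighborhood of $\overline{\{u>0\}}$, so that $\tilde u:=\eta u\in C^2_0(\mathbb{R}^n)$ still satisfies $-\Delta\tilde u+\vec b\cdot\nabla\tilde u+c_0\tilde u=f$ on $\{u>0\}$ and $\tilde u^+=u^+=w$. Apply Lemma \ref{lcklem1} with $v\equiv g\equiv 0$ to $\tilde u$ to obtain
\begin{equation*}
-\Delta w+\vec b\cdot\nabla w+c_0 w\leq f\chi_{u>0}\qquad\text{in }\mathcal{D}'(\mathbb{R}^n).
\end{equation*}

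Next, I would mollify. Because $j^\delta\ge 0$, convolution with $j^\delta$ is order-preserving on distributions, and $J_\delta$ commutes with constant-coefficient differential operators, so
\begin{equation*}
-\Delta w^\delta+J_\delta(\vec b\cdot\nabla w)+c_0 w^\delta\leq J_\delta(f\chi_{u>0})\qquad\text{pointwise in }\mathbb{R}^n.
\end{equation*}
To turn $J_\delta(\vec b\cdot\nabla w)$ into $\vec b\cdot\nabla w^\delta$, I would add and subtract, writing
\begin{equation*}
J_\delta(\vec b\cdot\nabla w)=\vec b\cdot\nabla w^\delta-\bigl(\vec b\cdot\nabla w^\delta-J_\delta(\vec b\cdot\nabla w)\bigr),
\end{equation*}
and invoke Lemma \ref{lcklem4} (whose hypotheses exactly match our setting, with $K=c_0$) to obtain
\begin{equation*}
\bigl|\vec b\cdot\nabla w^\delta-J_\delta(\vec b\cdot\nabla w)\bigr|\leq c_0\delta\,\|\nabla u\|_{L^\infty(\{u>0\})}\chi_{u>0}^\delta(x).
\end{equation*}
Substituting and rearranging gives the inequality \eqref{lckcor3.0}.

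I do not expect a genuine obstacle here; the statement is precisely engineered as the corollary one gets by chaining Lemma \ref{lcklem1} with Lemma \ref{lcklem4} through a mollification. The only point requiring care is that $w=u^+$ is only Lipschitz across $\{u=0\}$, so the differential inequality in the first step has to be interpreted distributionally before mollifying — this is exactly the content that Lemma \ref{lcklem1} supplies, and after mollification $w^\delta$ is smooth so the final inequality makes classical pointwise sense. A minor book-keeping item is that Lemma \ref{lcklem1} is stated for $u\in C^2_0(\Omega)$, which is handled by the cutoff $\eta$ above, justified by $\{u>0\}\subset\subset\mathbb{R}^n$.
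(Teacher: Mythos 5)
Your proof is correct and follows exactly the route the paper intends: the paper states that Corollary \ref{lckcor3} is a direct consequence of Lemma \ref{lcklem1} (applied with $v\equiv g\equiv0$ to get the distributional inequality for $u^+$) and Lemma \ref{lcklem4} (to control the commutator $\vec b\cdot\nabla J_\delta u^+-J_\delta(\vec b\cdot\nabla u^+)$ after mollifying). Your cutoff $\eta$ to fit $u$ into the $C^2_0$ hypothesis of Lemma \ref{lcklem1}, and the add-and-subtract step for the drift term, are the right details to supply.
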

\begin{corollary}\label{lckcor4}

Assume that  $u$ is a smooth function satisfying
\begin{equation}
(-\Delta)^su(x)+\vec b(x)\cdot\nabla u(x)+c_0u(x)=f(x)\ \ {\rm in}\ \mathbb{R}^n,
\end{equation}
where $\|\vec b(x)\|_{C^1(\mathbb{R}^n)}\leq c_0$  and $\Omega=\{x\in\mathbb{R}^n|u(x)>0\}\subset\subset\mathbb{R}^n$.

Letting $w(x)=u^+(x)$, then for the mollified function $w^\delta(x)=J_\delta w(x)$, we have
\begin{equation}\label{lckcor4.0}
(-\Delta)^sw^\delta(x)+\vec b(x)\cdot\nabla w^\delta(x)+c_0w^\delta(x)\leq J_\delta(f\chi_{u>0})(x)+c_0\delta\|\nabla u\|_{L^\infty(\{u>0\})}\chi_{u>0}^\delta(x).
\end{equation}
\end{corollary}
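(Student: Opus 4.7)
The plan is to combine three earlier results: Lemma \ref{lcklem2} (or its distributional extension Theorem \ref{lckcor2}) to handle the $\max$ operation, Lemma \ref{l 3.1} to commute mollification with the fractional Laplacian, and Lemma \ref{lcklem4} to control the commutator between mollification and the drift $\vec b(x)\cdot\nabla$. Since $\{u>0\}\subset\subset\mathbb{R}^n$ and $u$ is smooth, $w=u^+$ is Lipschitz with compact support, in particular $w\in\mathcal{L}_{2s}\cap L^\infty$, so all mollifications and distributional pairings below are well-defined.

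First I would apply Lemma \ref{lcklem2} (or Theorem \ref{lckcor2}) with the pair $(u,0)$ and source pair $(f,0)$, which yields
\begin{equation*}
(-\Delta)^s w(x)+\vec b(x)\cdot\nabla w(x)+c_0 w(x)\leq f(x)\chi_{u>0}(x)\quad\text{in }\mathcal{D}'(\mathbb{R}^n).
\end{equation*}
Next I would mollify this inequality by testing against $j^\delta_x$; since $j^\delta_x\geq 0$, the inequality is preserved, giving
\begin{equation*}
J_\delta\bigl[(-\Delta)^s w\bigr](x)+J_\delta(\vec b\cdot\nabla w)(x)+c_0 w^\delta(x)\leq J_\delta(f\chi_{u>0})(x).
\end{equation*}
By Lemma \ref{l 3.1}, $J_\delta[(-\Delta)^s w](x)=(-\Delta)^s w^\delta(x)$ pointwise on all of $\mathbb{R}^n$ (here $\Omega^\delta=\mathbb{R}^n$), so the first term converts to $(-\Delta)^s w^\delta(x)$.

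The remaining step is to replace $J_\delta(\vec b\cdot\nabla w)(x)$ with $\vec b(x)\cdot\nabla w^\delta(x)$ at the cost of an error term. Writing
\begin{equation*}
\vec b(x)\cdot\nabla w^\delta(x)=J_\delta(\vec b\cdot\nabla w)(x)+\bigl[\vec b(x)\cdot\nabla J_\delta u^+(x)-J_\delta(\vec b\cdot\nabla u^+)(x)\bigr],
\end{equation*}
and invoking Lemma \ref{lcklem4} with $K=c_0$, the bracketed commutator is controlled by
\begin{equation*}
c_0\,\delta\,\|\nabla u\|_{L^\infty(\{u>0\})}\,\chi_{u>0}^\delta(x).
\end{equation*}
Substituting yields exactly \eqref{lckcor4.0}.

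The only subtlety I anticipate is the justification that Lemma \ref{lcklem2} applies in this slightly extended setup: the lemma is phrased for $C^2_0(\Omega)$ data, whereas here $u$ is smooth with $\{u>0\}\subset\subset\mathbb{R}^n$ (so $u^+$ is compactly supported and Lipschitz, but $u$ itself need not be). However, since only the positive part $w=u^+$ enters, and since $w$ inherits compact support from $\{u>0\}\subset\subset\mathbb{R}^n$, the cutting argument in Lemma \ref{lcklem2} (Sard's theorem plus the boundary integration) goes through verbatim; the distributional sign of $(-\Delta)^s w$ on the complement $\{u\le 0\}$ is handled by the non-positivity observation in estimate (\ref{3.2}) of that proof. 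Everything else—pairing against $j^\delta_x\ge 0$, commuting $J_\delta$ with $(-\Delta)^s$, and the pointwise commutator bound—is already packaged in Lemmas \ref{l 3.1} and \ref{lcklem4}, so no further technical work is needed.
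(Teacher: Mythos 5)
Your proof is correct and follows exactly the route the paper intends: the paper's entire justification is the single sentence that Corollaries \ref{lckcor3}--\ref{lckcor4} are ``direct corollaries of Lemma \ref{lcklem1}, Lemma \ref{lcklem2} and Lemma \ref{lcklem4}'', and your expansion (apply Lemma \ref{lcklem2} to the pair $(u,0)$, mollify and commute $J_\delta$ with $(-\Delta)^s$ via Lemma \ref{l 3.1}, then absorb the drift commutator with Lemma \ref{lcklem4}) is precisely that argument made explicit, and the subtlety you flag about $u$ being merely smooth rather than $C^2_0$ is handled correctly since only $w=u^+$ and the precompactness of $\{u>0\}$ enter estimate (\ref{3.2}). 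One small caveat: the parenthetical alternative ``or its distributional extension Theorem \ref{lckcor2}'' does not quite work, because Theorem \ref{lckcor2} applied to $(u,0,f,0)$ would give the weaker right-hand side $f\chi_{u>0}+\max\{f,0\}\chi_{u=0}$, which can exceed $f\chi_{u>0}$ on a positive-measure set $\{u=0\}$; Lemma \ref{lcklem2}, which produces $f\chi_{u>0}+g\chi_{u\le v}$ rather than splitting off the set $\{u=v\}$, is the right lemma here.
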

These two statements are direct corollaries of Lemma \ref{lcklem1}, Lemma \ref{lcklem2} and Lemma \ref{lcklem4}.

\end{document}